%% file: main.tex
\documentclass[12pt]{amsart}

\input{Paper_Style.sty}

\usepackage{tikz}
\usepackage{booktabs}

\input{1_Alphabet_Commands.tex}

\input{2_Math_Commands.tex}

\newcommand{\dist}[1]{d_{#1}}
\newcommand{\shift}{\tau}
\newcommand{\alphabet}{\Sigma}
\DeclareMathOperator{\edit}{\mathsf{edit}}
\DeclareMathOperator{\stutter}{\mathsf{stut}}
\newcommand{\ngram}[2]{[{#1}]_{#2}}
\newcommand{\multiplicity}[2]{c(#1, #2)}

\begin{document}
\title{A weighted angle distance on strings}
\author{Grant Molnar}\email{gmolnar@arka.org}\thanks{This work was carried out at ARKA. Patent pending.}

\begin{abstract}
	We define a multi-scale metric $d_\rho$ on strings by aggregating angle distances between all $n$-gram count vectors with exponential weights $\rho^n$. We benchmark $d_\rho$ in DBSCAN clustering against edit and $n$-gram baselines, give a linear-time suffix-tree algorithm for evaluation, prove metric and stability properties (including robustness under tandem-repeat stutters), and characterize isometries.
\end{abstract}

\maketitle

\section{Introduction}
\label{sec:intro}

This paper was motivated by a simple question: can we define a natural metric on strings for which
\[
(ab)^4 = abababab
\quad\text{and}\quad
(ab)^{20} = abababababababababababababababababababab
\]
are close? 

In many symbolic-sequence settings---including text processing, computational biology,
and log or trace analysis---repeated use of a short motif is often semantically mild:
the string ``stutters'' without changing its essential type. Classical edit distances (e.g.\ Levenshtein or Damerau--Levenshtein) treat this as a large change, since inserting additional tandem copies requires many operations, and the distance grows linearly with the number of repeated blocks.
When repetition scale is a nuisance factor rather than a signal, this behavior is undesirable.

Rather than introducing new edit operations, we take a feature-based, multi-scale approach. Fix a finite alphabet $\alphabet$.
For each string $S\in\alphabet^*$ and each length $n\ge 1$ we form the $n$-gram count vector $\ngram{S}{n}\in\bbR^{\alphabet^n}$. We compare these vectors using the angle distance~$\theta$ (the arccosine of cosine similarity), and we aggregate the discrepancy across all $n$ using an exponentially decaying weight~$\rho^n$:
\begin{equation}\label{eq:intro:dist-rho}
    \dist{\rho}(S,T)\coloneqq \sum_{n\ge 1} \rho^n\,\theta_n(S,T),
    \qquad
    \theta_n(S,T)\coloneqq \theta(\ngram{S}{n},\ngram{T}{n}).
\end{equation}
Intuitively, $\theta_n(S,T)$ measures how similar the local $n$-gram statistics of $S$ and~$T$ are,
and the parameter $\rho$ controls how strongly we discount longer-range interactions.
Small $\rho$ emphasizes short grams (local structure), while larger $\rho$ assigns more weight to
longer grams (global structure).
Although \eqref{eq:intro:dist-rho} is defined for all $\rho>0$ (only finitely many terms are nonzero),
when $0<\rho<1$ the series viewpoint is most transparent and yields uniform bounds such as
\[
    \dist{\rho}(S,T) \le \frac{\pi}{2}\cdot\frac{\rho}{1-\rho}
    \qquad
    \text{for all } S,T\in\alphabet^*.
\]
In particular, $\dist{\rho}$ is insensitive to length at large scales: it is designed to compare
pattern profiles rather than raw counts.

\subsection*{Contributions}

We now summarize the main results of the paper.

\begin{itemize}
    \item \textbf{A multi-scale cosine geometry on strings.}
    For every $\rho>0$ we define the weighted angle distance $\dist{\rho}$ on $\alphabet^*$
    and show that it is a metric (\Cref{Proposition: dist rho is a metric again}).
    The construction is alphabet-invariant: any permutation of~$\alphabet$ induces an isometry of
    $(\alphabet^*,\dist{\rho})$.

    \item \textbf{Linear-time computation.}
    We give a generalized suffix-tree (or suffix-array) algorithm that computes $\dist{\rho}(S,T)$
    in time $O(\abs{S}+\abs{T})$
    (\Cref{Proposition: computation correctness and complexity}), by reducing the necessary
    dot products and norms across all $n$ as aggregates over internal nodes of the generalized
    suffix tree.

    \item \textbf{Empirical evaluation.}
    In \Cref{Section: Experiments} we evaluate $\dist{\rho}$ inside a fixed DBSCAN clustering
    pipeline on three labeled sequence benchmarks, comparing against edit distances and
    fixed-scale $k$-gram baselines.
    The results suggest that weighted angle distances can be particularly effective on datasets
    whose class structure is driven by tandem-repeat statistics, while remaining competitive with
    classical baselines on more conventional corpora.

    \item \textbf{Quantitative control of local edits and repetition.}
    We prove explicit Lipschitz-type bounds for elementary edits
    (\Cref{thm-insertions-deletions-substitutions}) and for stutter operations
    (\Cref{Theorem: stutter}), formalizing the idea that repeating a block in place
    produces only a controlled perturbation in $\dist{\rho}$.

    \item \textbf{Topological and geometric structure.}
    We show that $\dist{\rho}$ induces the discrete topology on $\alphabet^*$, but that when
    $0<\rho<1$ the metric completion is nontrivial.
    We identify this completion with the disjoint union of finite strings and shift-invariant
    Borel probability measures on one-sided infinite strings, equipped with a natural extension
    of~$\dist{\rho}$ (\Cref{Theorem: Completion of alphabet^*}).
    This connects $\dist{\rho}$ to the geometry of empirical and limiting $n$-gram distributions.

    \item \textbf{Rigidity of symmetries.}
    We completely characterize the isometry group of $(\alphabet^*,\dist{\rho})$:
    every isometry is either a symbol permutation or the composition of a symbol permutation with
    string reversal (\Cref{Theorem: isometries of dist rho}).
\end{itemize}

\subsection*{Organization}

In \Cref{Section: Preliminaries and Definitions} we fix notation and introduce $n$-gram count vectors
and the angle distance on nonnegative vectors.
\Cref{Section: Related Work} places the construction in context with standard edit and $n$-gram
distances.
\Cref{Section: Computation} gives the linear-time computation via generalized suffix trees.
\Cref{Section: Experiments} reports clustering experiments and discusses the dependence on~$\rho$.
Finally, \Cref{Section: Conclusion} summarizes takeaways and directions for future work.

\section{Preliminaries and Definitions}\label{Section: Preliminaries and Definitions}

We begin by recalling some standard notation from formal language theory.

Throughout, we let $\alphabet$ denote a fixed alphabet with finitely many characters, we write $\alphabet^n$ for the length $n$ strings over the alphabet $\alphabet$, we write $\alphabet^{\leq N} \coloneqq \bigsqcup_{n \leq N} \alphabet^n$ for the set of strings of length at most $N$ over $\alphabet$, and we write $\alphabet^* \coloneqq \bigsqcup_{n \geq 0} \alphabet^n$ for the set of all strings over $\alphabet$. If $S \in \alphabet^m$, we write $\abs{S} = m$. For $S_1, S_2 \in \alphabet^*$, we write $S_1 S_2$ for the concatenation of $S_1$ and $S_2$. The map $\abs{\cdot} : \alphabet^* \to \bbZ_{\geq 0}$ is a morphism of monoids, so $\abs{S_1 S_2} = \abs{S_1} + \abs{S_2}$. We write $\epsilon$ for the empty string, so for any $S \in \alphabet^*$ we have $\epsilon S = S \epsilon = S$. 

\begin{definition}\label{Definition: substrings}
    We say $Q$ is a \defi{substring} of $S$ if $S = P_1 Q P_2$ for some $P_1, P_2 \in \alphabet^*$. We say $Q$ has \defi{multiplicity $\multiplicity S Q$ in $S$} if there are exactly $\multiplicity S Q$ pairs $(P_1, P_2)$ such that $S = P_1 Q P_2$. If $Q$ is a substring of $S$ and $\abs Q = n$, we say that $Q$ is an \defi{$n$-gram} of $S$.
\end{definition}

\begin{remark}
    In formal language theory, a string is commonly referred to as a ``word''; in genomics, a $k$-gram is typically referred to as a ``$k$-mer''. We opt to retain the phraseology more common in computer science and machine learning.
\end{remark}

We write $\bbN = \set{1, 2, \dots}$ for the set of positive integers. We identify $\bbR^{\alphabet^n}$ with the space of real-valued functions on $\alphabet^n$, so that a vector $v \in \bbR^{\alphabet^n}$ is written as $v : X \mapsto v(X)$.

\begin{definition}
    For $S \in \alphabet^*$ and $n \in \bbN$, we define $\ngram S n \in \bbR^{\alphabet^n}$ as follows:
    \[
        \ngram S n : X \mapsto \multiplicity S X. 
    \]
\end{definition}

Thus, $\ngram S n$ counts all length $n$ substrings of $S$.

\begin{example}
    Let $\alphabet = \set{a, b}$ and let $S = abab$. Then $\ngram S 2 = \set{aa : 0, \ ab : 2, \ ba : 1, \ bb : 0}$. Omitting 0 entries, we write $\ngram S 2 = \set{ab : 2, \ ba : 1}$.
\end{example}

Recall that for $v \in \bbR^N$ a vector, we write $\norm{v} = \sqrt{v \cdot v}$ for the Euclidean norm of $v$.

\begin{definition}\label{Definition: theta(u, v)}
    For any integer $N \geq 0$, we define the \defi{angle distance} $\theta : \bbR_{\geq 0}^N \times \bbR_{\geq 0}^N \to [0, \pi/2]$ by
    \[
    \theta(u, v) \coloneqq \begin{cases}
        \arccos\parent{\frac{u \cdot v}{\norm{u} \norm{v}}} & \ \text{if} \ u, v \neq 0, \\
        0 & \ \text{if} \ u = v = 0, \\
        \frac{\pi}{2} & \ \text{else}.
    \end{cases}
    \]
\end{definition}

The map $\theta$ is a pseudometric on its domain, and restricts to a metric on the unit sphere in $\bbR_{\geq 0}^M$.

We are now ready to define our main object of study.

\begin{definition}\label{Definition: weighted angle distance}
    Let $\alphabet$ be a finite alphabet, and let $\rho > 0$ be arbitrary. We define the \defi{$\rho$-weighted angle distance} $\dist \rho : \alphabet^* \times \alphabet^* \to \bbR_{\geq 0}$ as
    \[
    \dist \rho : (S, T) \mapsto \sum_{n \geq 1} \rho^n \theta_n(S, T),
    \]
    where
    \[
    \theta_n(S, T) \coloneqq \theta(\ngram S n, \ngram T n).
    \]
\end{definition}

Note that $\theta(\ngram S n, \ngram T n) = 0$ whenever $n > \max\parent{\abs{S}, \abs{T}}$, so $\dist \rho$ is well-defined even when $\rho \geq 1$. 

In essence, for each scale $n$, we embed a string $S$ as its $n$-gram count vector $\ngram S n$, and compare two strings by the angle between these vectors (so we compare patterns, not lengths). Then $\dist \rho (S, T)$ is given by summing these angular discrepancies across all scales, with an exponential weight $\rho^n$ that lets you tune how much you care about longer vs. shorter structure.

\begin{example}
Let $\alphabet=\{a,b\}$, take $S=ab$ and $T=ba$, and fix $\rho>0$.
At scale $n=1$, both strings have the same unigram counts, so
\[
\ngram S 1 = \{a:1,b:1\}=\ngram T 1
\]
and thus $\theta_1(S,T)=0$. At scale $n=2$, the bigram vectors are supported on different coordinates,
\[
\ngram S 2=\{ab:1\},\qquad \ngram T 2 =\{ba:1\},
\]
so $\ngram S 2$ and $\ngram T 2$ are orthogonal, and $\theta_2(S,T)=\pi/2$.  All higher $n$ contribute $0$, hence
\[
\dist \rho(S,T)=\rho^2\cdot\frac{\pi}{2}.
\]
\end{example}

\begin{prop}\label{Proposition: dist rho is a metric}
    For any $\rho > 0$, the map $\dist \rho : \alphabet^* \times \alphabet^* \to \bbR_{\geq 0}$ is a metric.
\end{prop}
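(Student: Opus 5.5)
Since $\dist\rho$ is a finite nonnegative weighted sum of the pseudometrics $\theta_n$ (only the terms with $n \le \max(\abs S, \abs T)$ can be nonzero), the plan is to get nonnegativity, symmetry and the triangle inequality termwise, and to prove separation directly. Nonnegativity and symmetry are immediate, because $\theta$ is symmetric with values in $[0,\pi/2]$, and $\dist\rho(S,S) = 0$ because $\theta(u,u) = 0$ for every $u$.

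For the triangle inequality, I will first confirm that $\theta$ is a pseudometric on $\bbR^N_{\geq 0}$, including the degenerate cases of the definition. On nonzero vectors, $\theta(u,v)$ is the spherical distance between $u/\norm u$ and $v/\norm v$, and that distance satisfies the triangle inequality. The cases involving the zero vector remain:
\begin{itemize}
\item If exactly one of $u, w$ is zero, then $\theta(u,w) = \pi/2$. Whatever the middle vector $v$ is, one of $\theta(u,v)$ or $\theta(v,w)$ equals $\pi/2$, because exactly one of $u,w$ differs in zero-ness from $v$.
\item If $u = w = 0$, then $\theta(u,w) = 0$ and there is nothing to check.
\item If $u, w \neq 0$ but $v = 0$, the right-hand side is $\pi$, which is at least $\theta(u,w)$.
\end{itemize}
Applying this to $u = \ngram S n$, $v = \ngram T n$, $w = \ngram U n$, multiplying by $\rho^n > 0$ and summing over $n$ gives $\dist\rho(S,U) \le \dist\rho(S,T) + \dist\rho(T,U)$.

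The main step is separation: if $\dist\rho(S,T) = 0$ then $S = T$. Since $\rho > 0$, the hypothesis forces $\theta_n(S,T) = 0$ for all $n$. For each $n$ this means either both count vectors vanish, or both are nonzero and positively proportional, i.e.\ $\ngram S n = \lambda_n \ngram T n$ with $\lambda_n > 0$.

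Lengths come first. The vector $\ngram S n$ is nonzero exactly when $n \le \abs S$. If $\abs S < \abs T$, take $n = \abs T$: then $\ngram S n = 0$ while $\ngram T n \ne 0$, so $\theta_n(S,T) = \pi/2$, a contradiction. Hence $\abs S = \abs T = m$. If $m = 0$, both strings are $\epsilon$ and we are done.

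Otherwise take $n = m$. Both $\ngram S m$ and $\ngram T m$ are indicator vectors of a single $m$-gram, namely $S$ and $T$ respectively. Two such vectors are positively proportional only if their supports agree, so $S = T$.

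This top-scale argument avoids any need to reconstruct a string from its lower-order $n$-gram statistics, which is where I expected the difficulty to lie. The only real care is the bookkeeping of the zero-vector conventions in $\theta$.
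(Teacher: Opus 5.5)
Your proof is correct and follows essentially the same route as the paper: symmetry and the triangle inequality come termwise from the pseudometric $\theta$, and separation comes from forcing $|S| = |T| = m$ and then comparing the single-coordinate vectors at the top scale $n = m$. The only difference is that you explicitly check the zero-vector cases of the triangle inequality for $\theta$ (and the trivial case $m = 0$), which the paper leaves implicit.
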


See \Cref{Proposition: dist rho is a metric again} below for a proof of this fact.

The weighted angle distance $\dist \rho$ is built from a standard representation of strings: for each $n\ge1$, a string $S\in\alphabet^*$ determines an $n$-gram count vector $\ngram{S}{n}\in\bbR^{\alphabet^n}$ encoding its pattern frequencies at scale $n$.

Using the angle (cosine) distance between these vectors is natural because it compares profiles rather than magnitudes, so it is comparatively insensitive to length and to tandem repetition (stutter) while still detecting changes in local structure.

Summing these angular discrepancies over all $n$ yields a genuinely multi-scale notion of similarity, with the weight $\rho^n$ providing a transparent control of how strongly longer-range patterns are emphasized. Moreover, the construction is alphabet-agnostic and admits a linear-time evaluation via generalized suffix trees (or suffix arrays), making it a computationally tractable alternative to edit-based distances when repeated motifs are the dominant signal.

\section{Related Work}\label{Section: Related Work}

The study of distances on strings has a long history spanning theoretical computer science,
information retrieval, and computational biology.  Our $\rho$-weighted angle distance sits at the
intersection of two major themes: $n$-gram feature-space methods and string kernels, and alignment-free approaches for biological sequences.

We begin this section, however, with  the standard approach to string distances.

\subsection{Edit-based distances and stutter resistance}

The most classical and widely used metrics on strings are edit-based.  Hamming distance measures
the number of substitutions between equal-length strings \cite{Hamming1950}.  The Levenshtein
(edit) distance extends this to insertions and deletions, and can be computed in quadratic time
via the dynamic programming algorithm of Wagner and Fischer
\cite{Levenshtein1965,WagnerFischer1974}.  The Damerau--Levenshtein distance further allows
transpositions of adjacent characters, capturing common typo patterns
\cite{Damerau1964}.  These metrics are fundamental in coding theory, pattern matching, spell
correction, and sequence alignment, precisely because they track local, position-sensitive edits.

However, edit distances can behave poorly when the main notion of similarity is repetition
scale rather than literal edit alignment.  For example, $(ab)^4$ and $(ab)^{20}$ are far apart in
Levenshtein distance despite having essentially the same local $n$-gram structure repeated at
different scales.  In such settings one would like a metric that treats stutters
(repetitive block expansions) as relatively small perturbations.

Recent work by Petty et al.\ proposes a new edit-based distance designed to address some of these
weaknesses, with a particular focus on capturing repetition-robust similarity while preserving an
operational ``editing'' interpretation \cite{Petty2022}.  Their construction augments the edit
operations with a family of stutter-like transformations, but requires one to commit in advance to
a menu of permissible stutters and to perform substantial precomputation to support them.

Our approach is complementary: rather than enriching the suite of permissible edits, we adopt a
multi-scale, compositional viewpoint based on weighted angular comparisons of $n$-gram count
vectors.  This shifts the inductive bias from ``minimal edits'' to ``similar local statistics
across scales,'' which naturally compresses length inflation caused by repeated blocks.

\subsection{\texorpdfstring{$n$-gram}{n-gram} feature spaces, cosine geometry, and string kernels}

A large body of work represents strings by their substring statistics and compares the resulting
vectors in a feature space.  Character $n$-gram profiles have long been successful in tasks such
as language identification and text categorization, especially for short or noisy documents
\cite{CavnarTrenkle1994}.  In classical information retrieval, the vector space model represents
documents and queries as weighted term vectors and uses cosine similarity for comparison
\cite{SaltonWongYang1975,ManningRaghavanSchutze2008}.  This paradigm underlies many modern
practices, including readily available string-similarity toolkits that expose cosine distance on
$n$-gram count vectors as a basic primitive \cite{strsimpy}.

In theoretical and applied work on string kernels, this perspective is formalized in a
reproducing-kernel-Hilbert-space framework.  The spectrum kernel is essentially the inner product
on $k$-gram count vectors, while mismatch-style kernels allow controlled perturbations of
substrings; both families have been highly influential in text and biological sequence
classification \cite{Lodhi2002,Leslie2002Spectrum,Leslie2004Mismatch,Sonnenburg2007}.  From this
viewpoint, one typically fixes a scale $k$ and measures similarity in the associated feature
space.  Related ideas appear in approximate string matching and filtering using $q$-grams
\cite{Ukkonen1992}.

Our construction differs structurally from most fixed-$k$ approaches.  The distance $\dist{\rho}$
incorporates \emph{all} $n$-gram scales via an exponentially decaying weight $\rho^n$ and uses the
angle between raw count vectors at each scale.  This produces a bona fide metric on $\alphabet^*$
(for any $\rho>0$, and uniformly bounded when $\rho\in(0,1)$), while retaining the interpretability
and conditioning benefits associated with cosine geometry.

\subsection{Alignment-free sequence comparison and sketching}

In genomics and metagenomics, alignment-free methods compare biological sequences using $k$-mer
count or frequency vectors, together with various geometric or information-theoretic dissimilarities
\cite{OtuSayood2003,Zielezinski2019}.  Such methods are particularly attractive when alignment is
computationally expensive or conceptually inappropriate, and they often rely on $k$-mer statistics
that are closely analogous to our $n$-gram counts.

Scaling these ideas to very large datasets has led to sketch-based techniques.  For example, Mash
uses MinHash sketches of $k$-mer sets to approximate distances between genomes and metagenomes
extremely quickly, while still maintaining good empirical accuracy \cite{Ondov2016}.  These
approaches demonstrate that compositional statistics can serve as robust fingerprints when explicit
alignment is infeasible or undesirable.

Our metric shares with alignment-free methods the use of substring statistics as primary features,
but differs in several respects: it aggregates all scales $n$ with a principled weighting,
is explicitly cosine-derived at each scale, and is provably a metric on the space of finite
strings.  The suffix-tree-based algorithm we describe in \Cref{Section: Computation} ensures that
this multi-scale construction remains computationally tractable in practice.

\subsection{Compression-based distances}

Another strand of work compares strings via compression, using compressor behavior as a proxy for
shared structure.  The normalized compression distance (NCD) of Cilibrasi and Vit{\'a}nyi
\cite{CilibrasiVitanyi2005} is a prominent example: it defines a universal, task-agnostic
similarity measure derived from the Kolmogorov complexity framework, instantiated with real-world
compressors.  NCD and related methods can capture very rich regularities without hand-crafted
features, but are often sensitive to the choice of compressor and can be less transparent
analytically.

By contrast, our approach affords explicit combinatorial and geometric control over substring
statistics: the metric is built from concrete $n$-gram counts and the geometry of angles in
$\bbR^{\alphabet^n}$, and admits direct analysis of its stability under edits and stutters.

\subsection{Summary}

In summary, $\dist{\rho}$ may be viewed as a metric-level synthesis of ideas that have appeared
independently across $n$-gram feature comparisons
\cite{CavnarTrenkle1994,SaltonWongYang1975,ManningRaghavanSchutze2008},
string kernels \cite{Lodhi2002,Leslie2002Spectrum,Leslie2004Mismatch,Sonnenburg2007},
and alignment-free sequence analysis
\cite{OtuSayood2003,Ondov2016,Zielezinski2019}, while remaining conceptually distinct from both
classical edit distances \cite{Hamming1950,Levenshtein1965,WagnerFischer1974,Damerau1964,Petty2022}
and compression-based similarities \cite{CilibrasiVitanyi2005}.  To the best of our knowledge, none
of the existing constructions simultaneously offers (i) robustness to stutter-like repetition,
(ii) multi-scale $n$-gram sensitivity, and (iii) provable metric structure together with a
linear-time algorithm for exact evaluation on finite strings.

\section{Computation}\label{Section: Computation}

In this section, we describe an efficient method for computing $\dist \rho(S,T)$ for two strings
$S,T\in\alphabet^*$. Throughout, we write $m\coloneqq \abs{S}$, $n\coloneqq \abs{T}$, and $L_{\max} \coloneqq \max(m,n)$.

\subsection{Reducing \texorpdfstring{$\dist \rho$}{d\_rho} to aggregated \texorpdfstring{$n$-gram}{n-gram} statistics}

Recall that
\[
    \dist \rho(S,T)=\sum_{n\ge 1}\rho^n\theta_n(S,T),
\]
where $\theta_n(S,T)=\theta(\ngram S n,\ngram T n)$. For each $n\ge 1$, define the aggregated quantities
\begin{align}
    A_n(S) &\coloneqq \norm{\ngram S n}^2
    = \sum_{W\in\alphabet^n} \multiplicity S W^2,  \label{Equation: An}\\
    A_n(T) &\coloneqq \norm{\ngram T n}^2
    = \sum_{W\in\alphabet^n} \multiplicity T W^2, \nonumber\\
    B_n(S,T) &\coloneqq \ngram S n\cdot \ngram T n
    = \sum_{W\in\alphabet^n} \multiplicity S W \multiplicity T W. \label{Equation: Bn}
\end{align}
Then for $n\le L_{\max} $ we have
\begin{equation}\label{Equation: theta via A and B}
    \theta_n(S,T)=
    \begin{cases}
        \arccos\parent{\dfrac{B_n(S,T)}{\sqrt{A_n(S)A_n(T)}}},
        & A_n(S)A_n(T)\neq 0,\\[10pt]
        0, & A_n(S)=A_n(T)=0,\\[4pt]
        \dfrac{\pi}{2}, & \text{otherwise}.
    \end{cases}
\end{equation}
Moreover, $\theta_n(S,T)=0$ for $n>L_{\max}$. Thus computing $\dist \rho(S,T)$ reduces to computing the sequences $\bigl(A_n(S)\bigr)_{1\le n\le L_{\max} }$, $\bigl(A_n(T)\bigr)_{1\le n\le L_{\max} }$, and $\bigl(B_n(S,T)\bigr)_{1\le n\le L_{\max} }$.

\subsection{Generalized suffix trees}

We now explain how to compute the quantities in
\eqref{Equation: An}--\eqref{Equation: Bn} in time linear in $m+n$,
using a generalized suffix tree (or, equivalently, a suffix array with LCP-interval processing).

Let $\#,\$ \notin \alphabet$ be two fresh terminal symbols,
and set
\[
    U \coloneqq S\#T\$.
\]
A \defi{generalized suffix tree} for $S$ and $T$ is a suffix tree for $U$ in which
each leaf is labeled according to whether its suffix begins in the $S$-block or the $T$-block.
We write $\mathrm{depth}(v)$ for the string depth of a node $v$, i.e.\ the length of
the path label from the root to $v$.

For a node $v$, define
\begin{align*}
    \mathrm{occ}_S(v) &\coloneqq \#\{\text{leaves in the subtree of $v$ whose suffix begins in $S$}\},\\
    \mathrm{occ}_T(v) &\coloneqq \#\{\text{leaves in the subtree of $v$ whose suffix begins in $T$}\}.
\end{align*}
By standard suffix-tree properties, if a string $W\in\alphabet^+$ labels the path to a point on the edge into
$v$ (equivalently, $W$ contains neither $\#$ nor $\$$), then $\multiplicity{S}{W}=\mathrm{occ}_S(v)$ and $\multiplicity{T}{W}=\mathrm{occ}_T(v)$. In other words, along a single edge, all sentinel-free substrings represented by points on that edge have the same occurrence counts in $S$ and in $T$.

This observation yields a compact formula for the aggregated $n$-gram statistics.

\begin{prop}\label{Proposition: A_n and B_n from GST}
Let $S,T\in\alphabet^*$, let $U=S\#T\$$, and let $\mathcal T$ be the generalized suffix tree of $U$.
For each non-root node $v$ of $\mathcal T$, let $\ell_v^-\coloneqq \mathrm{depth}(\mathrm{parent}(v))+1$.

Let $\ell_v^+$ denote the largest integer $d\le \mathrm{depth}(v)$ such that the length-$d$ prefix of the
path label to $v$ lies in $\alphabet^d$ (i.e.\ contains neither $\#$ nor $\$$).
(If the path label to $v$ contains a sentinel, then $\ell_v^+<\mathrm{depth}(v)$.)

Then for every integer $n\ge 1$,
\begin{align*}
A_n(S) &= \sum_{v\neq \mathrm{root}}
\mathbf{1}_{\ell_v^- \le n\le \ell_v^+}\, \mathrm{occ}_S(v)^2,\\
A_n(T) &= \sum_{v\neq \mathrm{root}}
\mathbf{1}_{\ell_v^- \le n\le \ell_v^+}\, \mathrm{occ}_T(v)^2,\\
B_n(S,T) &= \sum_{v\neq \mathrm{root}}
\mathbf{1}_{\ell_v^- \le n\le \ell_v^+}\, \mathrm{occ}_S(v)\mathrm{occ}_T(v).
\end{align*}
Nodes with $\ell_v^+<\ell_v^-$ contribute nothing.
\end{prop}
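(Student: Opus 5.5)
The plan is to set up a bijection between the $n$-grams $W\in\alphabet^n$ that occur in $S$ or $T$ and the non-root nodes $v$ with $\ell_v^-\le n\le \ell_v^+$. Then each sum over $W$ in \eqref{Equation: An}--\eqref{Equation: Bn} can be rewritten as a sum over nodes.

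First, I discard the $W$ that contribute nothing. If $W\in\alphabet^n$ occurs in neither $S$ nor $T$, then $\multiplicity S W=\multiplicity T W=0$, so $W$ adds zero to $A_n(S)$, $A_n(T)$ and $B_n(S,T)$. It therefore suffices to sum over the $W\in\alphabet^n$ that occur in $S$ or in $T$.

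Next, I locate each such $W$ in $\mathcal T$. Since $\#,\$\notin\alphabet$, an occurrence of $W$ in $S$ or in $T$ is also an occurrence of $W$ in $U$. So $W$ is a prefix of some suffix of $U$, and it spells out a unique point on the tree. Let that point lie on the edge into the non-root node $v$, with the convention that a point exactly at a node counts as lying on the edge into that node. Then
\[
\mathrm{depth}(\mathrm{parent}(v))<n\le \mathrm{depth}(v), \qquad \text{so } n\ge \ell_v^-.
\]
The length-$n$ prefix of the path label to $v$ is $W$, which contains no sentinel. Sentinel-freeness is inherited by shorter prefixes, so the set of $d$ for which the length-$d$ prefix is sentinel-free is downward closed. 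Hence $n\le \ell_v^+$.

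Conversely, let $v$ be a non-root node with $\ell_v^-\le n\le \ell_v^+$. The length-$n$ prefix $W$ of its path label then lies on the edge into $v$ and belongs to $\alphabet^n$. Because $W$ is a substring of $U=S\#T\$$ that avoids both sentinels, it lies entirely inside the $S$-block or the $T$-block, so it occurs in $S$ or in $T$. Distinct nodes give distinct points, and each length $n$ picks out at most one point per edge. So $W\mapsto v$ is a bijection between the $n$-grams of $S$ or $T$ and the nodes satisfying the indicator condition.

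Finally, I substitute counts and reindex. By the standard suffix-tree fact recalled before the proposition, $\multiplicity S W=\mathrm{occ}_S(v)$ and $\multiplicity T W=\mathrm{occ}_T(v)$. Substituting these into \eqref{Equation: An}--\eqref{Equation: Bn} and reindexing via the bijection gives the three formulas. The remark that nodes with $\ell_v^+<\ell_v^-$ contribute nothing is immediate, since the indicator is then identically zero.

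The main obstacle is the occurrence-count identity $\multiplicity S W=\mathrm{occ}_S(v)$, and I would at least sketch why it holds. Occurrences of $W$ in $S$ at position $i$ correspond exactly to suffixes of $U$ beginning at $i$ within the $S$-block that have $W$ as a prefix. These are precisely the $S$-labelled leaves below the point $W$, which are the leaves in the subtree of $v$. One must also check that a match cannot begin in $S$ and run across $\#$; this is impossible because $W$ contains no sentinel. Leaves whose suffix starts at a sentinel itself must not be counted under either label; they are excluded from $\mathrm{occ}_S$ and $\mathrm{occ}_T$ by definition.
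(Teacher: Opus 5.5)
Your proposal is correct and follows the paper's route. Both arguments match the distinct sentinel-free length-$n$ substrings with the depth-$n$ points on edges into nodes $v$ with $\ell_v^-\le n\le\ell_v^+$, then use $\multiplicity S W=\mathrm{occ}_S(v)$ and $\multiplicity T W=\mathrm{occ}_T(v)$ to reindex the sums. You also make explicit several steps the paper leaves implicit: that sentinel-free substrings of $U$ lie inside $S$ or $T$, the converse direction of the correspondence, and a sketch of the occurrence-count identity.
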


\begin{proof}
    The distinct substrings of $U$ are in bijection with the integer depths of the non-root points
    on edges of the suffix tree. For a fixed non-root node $v$, the edge from $\mathrm{parent}(v)$ to $v$ contributes exactly one distinct substring of length $n$ for each integer $n$ with $\ell_v^- \le n\le \ell_v^+$, namely the prefix of the edge label of length $n-\mathrm{depth}(\mathrm{parent}(v))$ appended to the path label of $\mathrm{parent}(v)$. Every such substring has multiplicity $\mathrm{occ}_S(v)$ in $S$ and $\mathrm{occ}_T(v)$ in $T$. Summing $\multiplicity S W^2$, $\multiplicity T W^2$, and $\multiplicity S W\,\multiplicity T W$ over all distinct $W\in\alphabet^n$ gives the stated formulas.
\end{proof}

\subsection{A linear-time algorithm}

\Cref{Proposition: A_n and B_n from GST} suggests a simple range-add implementation.

We are ultimately interested only in substrings over $\alphabet$. Accordingly, when we speak of a node $v$ and the substrings it represents, we implicitly restrict our attention to those nodes for which every label on the root-$v$ path lies in $\alphabet$ (i.e. no symbol $\#$ or $\$$ appears). Nodes whose path label crosses a sentinel will be ignored in the sums below.

If either $S$ or $T$ is empty, then $\dist \rho (S, T)$ may be computed directly from $\abs{S}$ and $\abs{T}$ using \Cref{Lemma: We can recover |S| from dist rho} below. Throughout the remainder of this section, we assume $S$ and $T$ are both nonempty.

\begin{alg}
\begin{enumerate}
    \item Build the generalized suffix tree $\mathcal T$ of $U=S\#T\$$.
    \item Compute $\mathrm{occ}_S(v)$ and $\mathrm{occ}_T(v)$ for all nodes $v$ by a single post-order traversal: at a leaf, set $(\mathrm{occ}_S,\mathrm{occ}_T)=(1,0)$ or $(0,1)$ depending on its origin, and at an internal node sum the counts of its children.
    \item Initialize three difference arrays of length $L_{\max} +2$:
          \[
              D_S,\ D_T,\ D_{ST}\in\bbR^{L_{\max} +2}.
          \]
          For each non-root node $v$ whose path label lies in $\alphabet^*$, compute
          \[
              a_v\coloneqq \mathrm{occ}_S(v)^2,\qquad
              b_v\coloneqq \mathrm{occ}_T(v)^2,\qquad
              c_v\coloneqq \mathrm{occ}_S(v)\mathrm{occ}_T(v),
          \]
          and perform the range additions
          \begin{align*}
              D_S[\ell_v^-] &\mathrel{+}= a_v, & D_S[\ell_v^+ + 1] &\mathrel{-}= a_v,\\
              D_T[\ell_v^-] &\mathrel{+}= b_v, & D_T[\ell_v^+ + 1] &\mathrel{-}= b_v,\\
              D_{ST}[\ell_v^-] &\mathrel{+}= c_v, & D_{ST}[\ell_v^+ + 1] &\mathrel{-}= c_v,
          \end{align*}
          ignoring any indices greater than $L_{\max} +1$.
    \item Convert the difference arrays to prefix sums to obtain
          \[
              A_n(S),\ A_n(T),\ B_n(S,T)\qquad (1\le n\le L_{\max} ).
          \]
    \item Compute $\theta_n(S,T)$ from \eqref{Equation: theta via A and B} for $1\le n\le L_{\max} $
          and finally evaluate
          \[
              \dist \rho(S,T)=\sum_{n=1}^{L_{\max} }\rho^n\theta_n(S,T).
          \]
\end{enumerate}
\end{alg}

\begin{prop}\label{Proposition: computation correctness and complexity}
    The algorithm above computes $\dist \rho(S,T)$ correctly.
    Moreover, assuming a linear-time suffix-tree (or suffix-array) construction,
    the total running time is $O(m+n)$ and the memory usage is $O(m+n)$.
\end{prop}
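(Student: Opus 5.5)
Correctness follows from \Cref{Proposition: A_n and B_n from GST} together with \eqref{Equation: theta via A and B}. First, the post-order traversal in step (2) computes $\mathrm{occ}_S(v)$ and $\mathrm{occ}_T(v)$ correctly: the leaves below $v$ are exactly the suffixes of $U$ beginning with the path label of $v$, and each leaf is counted once by the summation of children. Second, the difference-array updates in step (3) add $a_v$ (resp.\ $b_v$, $c_v$) to exactly the indices $n$ with $\ell_v^-\le n\le \ell_v^+$ after prefix summation. So step (4) produces $\sum_v \mathbf 1_{\ell_v^-\le n\le\ell_v^+}\,\mathrm{occ}_S(v)^2$, and similarly for the other two arrays, which are precisely $A_n(S)$, $A_n(T)$, $B_n(S,T)$ by the proposition.

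Two points need care. First, restricting to nodes whose full path label lies in $\alphabet^*$ must agree with the proposition, which also uses nodes whose path label crosses a sentinel via the truncated range up to $\ell_v^+$. Second, the truncation at index $L_{\max}+1$ must be harmless.

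For the first point, I will check that the contributions of a crossing node are still counted. Such a node $v$ has a parent whose label is sentinel-free, and the points on the edge before the sentinel are the prefixes of lengths $\ell_v^-,\dots,\ell_v^+$. I would therefore implement the update for every non-root node using the truncated $\ell_v^+$, since computing $\ell_v^+$ is $O(1)$ per node once we know where the first sentinel occurs in the edge label (edge labels are stored as index intervals into $U$, and sentinel positions are known). I read the algorithm's phrase ``path label lies in $\alphabet^*$'' as meaning the sentinel-free part of the edge. I expect this reconciliation to be the main subtle point: without it, substrings of $S$ that are maximal only because they are followed by $\#$ would be missed.

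For the second point, $\ell_v^+\le L_{\max}$ always holds, since a sentinel-free substring lies inside $S$ or $T$. Hence dropping updates at indices beyond $L_{\max}+1$ loses nothing. Step (5) then applies \eqref{Equation: theta via A and B}, and the sum is finite because $\theta_n=0$ for $n>L_{\max}$.

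For the complexity bound, the suffix tree of $U$ has $|U|+1=m+n+3$ leaves (counting the terminal). It therefore has $O(m+n)$ nodes, and it is built in linear time by assumption; a constant-size alphabet is implicit since $\alphabet$ is finite. The traversal and range updates cost $O(1)$ per node. The prefix sums and the evaluation of $\theta_n$ cost $O(L_{\max})=O(m+n)$, assuming unit-cost arithmetic and $\arccos$. Memory is dominated by the tree and the three arrays, each of size $O(m+n)$.
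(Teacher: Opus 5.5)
Your proof is correct and follows the same route as the paper: correctness comes from \Cref{Proposition: A_n and B_n from GST} together with \eqref{Equation: theta via A and B}, and the $O(m+n)$ bounds come from the linear size of the generalized suffix tree and $O(1)$ work per node. Your reconciliation step is a genuine addition that the paper's one-line proof omits, and the problem it fixes is broader than you suggest: every leaf's path label ends in $\$$, so literally skipping sentinel-crossing nodes discards every leaf edge, and with it every substring of $S$ or $T$ that occurs exactly once in $U$ (for $S=ab$, $T=cd$ it gives $A_n(S)=A_n(T)=0$ for all $n$ and hence $\dist\rho(S,T)=0$), which is exactly why the range updates must run over all non-root nodes with the truncated $\ell_v^+$, as you propose.
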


\begin{proof}
    Correctness follows from \Cref{Proposition: A_n and B_n from GST} and
    \eqref{Equation: theta via A and B}.
    The post-order computation of $(\mathrm{occ}_S,\mathrm{occ}_T)$ is linear in the size of $\mathcal T$,
    and the range-add pass performs $O(1)$ work per node.
    Since a generalized suffix tree for two strings has size $O(m+n)$, the stated bounds follow.
\end{proof}

\begin{remark}
    The algorithm above may be implemented purely with suffix arrays. One builds the suffix array of $U=S\#T\$$ and its LCP array, then constructs the LCP-interval tree, which is isomorphic (as a compact trie of all suffixes) to the internal-node structure of the suffix tree. The leaf-origin labels and the post-order aggregation of $(\mathrm{occ}_S,\mathrm{occ}_T)$ are then carried out on this interval tree, and the same range-add formulas apply.
\end{remark}

The same approach yields a fast method for computing $\dist \rho(S,\epsilon)$ and for computing all $\theta_k(S,T)$ simultaneously. In particular, the computation time does not depend on $\abs{\alphabet}^n$, avoiding the exponential blow-up inherent in explicit $n$-gram vector construction.

\section{Experiments}\label{Section: Experiments}

%https://archive.ics.uci.edu/dataset/69/molecular+biology+splice+junction+gene+sequences
%https://ftp.ebi.ac.uk/pub/databases/Pfam/current_release/
%https://ml.jku.at/software/LSTM_protein/

This section evaluates the $\rho$-weighted angle distance $\dist{\rho}$ as a
string metric for \emph{stutter-heavy} data, i.e.\ datasets where examples often
differ by tandem-repeat expansions together with mild local variation.  Our goal
is not to ``tune for wins'' on a carefully curated benchmark, but to probe the
behavior of $\dist{\rho}$ under a fixed and label-blind clustering protocol, and
to compare it against classical edit distances and common $k$-gram baselines.

\subsection{Experimental philosophy}\label{subsec:exp_philosophy}

Two design constraints guided these experiments.

\begin{itemize}
\item \textbf{No post-hoc dataset curation.}
We intentionally did \emph{not} curate datasets to make $\dist{\rho}$ look good. We focused on stutter-heavy datasets because stutter-resistance is the intended use-case, but within that scope we report results for every labeled dataset we downloaded and processed during this study.
\item \textbf{Methodology fixed before runs.}
The methodology described in \Cref{subsec:methodology} was chosen before any experiments were run and then applied uniformly to all datasets and all metrics. We did not hand-adjust DBSCAN parameters, random seeds, or sampling settings.  
\end{itemize}

\subsection{Datasets}
We evaluate clustering quality on three labeled datasets of strings drawn from public genomics repositories.  Each dataset is normalized into a uniform three-column schema
\[
(\texttt{label},\ \texttt{sample\_id},\ \texttt{sequence}),
\]
so that the clustering pipeline is agnostic to the data source and only depends on the string distance.

\begin{itemize}
\item \textbf{UCI splice-junction gene sequences (\texttt{splice}).}
This dataset is the UCI Machine Learning Repository ``Molecular Biology (Splice-junction Gene Sequences)'' corpus.  Each sample is a fixed-length (60) DNA-like string with ambiguity codes, labeled by splice-site category (\texttt{EI}, \texttt{IE}, or \texttt{N}).  We preserve the original instance identifier as \texttt{sample\_id} and export the normalized triples.

\item \textbf{UCSC hg19 tandem repeats (\texttt{ucsc\_trf}).}
We extract tandem-repeat records from the UCSC Genome Browser database table \texttt{simpleRepeat} (hg19), which is derived from Tandem Repeats Finder (TRF) calls.  Each record supplies a repeated ``unit'' (motif) along with its genomic interval.  We treat the repeat unit as the class label and subsample motif classes to obtain a balanced labeled dataset; \texttt{sample\_id} encodes the genomic interval and metadata, and \texttt{sequence} is the repeated string.

\item \textbf{NCBI STRSeq locus collections (\texttt{strseq}).}
We aggregate multiple STRSeq BioProjects from NCBI and download the associated FASTA records through the Entrez E-utilities API.  Each record is labeled by locus (e.g., \texttt{D8S1179}, \texttt{vWA}, \texttt{FGA}, etc.), yielding a multi-class dataset with substantial tandem-repeat structure.
\end{itemize}

\subsection{Distances compared}\label{subsec:distances_actual}

We compare $\dist{\rho}$ to standard edit-based and $k$-gram baselines.  All
distances are computed on raw strings.

\begin{itemize}
\item \textbf{Weighted angle distance ($\dist \rho$).}
We evaluate $\dist{\rho}$ for $\rho \in \{0.1,0.2,\dots,1.0\}$, truncating the
sum at \texttt{max\_n=60} (which is exact whenever both strings have length
$\le 60$ and otherwise is a controlled approximation for $\rho < 1$).

\item \textbf{Fixed-scale $k$-gram cosine angle.}
For $k\in\{3,4,5,6\}$ we compute the cosine angle between the $k$-gram count
vectors (this is exactly the $\theta_k$ term of our construction).

\item \textbf{$k$-gram Jensen--Shannon distance.}
For $k\in\{3,4,5,6\}$ we compute the Jensen--Shannon distance between empirical
$k$-gram distributions.

\item \textbf{Edit-distance baselines.}
We include Levenshtein distance, Damerau--Levenshtein distance, and the LCS
distance (\texttt{rapidfuzz.distance.LCSseq}).
\end{itemize}

\subsection{Methodology}\label{subsec:methodology}

For each dataset and each distance $d(\cdot,\cdot)$ we compute the full
pairwise distance matrix
\[
D_{ij} \coloneqq d(x_i,x_j)
\qquad (1\le i,j\le N),
\]
and run DBSCAN with \texttt{metric="precomputed"}.

We tune \texttt{eps} and \texttt{min\_samples} using Optuna (TPE sampler) for
\texttt{n\_trials=100}.  The objective is the silhouette score computed on non-noise points (DBSCAN noise label \texttt{-1} is removed before silhouette is evaluated).  If DBSCAN produces fewer than two non-noise clusters, the silhouette is defined to be $-1$ to prevent degenerate solutions from being selected.

\begin{remark}
    We made the decision to tune to the silhouette score because it is available without labels, is inexpensive once $D$ is computed, and it provides a consistent tuning criterion across all datasets and metrics.  We emphasize that silhouette is \emph{not} assumed to correlate perfectly with ground-truth clustering quality; it is used only as a practical, label-free model-selection heuristic.
\end{remark}

To avoid dataset-specific hand-tuning of the \texttt{eps} scale, we restrict the
search range using quantiles of the observed pairwise distance distribution.
Concretely, let $\mathcal{D}$ be the multiset of upper-triangular entries of $D$.
We set
\[
\texttt{eps} \in
\Bigl[\operatorname{Quantile}_{0.02}(\mathcal{D}),\ \operatorname{Quantile}_{0.20}(\mathcal{D})\Bigr],
\]
with standard numeric guards when the interval degenerates.  We search
\[
\texttt{min\_samples} \in \{3,5,8,13\}.
\]
The selected hyperparameters are the Optuna best trial parameters, and the
final DBSCAN labeling is obtained by rerunning DBSCAN with those parameters.

After selecting DBSCAN parameters using silhouette alone, we evaluate the
resulting clustering against the provided dataset labels using:
\begin{itemize}
\item \textbf{Adjusted Rand Index (ARI)} and
\item \textbf{Normalized Mutual Information (NMI).}
\end{itemize}

Noise points (DBSCAN label \texttt{-1}) are treated as an additional cluster
label for ARI/NMI computation so that all sampled points are evaluated under a
single partition.

In addition to ARI/NMI, we record diagnostic quantities including the selected
(\texttt{eps}, \texttt{min\_samples}), the achieved silhouette score (on non-noise
points), the number of non-noise clusters, and the fraction of points labeled
as noise.  All per-run outputs are written to \texttt{results.csv} and
corresponding plots are saved for each (dataset, distance) pair.
The full experimental pipeline---distance implementations, dataset ingestion,
DBSCAN/Optuna harness, and figure generation---is available
at~\cite{WeightedAngleDistanceCode}.

\subsection{Analysis}

\Cref{tab:best_by_family} reports, for each dataset, the best ARI/NMI achieved by each distance family (maximizing over its parameter sweep).

The main qualitative patterns are:

\begin{itemize}
    \item \textbf{\texttt{splice}: no method clusters well.}
    All distances collapse to a single dominant cluster and achieve essentially zero ARI/NMI (best ARI $\approx 0.0003$). See also the flat weighted-angle sweep in \Cref{fig:splice:wa_ari_nmi_rho}. 
    \item \textbf{\texttt{strseq}: edit distances dominate.}
    LCS achieves ARI $=0.8825$ and NMI $=0.8970$, with the other two edit baselines close behind. The weighted angle distance is competitive with the $k$-gram baselines (best at $\rho=0.6$ with ARI $=0.5060$, NMI $=0.7527$), but it does not outperform classical edit distances on this dataset. \Cref{fig:strseq:wa_ari_nmi_rho} shows a clear ``sweet spot'' around $\rho\in[0.5,0.7]$.
    \item \textbf{\texttt{ucsc\_trf}: weighted angle dominates.}
    Here the weighted angle distance is the strongest method overall (best ARI at $\rho=0.6$: ARI $=0.4596$, NMI $=0.8632$), while edit distances fail catastrophically (ARI $\approx 0$, NMI $\approx 0.02$). This is the cleanest evidence in our suite that the weighted angle construction captures repeat-driven structure that is largely invisible to generic edit distances. The $\rho$-sweep in \Cref{fig:ucsctrf:wa_ari_nmi_rho} is stable over a broad plateau, with best NMI attained at $\rho=1.0$.
\end{itemize}

These rankings agree with the per-distance ``best-to-worst'' listings in our run summary.

\begin{remark}
    The experiments above emphasize clustering quality rather than speed. In our current reference implementation~\cite{WeightedAngleDistanceCode}, weighted angle distance computations are substantially slower than the baselines, despite being linear-time per pair in principle: the bottleneck is Python-level overhead combined with the need to compute $O(N^2)$ pairwise distances for DBSCAN, with no caching across parameter sweeps and no parallelism. We therefore report timing only as a coarse reference (\Cref{tab:runtime}) and view performance optimization (vectorization, memoization of repeated trigonometric calls, and parallel distance-matrix construction) as future work.
\end{remark}

\Cref{tab:rho_stability} summarizes how sensitive the weighted angle method is to $\rho$.
Across our three datasets, the choice $\rho=0.6$ maximizes ARI in all cases, while $\rho=1.0$ slightly improves NMI on \texttt{ucsc\_trf}. The variability across $\rho$ is largest on \texttt{strseq} (where the task is ``easier'' for edit distances and weighted angle competes with them), and smallest on \texttt{ucsc\_trf} (where many $\rho$ values fall on the same high-performing plateau).

ARI and NMI are highly consistent as rankings of distances (\Cref{tab:rank_consistency}), with Spearman correlations above $0.93$ for every dataset. By contrast, silhouette has weak (and sometimes negative) rank correlation with ARI/NMI, most notably on \texttt{strseq}. This is an expected consequence of our label-blind DBSCAN tuning: silhouette is used only to choose $(\varepsilon,\texttt{min\_samples})$ within each distance, and it should be interpreted as an internal ``cluster compactness'' diagnostic rather than a proxy for label recovery.
The scatter in \Cref{fig:strseq:sil_vs_ari} illustrates this mismatch concretely.

DBSCAN noise points are a key ``escape valve'' for distances that induce highly non-uniform neighborhoods.
\Cref{fig:ucsctrf:ari_vs_noise} plots ARI against the noise fraction across all tested distances on \texttt{ucsc\_trf}.
On that dataset, high-performing methods achieve good ARI without labeling an extreme fraction of points as noise; on \texttt{strseq} the best weighted-angle setting ($\rho=0.6$) yields a noticeably larger noise fraction than edit distances, suggesting a tradeoff between repeat-sensitivity and conservative cluster formation.

\section{Conclusion}
\label{Section: Conclusion}

We introduced the $\rho$-weighted angle distance $\dist{\rho}$, a multi-scale metric on
$\alphabet^*$ obtained by comparing all $n$-gram count profiles of two strings through
angular discrepancies and aggregating these discrepancies with an exponential weight.
The resulting geometry is ``cosine-like'' at every scale and is deliberately designed to be
insensitive to repetition magnitude: stutters and changes in motif multiplicity are treated as
controlled perturbations rather than as large edit costs.
Along the way we established several structural properties: $\dist{\rho}$ is a genuine metric for
every $\rho>0$, it has a rigid and explicitly described isometry group, and for $0<\rho<1$ it
admits a nontrivial completion that naturally incorporates shift-invariant probability measures on
one-sided infinite strings.
This completion viewpoint clarifies that $\dist{\rho}$ can be interpreted as a metric on empirical
and limiting $n$-gram distributions, not merely as a distance between finite words.

From a computational standpoint, we showed that $\dist{\rho}(S,T)$ can be computed in
$O(\abs{S}+\abs{T})$ time using generalized suffix trees (or suffix arrays), by reducing the
cross-scale dot products and squared norms to aggregates over internal nodes.
This makes $\dist{\rho}$ viable as a primitive for large-scale sequence comparison in principle,
though the end-to-end runtime in our current Python reference implementation remains dominated by
engineering choices in the surrounding pipeline.
In particular, the DBSCAN experiments in \Cref{Section: Experiments} require constructing full
pairwise distance matrices; our implementation does not currently cache repeated computations
across parameter sweeps, vectorize the trigonometric calculations, or exploit parallelism.
Consequently, the reported wall-clock times should be read as a baseline for a straightforward
prototype rather than as an intrinsic limitation of the methodology.

Empirically, the experiments suggest the following picture.
On datasets whose class structure is driven primarily by tandem-repeat statistics,
weighted angle distances can substantially outperform classical edit distances and fixed-scale
$k$-gram baselines, and performance is reasonably stable across a broad range of~$\rho$.
On more conventional sequence corpora, edit distances may remain the best choice, and in some
settings (such as \texttt{splice} in our suite) DBSCAN itself appears to be a poor model of the
ground-truth labeling regardless of the underlying distance.
Overall, the weighted angle construction seems most valuable when repetition scale should be
discounted but repetition \emph{structure} should still matter.

Several directions look immediately promising.
On the algorithmic side, a performant implementation could combine (i) suffix-array based
computation of the required aggregates, (ii) memoization of repeated subexpressions across $\rho$,
and (iii) parallel construction of distance matrices or approximate neighborhood queries.
On the modeling side, one can consider alternative weighting schemes (beyond geometric weights)
to emphasize a chosen range of $n$-gram scales, or learn such weights from unlabeled data.
Finally, the completion theorem hints at a broader viewpoint in which $\dist{\rho}$ is a metric on
objects closer to symbolic dynamical systems than to isolated strings; exploiting this structure
for tasks such as classification, anomaly detection, or retrieval is an appealing target for
future work.

\section{Appendix: Geometry}
\label{Appendix: Geometry}

In this appendix, we conduct a more careful analysis of the mathematical properties of $\dist \rho$. We begin with a proof of \Cref{Proposition: dist rho is a metric}, which we restate here for convenience.

\begin{prop}[\Cref{Proposition: dist rho is a metric}]\label{Proposition: dist rho is a metric again}
    For any $\rho > 0$, the map $\dist \rho : \alphabet^* \times \alphabet^* \to \bbR_{\geq 0}$ is a metric.
\end{prop}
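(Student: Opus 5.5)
The plan is to verify the metric axioms scale by scale, using that $\dist\rho$ is a finite nonnegative combination of the functions $\theta_n$. Only finitely many terms are nonzero for any fixed pair, so there are no convergence issues even when $\rho \geq 1$. Nonnegativity and symmetry are immediate, since $\theta$ takes values in $[0,\pi/2]$ and is symmetric in its arguments. The triangle inequality for $\dist\rho$ follows by multiplying the inequality
\[
\theta_n(S,U)\le \theta_n(S,T)+\theta_n(T,U)
\]
by $\rho^n>0$ and summing over $n$. So the substantive points are (i) that $\theta$ is a pseudometric on all of $\bbR_{\geq 0}^N$, including the zero vector, and (ii) that $\dist\rho(S,T)=0$ forces $S=T$.

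For (i), first take $u,v,w$ all nonzero. Then $\theta(u,v)$ is the geodesic (great-circle) distance between $u/\norm u$ and $v/\norm v$ on the unit sphere, and the triangle inequality is the standard spherical one. If one wants it self-contained, it can be derived by writing $w$'s unit vector in terms of its component along $v$ and an orthogonal part, then applying Cauchy--Schwarz to get $\cos\theta(u,w)\ge\cos(\theta(u,v)+\theta(v,w))$ when the right-hand angle is at most $\pi$. The remaining cases involve a zero vector, and I would check them directly using that all angles between nonzero nonnegative vectors lie in $[0,\pi/2]$:
\begin{itemize}
\item If the middle vector $v$ is $0$ and $u,w$ are nonzero, the right-hand side is $\pi\ge\pi/2$.
\item If exactly one endpoint is $0$, say $u=0\ne w$, the left-hand side is $\pi/2$. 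The right-hand side contains either $\theta(0,v)=\pi/2$ (if $v\ne0$) or $\theta(0,w)=\pi/2$ (if $v=0$), so it is at least $\pi/2$.
\item If $u=w=0$, the left-hand side is $0$ and there is nothing to prove.
\end{itemize}
I expect this step, and in particular the spherical triangle inequality, to be the only real obstacle; it is classical, and I would cite it rather than reprove it.

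For (ii), suppose $\dist\rho(S,T)=0$. Then $\theta_n(S,T)=0$ for every $n$, so for each $n$ the vectors $\ngram Sn$ and $\ngram Tn$ are either both zero or positive multiples of each other. Note that $\ngram Sn\ne0$ exactly when $n\le\abs S$. If $\abs S\ne\abs T$, say $\abs S>\abs T$, then at $n=\abs S$ we have $\ngram Sn\ne0=\ngram Tn$, so $\theta_n=\pi/2$, a contradiction. This also covers the case where $T=\epsilon$ and $S$ is nonempty. Hence $\abs S=\abs T=m$, and if $m=0$ both strings are $\epsilon$. Otherwise, at $n=m$ each vector is the indicator of the unique $m$-gram of its string, namely $S$ itself and $T$ itself. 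Two indicator vectors are parallel only if they coincide, so $S=T$.
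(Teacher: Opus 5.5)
Your proof is correct and follows the same route as the paper. The axioms other than separation are inherited scale by scale from the pseudometric $\theta$, and separation comes from first forcing $\abs S = \abs T$ via $\theta_{\abs S}$ and then comparing the indicator vectors $\ngram S m$ and $\ngram T m$ at the top scale $m$; your explicit check of the zero-vector cases of the triangle inequality for $\theta$ and of the case $m=0$ just fills in details the paper leaves implicit.
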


\begin{proof}
    Symmetry and the triangle inequality both inherit from the pseudometricity of $\theta$, as does the triviality $\dist \rho(S, S) = 0$ for each $S \in \alphabet^*$. It suffices to show that if $\dist \rho (S, T) = 0$ then $S = T$.

    Let $S, T \in \alphabet^*$ be arbitrary. Note that $\dist \rho (S, T) = 0$ if and only if $\theta(\ngram S n, \ngram T n) = 0$ for all $n$. If $\abs{S} > \abs{T}$ then letting $m = \abs{S}$ we have $\theta(\ngram S m, \ngram T m) = \pi / 2$ so $\dist \rho (S, T) \neq 0$. Suppose now that $\dist \rho (S, T) = 0$. We must have $\abs S = \abs T = m$ for some $m$. Then $\theta(\ngram S m, \ngram T m) = 0$, but $\ngram S m = \set{S : 1}$ and $\ngram T m = \set{T : 1}$, so $S = T$ as desired.
\end{proof}

\begin{remark}\label{Remark: rho-all}
    The proof of \Cref{Proposition: dist rho is a metric again} in fact shows that
    $\dist\rho$ is a metric on $\alphabet^*$ for every $\rho>0$.
    For each fixed $\rho>0$ the induced topology on $\alphabet^*$ is discrete
    (see \Cref{Proposition: discrete topology on strings}), but when $\rho\in(0,1)$ the metric
    is still topologically interesting (see \Cref{Theorem: Completion of alphabet^*}), and is uniformly bounded by $\frac{\pi}{2}\cdot\frac{\rho}{1-\rho}$. We therefore restrict attention to $\rho\in(0,1)$ in the main text.
\end{remark}

From now on, unless explicitly stated otherwise, we assume $\rho \in (0,1)$.

\subsection{Respecting edits}\label{sec:edits}

In this subsection, we prove that $\dist \rho$ is well-behaved under insertion, deletion, substitution, and stutter.

Our first theorem controls the behavior of $\dist \rho$ under insertions, deletions, and substitutions.

\begin{theorem}\label{thm-insertions-deletions-substitutions}
    Let $\rho \in (0, 1)$. Let $P, Q \in \alphabet^*$ be strings of lengths $m$ and $n$ respectively, and let $a, b \in \alphabet$ be arbitrary. Then
    \begin{equation}\label{Equation: insertion and deletion}
    \dist \rho (P a Q, P Q) \leq \frac{\pi \sqrt 2}{\sqrt {L_{\edit}}} A_{M_{\edit}}(\rho) + \frac{\pi}{2} \cdot \frac{\rho^{{K_{\edit}}+1}}{1 - \rho} + \frac \pi 2 \rho^{{L_{\edit}} + 1}
    \end{equation}
    and
    \begin{equation}\label{Equation: substitution}
    \dist \rho(PaQ, PbQ) \leq \frac{\pi}{\sqrt {L_{\edit}}} A_{M_{\edit}}(\rho) + \frac{\pi}{2} \cdot \frac{\rho^{{K_{\edit}}+1}}{1 - \rho},
    \end{equation}
    where ${L_{\edit}} \coloneqq m + n$, ${K_{\edit}} \coloneqq \floor{{L_{\edit}}/2}$, ${M_{\edit}} \coloneqq \min(m + 1, n + 1)$, and 
    \begin{equation}\label{Equation: AM(rho)}
    A_{M}(\rho) \coloneqq \sum_{k = 1}^\infty \rho^k \min(k, M)
    %= \sum_{1 \leq k \leq M} k\rho^k + \frac{M \rho^{M+1}}{1 - \rho}
    = \frac{\rho\bigl(1-(M+1)\rho^M + M\rho^{M+1}\bigr)}{(1-\rho)^2} + \frac{M \rho^{M+1}}{1 - \rho}.
    \end{equation}
\end{theorem}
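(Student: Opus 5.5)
We split the series $\dist\rho = \sum_k \rho^k \theta_k$ into three ranges of scales and bound each range separately. For small scales, $k \le K_{\edit}$, we use a perturbation bound on the angle. For large scales, $k > K_{\edit}$, we use the trivial bound $\theta_k \le \pi/2$, which gives the tail $\frac{\pi}{2}\cdot\frac{\rho^{K_{\edit}+1}}{1-\rho}$. In the insertion case there is one further exceptional scale, $k = L_{\edit}+1$. There $PaQ$ has a $k$-gram but $PQ$ has none, so $\theta_{k}=\pi/2$. This term is accounted for separately and gives $\frac\pi2\rho^{L_{\edit}+1}$. (Strictly it is already inside the tail sum, so one may simply note that it is over-counted harmlessly.)

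The core step is a lemma for nonnegative nonzero vectors $u,v$:
\[
\theta(u,v) \le \frac{\pi}{2}\cdot\frac{\norm{u-v}}{\max(\norm u,\norm v)} .
\]
The proof starts from the fact that the angle between the two vectors is at most $\pi/2$ times the chord length between their normalizations. That chord length is at most $2\norm{u-v}/\norm u$, and the sharper form is $\norm{u-v}/\norm u$ when $u$ is the longer vector. Equivalently, $\sin\theta \le \norm{u-v}/\norm u$ and $\theta \le \frac{\pi}{2}\sin\theta$ on $[0,\pi/2]$. The constant that comes out of this lemma must match the $\pi\sqrt2$ and $\pi$ prefactors in the statement, so I would calibrate it at this point.

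Next, fix a scale $k$ and count how many $k$-grams change under the edit. The windows of $PaQ$ that contain the edited position number $\min(k, m+1, n+1, \dots)$, which is at most $\min(k,M_{\edit})$ for $k\le K_{\edit}$. For an insertion, $\min(k,M_{\edit})$ windows are destroyed and at most $\min(k,M_{\edit})-1$ are created. Hence $\norm{\ngram{PaQ}{k}-\ngram{PQ}{k}}_1 \le 2\min(k,M_{\edit})$. For a substitution the same count gives $\le 2\min(k,M_{\edit})$ as well.

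To turn this into an $\ell^2$ bound, split the changes into removals and additions, and note that the windows of each kind are disjoint as positions. The removals contribute at most $\min(k,M_{\edit})$ in $\ell^2$ (triangle inequality over unit vectors), and likewise the additions, giving $\sqrt2\min(k,M_{\edit})$ for insertions. For substitutions it is cleaner to compare through the common part, which yields the factor $\pi$ rather than $\pi\sqrt2$.

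For the denominator, a vector of $L-k+1$ counts satisfies $\norm{\ngram{S}{k}} \ge \sqrt{L-k+1}$, since $\ell^2 \ge \ell^1/\sqrt{\text{support}}$ and each count is at least $1$, so $\sum c^2 \ge \sum c$. For $k\le K_{\edit}$ we have $L_{\edit}-k+1 \ge L_{\edit}/2$. Thus $\theta_k \le C\min(k,M_{\edit})/\sqrt{L_{\edit}}$, and summing $\rho^k$ against this produces $A_{M_{\edit}}(\rho)$. The closed form in \eqref{Equation: AM(rho)} is then a routine summation of the arithmetico-geometric series.

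The main obstacle is the constant bookkeeping. The angle lemma (chord versus arc, and whether to divide by $\norm u$ or by $\min$), the $\ell^2$ size of the difference vector, and the $\sqrt{L/2}$ denominator must combine to exactly $\pi\sqrt2$ and $\pi$. I would first try $\theta\le \frac{\pi}{2}\sin\theta$ with $\sin\theta\le \norm{u-v}/\norm{u}$, taking $u$ as the vector with the larger norm. If the constants come out too large by a factor of $\sqrt2$, I would instead bound the denominator using $\norm{\ngram S k}^2\ge |S|-k+1$ directly, with $|S| \ge L_{\edit}$.
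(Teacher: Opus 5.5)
Your proposal is correct and follows the paper's proof essentially step for step. Both use the same split at $K_{\edit}$ with the trivial $\frac{\pi}{2}$ tail (and the harmlessly double-counted $\frac{\pi}{2}\rho^{L_{\edit}+1}$), the same window counts bounded by $\min(k,M_{\edit})$, the same norm lower bound $\sqrt{L_{\edit}-k+1}\ge\sqrt{L_{\edit}/2}$, and a linear angle inequality with constant $\frac{\pi}{2}$; yours comes from $\sin\theta\le\norm{u-v}/\max(\norm{u},\norm{v})$ plus Jordan's inequality, the paper's from $2\arcsin$ with $\min(\norm{x},\norm{y})$ in the denominator.

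No recalibration or fallback is needed: $\frac{\pi}{2}\cdot 2\min(k,M_{\edit})/\sqrt{L_{\edit}/2}$ and $\frac{\pi}{2}\cdot\sqrt{2}\min(k,M_{\edit})/\sqrt{L_{\edit}/2}$ give exactly the $\pi\sqrt{2}$ (insertion) and $\pi$ (substitution) prefactors. Two small corrections, neither affecting validity:
\begin{itemize}
\item Your chord remark is backwards: the chord between normalizations is bounded by $\norm{u-v}/\min(\norm{u},\norm{v})$, not by the $\max$ version. The sine route you actually use is valid.
\item Your insertion estimate $\sqrt{2}\min(k,M_{\edit})$ holds because the added and removed count vectors are nonnegative and so have nonnegative inner product, not because of positional disjointness. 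It would improve the insertion prefactor to $\pi$, but the cruder bound $2\min(k,M_{\edit})$ already suffices.
\end{itemize}
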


\begin{proof}
    If $x, y \in \bbR^N$ are nonzero vectors, then the Cauchy-Schwarz inequality implies
    \begin{equation}\label{Equation: inequality between theta and arcsine}
        \theta(x, y) \leq 2 \arcsin\parent{\min\!\parent{1,\ \frac{\norm{x - y}}{2 \min \parent{\norm x, \norm y}}}}
        \le \pi \cdot \frac{\norm{x-y}}{2\min(\norm x,\norm y)}.
    \end{equation}

    Now let $\rho \in (0, 1)$, let $P, Q \in \alphabet^*$ be strings of lengths $m$ and $n$ respectively, and let $a, b \in \alphabet$ be arbitrary, and let ${L_{\edit}}, {K_{\edit}},$ and ${M_{\edit}}$ be as above.

    We proceed to prove \eqref{Equation: insertion and deletion} by comparing $PQ$ with $P a Q$. For $k \leq {L_{\edit}}$, only $k$-grams that cross the junction between $P$ and $Q$ are removed, and only $k$-grams that include the inserted $a$ are added. We define
    \begin{equation}\label{Definition: ck}
    c_k \coloneqq \max\parent{0,\ \min(m, k - 1) - \max(1, k - n) + 1}
    \end{equation}
    to be the number of $k$-length windows in $P Q$ that cross from $P$ to $Q$, and define
    \begin{equation}\label{Equation: dk}
    d_k \coloneqq \max\parent{0,\ \min(m, k - 1) - \max(0, k - n - 1) + 1}
    \end{equation}
    to be the number of $k$-length windows in $P a Q$ that include the new $a$. If $m, n \geq k$, these equations simplify to $c_k = k - 1$ and $d_k = k$.

    Each of the $c_k$ removed windows decreases one coordinate of $\ngram{PQ}{k}$ by $1$, and each of the $d_k$ new windows increases one coordinate by $1$. By the triangle inequality,
    \begin{equation}\label{Equation: L2 insertion safe}
        \norm{\ngram {PaQ} k - \ngram{PQ} k} \leq c_k + d_k.
    \end{equation}
    Also $\theta_{{L_{\edit}} + 1}(P a Q, P Q) = \frac \pi 2$. Since $\ngram{PQ}k$ has nonnegative integer entries summing to ${L_{\edit}}-k+1$, we have $\norm{\ngram{PQ}k}\ge \sqrt{{L_{\edit}}-k+1}$, and similarly $\norm{\ngram{PaQ}k}\ge \sqrt{{L_{\edit}}-k+1}$. Hence, by \eqref{Equation: inequality between theta and arcsine},
    \[
        \theta_k(PaQ, PQ)
        \le
        \pi\cdot \frac{c_k+d_k}{2\sqrt{{L_{\edit}}-k+1}}
        \qquad (k\le {L_{\edit}}).
    \]
    Therefore
    \begin{equation}\label{Equation: bound on distance from PaQ to PQ}
    \dist \rho (P a Q, P Q)
    \leq 
    \sum_{1 \leq k \leq {L_{\edit}}} \rho^k \cdot \pi \frac{c_k + d_k}{2 \sqrt{{L_{\edit}} - k + 1}} + \frac \pi 2 \rho^{{L_{\edit}} + 1}.
    \end{equation}

    For $k\le {K_{\edit}}$ we have ${L_{\edit}}-k+1\ge {L_{\edit}}/2$, hence
    \[
        \pi \frac{c_k + d_k}{2 \sqrt{{L_{\edit}} - k + 1}}
        \le
        \frac{\pi}{\sqrt{2{L_{\edit}}}}(c_k+d_k).
    \]
    For $k>{K_{\edit}}$ we use $\theta_k(PaQ, PQ)\le \pi/2$. Thus
    \[
        \dist \rho(PaQ, PQ)
        \le
        \frac{\pi}{\sqrt{2{L_{\edit}}}} \sum_{1\le k\le {K_{\edit}}}\rho^k(c_k+d_k)
        +
        \frac{\pi}{2}\sum_{k={K_{\edit}}+1}^{\infty}\rho^k
        +
        \frac{\pi}{2}\rho^{{L_{\edit}}+1}.
    \]
    For $k\le {K_{\edit}}$, we have $c_k + d_k \le 2\min(k,{M_{\edit}})$, so
    \[
        \sum_{1\le k\le {K_{\edit}}}\rho^k(c_k+d_k)
        \le
        2\sum_{k=1}^{\infty}\rho^k\min(k,{M_{\edit}})
        =
        2A_{M_{\edit}}(\rho).
    \]
    Substituting this and evaluating the tail $\sum_{k={K_{\edit}}+1}^{\infty}\rho^k$ yields \eqref{Equation: insertion and deletion}.

    We now turn our attention to $\dist \rho(P a Q, P b Q)$. Define
    \begin{equation}\label{Definition: ek}
    e_k \coloneqq \max\parent{0,\ \min(m, {L_{\edit}} - k + 1) - \max(0, m - k + 1) + 1}
    \end{equation}
    to be the number of length-$k$ windows in the strings $P a Q$ and $P b Q$ that contain $a$ or $b$ respectively. Each such window changes one $k$-gram into another, so the difference vector is a sum of $e_k$ vectors of the form $u-v$, where $u$ and $v$ are standard basis vectors. Hence, by the triangle inequality,
    \begin{equation}\label{Equation: L2 substitution safe}
        \norm{\ngram {PaQ} k - \ngram{PbQ} k} \leq \sqrt{2}\, e_k.
    \end{equation}
    Since $\ngram{PaQ}k$ and $\ngram{PbQ}k$ have nonnegative integer entries summing to ${L_{\edit}}-k+2$, we have $\min(\norm{\ngram{PaQ}k},\norm{\ngram{PbQ}k})\ge \sqrt{{L_{\edit}}-k+2}$. Thus, by \eqref{Equation: inequality between theta and arcsine},
    \[
        \theta_k(PaQ, PbQ)
        \le
        \pi \cdot \frac{\sqrt{2}\,e_k}{2\sqrt{{L_{\edit}}-k+2}}
        \qquad (k\le {L_{\edit}}+1).
    \]
    Therefore
    \begin{equation}\label{Equation: bound on distance from PaQ to PbQ}
    \dist \rho (P a Q, P b Q)
    \leq 
    \sum_{1 \leq k \leq {L_{\edit}} + 1} \rho^k \cdot \pi \frac{\sqrt{2}\,e_k}{2 \sqrt{{L_{\edit}} - k + 2}}.
    \end{equation}

    For $k\le {K_{\edit}}$ we have ${L_{\edit}}-k+2\ge {L_{\edit}}/2$, so
    \[
        \pi \frac{\sqrt{2}\,e_k}{2 \sqrt{{L_{\edit}} - k + 2}}
        \le
        \frac{\pi}{\sqrt{{L_{\edit}}}}\, e_k.
    \]
    For $k>{K_{\edit}}$ we use $\theta_k(PaQ, PbQ)\le \pi/2$. Hence
    \[
        \dist \rho(PaQ, PbQ)
        \le
        \frac{\pi}{\sqrt{{L_{\edit}}}} \sum_{1\le k\le {K_{\edit}}}\rho^k e_k
        +
        \frac{\pi}{2}\sum_{k={K_{\edit}}+1}^{\infty}\rho^k.
    \]
    Since $e_k \le \min(k,{M_{\edit}})$ for $k\le {K_{\edit}}$, we obtain
    \[
        \sum_{1\le k\le {K_{\edit}}}\rho^k e_k
        \le
        \sum_{k=1}^{\infty}\rho^k\min(k,{M_{\edit}})
        =
        A_{M_{\edit}}(\rho),
    \]
    and evaluating the tail gives \eqref{Equation: substitution}.
\end{proof}

Our next theorem controls the behavior of $\dist \rho$ under stutter.

\begin{theorem}\label{Theorem: stutter}
    Let $\rho \in (0, 1)$. Let $P_1, P_2, Q \in \alphabet^*$ be strings of lengths $m_1$, $m_2$ and $n$ respectively, and let $\ell$ be a positive integer. Then
    \begin{equation}\label{Equation: stutter}
        \dist \rho(P_1QP_2,\, P_1 Q^\ell P_2)
        \leq 
        \frac{\pi}{\sqrt{2L_{\stutter}}}
        \parent{ 3A_{M_{\stutter}}(\rho) + \frac{\rho}{1-\rho}\, r }
        + \frac{\pi}{2}\cdot \frac{\rho^{{K_{\stutter}}+1}}{1-\rho}
        + \frac{\pi}{2} \cdot \frac{\rho^{{L_{\stutter}}+1}(1-\rho^r)}{1-\rho},
    \end{equation}
    where ${L_{\stutter}} \coloneqq m_1 + n + m_2$, $r \coloneqq (\ell - 1) n$, ${K_{\stutter}} \coloneqq \floor{{L_{\stutter}}/2}$, and ${M_{\stutter}} \coloneqq \min(m_1 + n + 1, m_2 + 1)$.
    $A_{M}(\rho)$ is defined as in \eqref{Equation: AM(rho)}.
\end{theorem}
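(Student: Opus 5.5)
We follow the template of the proof of \Cref{thm-insertions-deletions-substitutions}. Write $X \coloneqq P_1 Q P_2$ and $Y \coloneqq P_1 Q^\ell P_2$, so $\abs X = L_{\stutter}$ and $\abs Y = L_{\stutter} + r$. For each scale $k$ we bound $\theta_k(X,Y)$ and split the sum defining $\dist\rho$ into three ranges. For $k \le K_{\stutter}$ we use the bound \eqref{Equation: inequality between theta and arcsine}. For $K_{\stutter} < k \le L_{\stutter}$ we use the trivial bound $\theta_k \le \pi/2$; together with the first range's overflow this produces the tail $\frac{\pi}{2}\frac{\rho^{K_{\stutter}+1}}{1-\rho}$. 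For $L_{\stutter} < k \le L_{\stutter}+r$, $X$ has no $k$-grams while $Y$ does, so $\theta_k = \pi/2$. Summing $\frac{\pi}{2}\rho^k$ over these $r$ values gives exactly $\frac{\pi}{2}\rho^{L_{\stutter}+1}\frac{1-\rho^r}{1-\rho}$. For $k > L_{\stutter}+r$ both vectors vanish and $\theta_k = 0$.

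The main work is the count in the range $k \le K_{\stutter}$. There $\norm{\ngram X k} \ge \sqrt{L_{\stutter}-k+1} \ge \sqrt{L_{\stutter}/2}$, and the same lower bound holds for $Y$, which is longer. Hence
\[
\theta_k(X,Y) \le \frac{\pi}{\sqrt{2L_{\stutter}}}\,\norm{\ngram X k - \ngram Y k}.
\]
We bound the difference of count vectors combinatorially. View $Y = P_1 Q\, Q^{\ell-1} P_2$ as $X$ with the block $R \coloneqq Q^{\ell-1}$ of length $r$ inserted at the junction between $P_1Q$ and $P_2$.

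The $k$-windows of $Y$ fall into three classes. Windows lying inside $P_1 Q$ or inside $P_2$ also appear in $X$. The windows of $X$ crossing the $P_1Q \mid P_2$ junction number at most $\min(k-1, M_{\stutter})$. The remaining windows of $Y$ meet $R$.

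Among the windows meeting $R$, two kinds are harmless to count crudely. Those meeting the right end of $R$ (crossing into $P_2$) number at most $\min(k,M_{\stutter})$. Those lying entirely inside the periodic region $Q^\ell$ number at most $r$. The boundary windows that start in $P_1Q$ and extend into $R$ contribute at most another $\min(k,M_{\stutter})$-type count; here the periodicity of $Q^\ell$ is used to match them with windows already present in the $P_1Q$ part, or else they are simply absorbed.

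By the triangle inequality, $\norm{\ngram Xk - \ngram Yk} \le 3\min(k,M_{\stutter}) + r$. Multiplying by $\rho^k$ and summing over $k \le K_{\stutter}$ gives at most $3A_{M_{\stutter}}(\rho) + \frac{\rho}{1-\rho}\,r$, which is the first term of \eqref{Equation: stutter}.

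The step I expect to be the main obstacle is making this window bookkeeping precise, so that the coefficient $3$ and the single factor $r$ come out honestly. In particular one must check that interior windows of $Q^\ell$ cost at most $r$ (one per extra position) rather than $r$ plus an extra $k$, and that $M_{\stutter} = \min(m_1+n+1, m_2+1)$ is the right cap, since the junction sits after $P_1Q$. If the clean count fails, I would fall back to cruder estimates. Any slack can be absorbed into the $3A_{M_{\stutter}}(\rho)$ term, because $\min(k,M_{\stutter})$ counts dominate the boundary effects.
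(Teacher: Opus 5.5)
Your skeleton is the paper's: the same three ranges of $k$, the same lower bound $\min(\norm{\ngram Xk},\norm{\ngram Yk})\ge\sqrt{L_{\stutter}/2}$ for $k\le K_{\stutter}$, and the same triangle-inequality reduction to the $X$-windows crossing the $P_1Q\mid P_2$ junction plus the $Y$-windows meeting $R$. Your target $r+3\min(k,M_{\stutter})$ is also the paper's, and it is true.

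The gap is in how you count the windows meeting $R$. Set $p\coloneqq m_1+n$ and $q\coloneqq m_2$. The windows straddling the $R\mid P_2$ boundary may start deep inside $R$, so only $q$ (and $k$) limits them, and there can be $\min(k-1,q)$ of them. Symmetrically, the windows straddling the $P_1Q\mid R$ boundary can number $\min(k-1,p)$. Since $M_{\stutter}=\min(p,q)+1$ uses the smaller side, neither class is individually bounded by $\min(k,M_{\stutter})$.

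Concretely, take $P_1=\epsilon$, $Q=a$, $P_2=b^{10}$, $\ell=11$ and $k=5\le K_{\stutter}=5$. Then $M_{\stutter}=2$, yet $Y=a^{11}b^{10}$ has four $5$-windows $a^4b$, $a^3b^2$, $a^2b^3$, $ab^4$ across the $R\mid P_2$ boundary. So your class-by-class bound fails. Your fallback remark that $\min(k,M_{\stutter})$ ``dominates the boundary effects'' is exactly the false statement.

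Periodicity cannot rescue it. A new window equal to some window in the $P_1Q$ part still adds $+1$ to that coordinate, because those windows are already cancelled against the identical windows of $X$. The paper's proof uses no periodicity at all; the bound holds for inserting any block of length $r$.

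The repair, which is what the paper's explicit formula for $d_k$ encodes, is to count all windows meeting $R$ at once by their start positions. These fill the single interval $\max(1,p-k+2)\le s\le\min(p+r,\,L_{\stutter}+r-k+1)$. Bounding its length in three ways:
\begin{itemize}
\item via the two inner endpoints, it is at most $r+k-1$;
\item via $s\ge 1$, it is at most $p+r$;
\item via $s\le L_{\stutter}+r-k+1$, it is at most $q+r$.
\end{itemize}
Hence $d_k\le r+\min(k-1,p,q)$.

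In the example, the windows inside $Q^\ell$ meeting $R$ drop to $r-3$, which compensates for the large right-boundary class. A count with separate caps $r$, $\min(k,M_{\stutter})$, $\min(k,M_{\stutter})$ cannot see this trade-off. Combined with $c_k\le\min(k-1,p,q)$, the interval count gives $c_k+d_k\le r+2\min(k,M_{\stutter})$, which is stronger than needed. The rest of your argument then goes through unchanged.
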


\begin{proof}
    Write $U \coloneqq P_1QP_2$ and $V \coloneqq P_1Q^\ell P_2$, and write $p \coloneqq m_1 + n$ and $q \coloneqq m_2$, so that $\abs U = p+q = {L_{\stutter}}$ and $V$ is obtained from $U$ by inserting the block $B \coloneqq Q^{\ell-1}$ of length $r$ between the prefix of length $p$ and the suffix of length $q$.

    Fix $k\le {L_{\stutter}}$. Define
    \begin{equation}\label{Definition: ck stutter}
        c_k
        \coloneqq
        \max\parent{0,\ \min(p, k - 1) - \max(1, k - q) + 1}
    \end{equation}
    to be the number of length-$k$ windows in $U$ that cross the junction between the prefix of length $p$ and the suffix of length $q$. Define 
    \begin{equation}\label{Definition: dk stutter}
        d_k
        \coloneqq
        \max\parent{0,\ \min(p+r,\, {L_{\stutter}}+r-k+1) - \max(1,\, p-k+2) + 1}
    \end{equation}
    to be the number of length-$k$ windows in $V$ that intersect the inserted block $B$. Only $k$-grams coming from these windows can change when passing from $U$ to $V$. By the same triangle-inequality argument as in \eqref{Equation: L2 insertion safe}, we have
    \begin{equation}\label{Equation: diff bound stutter}
        \norm{\ngram V k - \ngram U k} \leq c_k + d_k.
    \end{equation}
    Since $\ngram U k$ has nonnegative integer entries summing to ${L_{\stutter}}-k+1$, we have $\norm{\ngram U k} \geq \sqrt{{L_{\stutter}}-k+1},$
    and similarly for $\ngram V k$. Therefore, by \eqref{Equation: inequality between theta and arcsine}, for $k \leq {L_{\stutter}}$ we have
    \[
        \theta_k(U,V)
        \le
        \pi\cdot \frac{c_k+d_k}{2\sqrt{{L_{\stutter}}-k+1}}.
    \]

    For $k>{L_{\stutter}}$, we have $\ngram U k =0$, while $\ngram V k\neq 0$ for $k\leq {L_{\stutter}}+r$, so
    \[
        \theta_k(U,V)= \begin{cases} \frac{\pi}{2} & \text{if} \ {L_{\stutter}} < k \leq {L_{\stutter}} + r, \\
        0 & \text{if} k > {L_{\stutter}} + r.
        \end{cases}
    \]
    Hence
    \begin{equation}\label{Equation: sharp stutter}
        \dist \rho(U,V)
        \le
        \sum_{1\le k\le {L_{\stutter}}}\rho^k\cdot \pi\frac{c_k+d_k}{2\sqrt{{L_{\stutter}}-k+1}}
        +
        \frac{\pi}{2}\sum_{k={L_{\stutter}}+1}^{{L_{\stutter}}+r}\rho^k.
    \end{equation}

    Let ${K_{\stutter}}=\floor{{L_{\stutter}}/2}$. For $k\le {K_{\stutter}}$ we have ${L_{\stutter}}-k+1\ge {L_{\stutter}}/2$, so
    \[
        \pi\frac{c_k+d_k}{2\sqrt{{L_{\stutter}}-k+1}}
        \le
        \frac{\pi}{\sqrt{2{L_{\stutter}}}}(c_k+d_k).
    \]
    Moreover,
    \[
        c_k \le \min(k,{M_{\stutter}}),
        \qquad
        d_k \le r + 2\min(k,{M_{\stutter}}),
    \]
    so
    \begin{equation}\label{Equation: c+d stutter}
        c_k + d_k \le r + 3\min(k,{M_{\stutter}})
        \qquad (k\le {K_{\stutter}}).
    \end{equation}
    Therefore
    \[
        \sum_{1\le k\le {K_{\stutter}}}\rho^k(c_k+d_k)
        \le
        r\sum_{k=1}^{{K_{\stutter}}}\rho^k + 3\sum_{k=1}^{{K_{\stutter}}}\rho^k\min(k,{M_{\stutter}})
        \le
        r\frac{\rho}{1-\rho} + 3A_{M_{\stutter}}(\rho).
    \]
    Bounding the remaining $k>{K_{\stutter}}$ part of the first sum in \eqref{Equation: sharp stutter} by $\frac{\pi}{2}\sum_{k={K_{\stutter}}+1}^{\infty}\rho^k$ and evaluating the geometric tails yields \eqref{Equation: stutter}.
\end{proof}

\subsection{Topology}\label{subsec:topology}

In this subsection, we describe the topology induced on $\alphabet^*$ by $\dist \rho$. 

We first note that we can recover $\abs{S}$ from $\dist \rho$.

\begin{lemma}\label{Lemma: We can recover |S| from dist rho}
    Let $\rho > 0$. For any $S \in \alphabet^*$, we have
    \[
    \abs{S} = \begin{cases}
        \frac{\log\parent{1 - \frac{2 (1 - \rho) \dist \rho (S, \epsilon)}{\pi \rho}}}{\log \rho} & \text{if} \ \rho \neq 1, \\
        \frac{2}{\pi} \dist \rho (S, \epsilon) & \text{if} \ \rho = 1.
    \end{cases}
    \]
\end{lemma}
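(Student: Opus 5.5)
The plan is to compute $\dist \rho(S, \epsilon)$ explicitly as a function of $\abs S$ and then invert that function. Write $m \coloneqq \abs S$.

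First, I would evaluate each term $\theta_n(S, \epsilon)$. For every $n \ge 1$ we have $\ngram \epsilon n = 0$, since the empty string has no substrings of positive length. On the other hand, $\ngram S n$ has nonnegative integer entries summing to $m - n + 1$. Hence $\ngram S n \neq 0$ exactly when $1 \le n \le m$. By \Cref{Definition: theta(u, v)} this gives $\theta_n(S, \epsilon) = \pi/2$ for $1 \le n \le m$, falling under the case ``exactly one vector is zero''. For $n > m$ both vectors vanish, so $\theta_n(S, \epsilon) = 0$. Therefore
\[
\dist \rho(S, \epsilon) = \frac{\pi}{2} \sum_{n=1}^{m} \rho^n.
\]
If $m = 0$ this sum is empty and the distance is $0$.

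Second, I would sum the finite geometric series. For $\rho = 1$ it equals $m$, so $\dist 1(S, \epsilon) = \frac{\pi}{2} m$, and hence $m = \frac{2}{\pi}\dist 1(S, \epsilon)$. For $\rho \neq 1$ the sum is $\frac{\rho(1-\rho^m)}{1-\rho}$, which gives
\[
\dist \rho(S, \epsilon) = \frac{\pi \rho (1 - \rho^m)}{2(1-\rho)}.
\]
Solving for $\rho^m$ yields $\rho^m = 1 - \frac{2(1-\rho)\dist \rho(S,\epsilon)}{\pi\rho}$. Taking logarithms and dividing by $\log \rho$ recovers $m$, which is the claimed formula.

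The only point needing care is that this inversion is legitimate when $\rho > 1$ as well as when $\rho < 1$. The right-hand side equals $\rho^m$, which is strictly positive, so its logarithm is defined in both regimes. Moreover $\log \rho \neq 0$ whenever $\rho \neq 1$, so the division is valid. The case $m = 0$ is consistent with the formula, since then the argument of the logarithm is $1$ and the formula returns $0$.

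There is no real obstacle here. The one step to state carefully is the convention in \Cref{Definition: theta(u, v)} that a zero vector against a nonzero vector has angle $\pi/2$, together with the observation that $\ngram S n \neq 0$ exactly for $n \le m$.
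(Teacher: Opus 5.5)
Your proposal is correct and follows the same route as the paper: compute $\dist \rho(S,\epsilon) = \frac{\pi}{2}\sum_{k=1}^{\abs S}\rho^k$ termwise, sum the geometric series, and invert. Your additional checks (positivity of the logarithm's argument, $\log\rho\neq 0$ for $\rho>1$, and the case $\abs S = 0$) make explicit what the paper leaves implicit in ``rearranging.''
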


\begin{proof}
    Let $S \in \alphabet^*$, and write $n = \abs S$. We have
    \[
        \dist \rho (S, \epsilon) = \frac{\pi}{2} \sum_{k = 1}^{n} \rho^k, %= \frac{\pi \rho (1 - \rho^n)}{2\parent{1 - \rho}}.
    \]
    which sums as a geometric series if $\rho \neq 1$. Rearranging, we obtain the desired result.
\end{proof}

We can also recover the local angular distances from $\dist \rho$.

\begin{prop}\label{Proposition: theta n is defined in terms of dist rho}
    Let $\rho > 0$, and let $n \geq 1$. For any $S, T \in \alphabet^*$, there is an explicit recursive formula expressing $\theta_n(S,T)$ in terms of the finite set of distances
    \[
    \{\dist \rho(S,W),\ \dist \rho(T,W): W\in\alphabet^{\leq n}.\}
    \]
    Consequently, the collection of functions $(\theta_n)_{n\ge 1}$ is determined by $\dist \rho$.
\end{prop}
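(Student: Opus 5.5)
The plan is to recover, by induction on $n$, the \emph{normalized} $n$-gram profile of $S$ from distances to the test strings $W \in \alphabet^{\le n}$. From these profiles $\theta_n(S,T)$ then follows by a single dot product.

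The key observation is that for $W \in \alphabet^n$ the vector $\ngram W n$ is the standard basis vector $e_W$. Hence, if $\ngram S n \neq 0$,
\[
\cos\theta_n(S,W) = \frac{\multiplicity S W}{\norm{\ngram S n}} .
\]
In other words, the unit vector $u_n(S) \coloneqq \ngram S n/\norm{\ngram S n}$ has coordinates $\cos\theta_n(S,W)$ for $W \in \alphabet^n$. Once $u_n(S)$ and $u_n(T)$ are known, we get
\[
\theta_n(S,T)=\arccos\bigl(u_n(S)\cdot u_n(T)\bigr)
\]
when both $\ngram S n$ and $\ngram T n$ are nonzero.

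\textbf{Step 1: lengths and degenerate cases.} By \Cref{Lemma: We can recover |S| from dist rho}, $\abs S$ and $\abs T$ are explicit functions of $\dist\rho(S,\epsilon)$ and $\dist\rho(T,\epsilon)$, with $\epsilon \in \alphabet^{\le n}$. Since $\ngram S n = 0$ exactly when $\abs S < n$, we handle the degenerate cases directly from \Cref{Definition: theta(u, v)}. If $\abs S, \abs T < n$ then $\theta_n(S,T) = 0$. If exactly one of them is $< n$ then $\theta_n(S,T) = \pi/2$. So we may assume $\abs S, \abs T \ge n$.

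\textbf{Step 2: peeling off $\theta_n(S,W)$.} Fix $W$ with $\abs W = n \le \abs S$, and write
\[
\dist\rho(S,W)=\sum_{k<n}\rho^k\theta_k(S,W)+\rho^n\theta_n(S,W)+\sum_{k>n}\rho^k\theta_k(S,W).
\]
For $k>n$ we have $\ngram W k=0$, so $\theta_k(S,W)$ equals $\pi/2$ if $k\le\abs S$ and $0$ otherwise. The tail is therefore the explicit sum $\frac{\pi}{2}\sum_{k=n+1}^{\abs S}\rho^k$, which is known from Step 1. For $k<n$ the vector $\ngram W k$ is an explicit nonzero vector computable from $W$. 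By the inductive hypothesis, $u_k(S)$ is already known from distances to strings in $\alphabet^{\le k}$ (note $k < n \le \abs S$, so $\ngram S k \neq 0$). Thus
\[
\theta_k(S,W)=\arccos\bigl(u_k(S)\cdot \ngram W k/\norm{\ngram W k}\bigr)
\]
is known. Solving gives
\[
\theta_n(S,W)=\rho^{-n}\Bigl(\dist\rho(S,W)-\sum_{k<n}\rho^k\theta_k(S,W)-\tfrac{\pi}{2}\sum_{k=n+1}^{\abs S}\rho^k\Bigr),
\]
and hence $u_n(S)$. The base case $n=1$ is the same computation with an empty first sum. The identical argument applies to $T$. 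This is the recursive formula.

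\textbf{Step 3: conclude.} Apply the formula above for $\theta_n(S,T)$. The final claim, that $(\theta_n)_{n \ge 1}$ is determined by $\dist\rho$, is then immediate.

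The main obstacle is the bookkeeping in Step 2. We must make sure that every lower-scale term $\theta_k(S,W)$ with $\abs W = n$ and $k<n$ is genuinely expressible through $u_k(S)$, which only involves distances to strings of length $\le k$. This forces the induction to be organized on the normalized profiles $u_k(S)$ rather than on the angles $\theta_k$ themselves. It also requires care that the zero-vector conventions of \Cref{Definition: theta(u, v)} are used consistently.
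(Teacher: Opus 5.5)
Your proposal is correct and follows the paper's proof: you write $\theta_n(S,T)$ as the arccosine of $\sum_{W\in\alphabet^n}\cos\theta_n(S,W)\cos\theta_n(T,W)$, and you recover each $\theta_n(S,W)$ inductively by subtracting from $\dist\rho(S,W)$ the lower-scale terms and the tail $\frac{\pi}{2}\sum_{k=n+1}^{\abs{S}}\rho^k$ (the paper writes this tail as $\max\parent{0,\dist\rho(\epsilon,S)-\dist\rho(\epsilon,W)}$). Your separate treatment of the degenerate cases in Step 1 is slightly more careful than the paper: when $\abs{S},\abs{T}<n$, the paper's arccosine formula would return $\pi/2$, whereas the correct value is $0$.
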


\begin{proof}
    Let $\rho > 0$. We proceed by induction on $n$. Note that for any $S$, and for $W$ of length $n$, we have 
    \[
    \ngram S n = \sum_{W \in \alphabet^n} \multiplicity S W \ngram W n = \sum_{W \in \alphabet^n} \norm{\ngram S n} \cos \theta_n (S, W) \ngram W n.
    \]
    Thus for any $S, T \in \alphabet^*$ we have
    \[
    \theta_n (S, T) = \arccos\parent{\sum_{W \in \alphabet^n} \cos \theta_n(S, W) \cos \theta_n (T, W)}
    \]
    
    It therefore suffices to evaluate $\theta_n(S, W)$ for $W$ of length $n$. Suppose now that we have performed such an evaluation for all $k < n$. The evaluation set is empty when $n=1$, giving the base case. We may write
    \begin{align*}
    \theta_n(S, W) &= \frac{\dist \rho (S, W) - \sum_{k < n} \rho^k \theta_k(S, W) - \frac{\pi}{2} \sum_{n < k \leq \abs{S}} \rho^k}{\rho^n} \\
    &= \frac{\dist \rho (S, W) - \sum_{k < n} \rho^k \theta_k(S, W) - \max\parent{0, \dist \rho (\epsilon, S) - \dist \rho (\epsilon, W)}}{\rho^n},
    \end{align*}
    and our claim follows.
\end{proof}

\begin{remark}
The proof gives an explicit algorithm: knowing $\dist \rho$, one first recovers lengths via $\dist \rho(\epsilon,\cdot)$, then computes $\theta_1$, then $\theta_2$, and so on.
\end{remark}

\begin{example}
    For $n = 1$, $S, T \in \alphabet^*$ nonempty, and $a \in \alphabet$, \Cref{Proposition: theta n is defined in terms of dist rho} gives us the following decompositions of $\theta_1(S, a)$ and $\theta_1(S, T)$:
    \begin{align*}
        \theta_1(S, a) &= \frac{\dist \rho (S, a) - \parent{\dist \rho (\epsilon, S) - \frac{\pi}{2} \rho}}{\rho}, \\
        \theta_1(S, T) &= \arccos\parent{\sum_{a \in \alphabet} \frac{\parent{\dist \rho (S, a) - \parent{\dist \rho (\epsilon, S) - \frac{\pi}{2} \rho}}\parent{\dist \rho (T, a) - \parent{\dist \rho (\epsilon, T) - \frac{\pi}{2} \rho}}}{\rho^2}}.
    \end{align*}
\end{example}

It turns out that $\dist \rho$ induces the discrete topology on $\alphabet^*$. For the following proposition, we relax the condition $\rho \in (0, 1)$.

\begin{proposition}\label{Proposition: discrete topology on strings}
    Let $\rho>0$. For every $S\in\alphabet^*$ there exists $\delta(S,\rho)>0$ such that
    \[
        T\neq S \ \text{implies} \ \dist\rho(S,T) \ge \delta(S,\rho).
    \]
    In particular, the metric space $(\alphabet^*,\dist\rho)$ has the discrete topology: for
    each $S$ the open ball $B_{\delta(S,\rho)/2}(S)$ equals $\{S\}$.
\end{proposition}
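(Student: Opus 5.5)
The plan is to split according to the length of $T$ relative to $N \coloneqq \abs{S}$ and obtain a uniform positive lower bound in each case. Only the three cases below occur, and the minimum of the three bounds will serve as $\delta(S,\rho)$.

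\emph{Case $\abs{T} > N$.} Put $k = \abs{T}$. Then $\ngram S k = 0$ while $\ngram T k \neq 0$, so $\theta_k(S,T) = \pi/2$. Since $k \ge N+1$, this gives
\[
\dist\rho(S,T) \ge \tfrac{\pi}{2}\rho^{k}.
\]
This bound decays as $k$ grows when $\rho<1$, so a single term is not enough. Instead I will use all the scales $n$ with $N < n \le \abs{T}$: each contributes $\pi/2$, so
\[
\dist\rho(S,T) \ge \tfrac{\pi}{2}\rho^{N+1},
\]
which is uniform in $T$.

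\emph{Case $\abs{T} < N$.} Symmetrically, $\theta_N(S,T) = \pi/2$, so $\dist\rho(S,T) \ge \tfrac{\pi}{2}\rho^{N}$.

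\emph{Case $\abs{T} = N$ and $T \neq S$.} This is exactly the argument in \Cref{Proposition: dist rho is a metric again}. At scale $N$ we have $\ngram S N = \set{S:1}$ and $\ngram T N = \set{T:1}$, which are orthogonal, so $\dist\rho(S,T) \ge \tfrac{\pi}{2}\rho^N$.

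In all three cases we may therefore take
\[
\delta(S,\rho) \coloneqq \tfrac{\pi}{2}\min\parent{\rho^N,\rho^{N+1}} > 0.
\]
This expression covers both $\rho<1$ and $\rho\ge 1$. It also covers $S=\epsilon$, where $N=0$: then every $T\neq S$ falls in the first case and $\dist\rho(S,T)\ge \tfrac{\pi}{2}\rho$. The discreteness statement is then immediate: if $T \in B_{\delta/2}(S)$, then $\dist\rho(S,T) < \delta$, which forces $T = S$. So every singleton is open and the induced topology is discrete.

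The only real subtlety is the first case, where $\abs{T}$ is unbounded. There one must use the scale $n = N+1$ rather than $n = \abs{T}$, because $\rho^{\abs T}\to 0$ as $\abs T\to\infty$ when $\rho<1$. Beyond that the argument needs only the definitions of $\theta$ and $\dist\rho$.
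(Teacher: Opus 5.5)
Your proposal is correct and follows the paper's proof: the same three-way split on $\abs{T}$ versus $\abs{S}$, the same scales ($\abs{S}$ when $\abs{T}\le\abs{S}$, and $\abs{S}+1$ when $\abs{T}>\abs{S}$) at which $\theta=\pi/2$, and the same $\delta(S,\rho)=\frac{\pi}{2}\min\bigl(\rho^{\abs{S}},\rho^{\abs{S}+1}\bigr)$. The only difference is that your first case starts at the scale $\abs{T}$ before switching to $\abs{S}+1$; the paper goes straight to scale $\abs{S}+1$, which is all you need.
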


\begin{proof}
    We consider three cases according to the length of $T$.

    If $\abs{T}<\abs{S}$, then $\ngram {T} {\abs{S}}=0$ while $\ngram {S} {\abs{S}}$ has a single nonzero coordinate, so $\theta_{\abs{S}}(S,T)=\frac{\pi}{2}$ and
    \[
        \dist\rho(S,T)\ \ge\ \rho^{\abs{S}} \frac{\pi}{2}.
    \]

    If $\abs{T}=\abs{S}$ and $T\neq S$, then $\ngram {S} {\abs{S}}$ and $\ngram {T} {\abs{S}}$ are distinct standard basis vectors in $\bbR^{\alphabet^{\abs{S}}}$, hence orthogonal.  Thus again $\theta_{\abs{S}}(S,T)=\frac{\pi}{2}$ and
    \[
        \dist\rho(S,T)\ \ge\ \rho^{\abs{S}} \frac{\pi}{2}.
    \]

    If $\abs{T}>\abs{S}$, then $\ngram S {\abs{S}+1}=0$ while $\ngram T {\abs{S}+1}\neq 0$, so
    $\theta_{\abs{S}+1}(S,T)=\frac{\pi}{2}$ and
    \[
        \dist\rho(S,T)\ \ge\ \rho^{\abs{S}+1} \frac{\pi}{2}.
    \]

    In all cases
    \[
        \dist\rho(S,T)\ \ge\ \frac{\pi}{2}\,\min\bigl(\rho^{\abs{S}},\rho^{\abs{S}+1}\bigr)
        \eqqcolon \delta(S,\rho) > 0,
    \]
    and the claim follows.
\end{proof}

\begin{remark}
    When $\rho \geq 1$, the proof of \Cref{Proposition: discrete topology on strings} implies that $\dist \rho$ is uniformly discrete, and in fact that $B_{\pi/2}(S)$ equals $\{S\}$ for any $S \in \alphabet^*$.
\end{remark}

\Cref{Proposition: discrete topology on strings} is not the full story, however: the topology induced by $\rho$ is only \emph{uniformly} discrete when $\rho \geq 1$. When $0 < \rho < 1$, $\alphabet^*$ has a nontrivial completion.

\begin{definition}\label{Definition: shift invariant measures}
    Let $\alphabet$ be a finite alphabet and let $\alphabet^{\bbN}$ be the one-sided full shift,
    equipped with the product topology, and let $\shift : x_1 x_2 \dots \mapsto x_2 x_3 \dots$ be the left shift.
    (See \cite[Chapter~1]{LindMarcus2021} for the standard shift-space topology, and \cite[\S 1.1]{Shields1996}
    for the one-sided Kolmogorov/measure-theoretic viewpoint; in particular, cylinder sets generate the Borel
    $\sigma$-algebra.)

    A \defi{shift-invariant Borel probability measure} on $\alphabet^{\bbN}$ is a Borel probability
    measure $\mu$ such that $\mu\circ \shift^{-1}=\mu$. We write $\mathcal M_{\shift}(\alphabet^{\bbN})$
    for the set of such measures.

    For $n\ge 1$ and $W\in\alphabet^n$, define the \defi{cylinder set}
    \[
        \mathrm{Cyl}(W)\coloneqq \{x\in\alphabet^{\bbN} : x_1x_2\cdots x_n = W\},
    \]
    and the length-$n$ \defi{cylinder marginal}
    \[
        p_n^\mu(W)\coloneqq \mu(\mathrm{Cyl}(W)).
    \]
\end{definition}

Shift-invariant measures are a natural receptacle for limiting $n$-gram statistics.
A finite string can be viewed as a short sample from a symbolic process, while a
shift-invariant measure records the translation-stable distribution of all finite blocks.
In particular, for $\mu\in\mathcal M_{\shift}(\alphabet^{\bbN})$ the marginals satisfy the
consistency relations
\[
    \sum_{a\in\alphabet} p_{n+1}^\mu(Wa)=p_n^\mu(W)
    \qquad\text{and}\qquad
    \sum_{a\in\alphabet} p_{n+1}^\mu(aW)=p_n^\mu(W),
\]
where the first identity is just disjoint union of cylinders, and the second follows from
$\mu(\mathrm{Cyl}(W))=\mu(\shift^{-1}\mathrm{Cyl}(W))$ together with
$\shift^{-1}\mathrm{Cyl}(W)=\bigsqcup_{a\in\alphabet}\mathrm{Cyl}(aW)$
(see \cite[\S 1.1]{Shields1996}).

We now describe a common extension of $\dist\rho$ to both finite strings and shift-invariant
measures.

\begin{definition}\label{Definition: extended thetas and distance}
    Fix $\rho\in(0,1)$.  For any finite string $S$ of length $L$ and any $n\ge 1$, define its
    empirical length-$n$ block distribution by
    \[
        p_n^S(W)\coloneqq
        \begin{cases}
            \dfrac{\multiplicity S W}{L-n+1}, & L\ge n,\\[6pt]
            0, & L<n,
        \end{cases}
        \qquad (W\in\alphabet^n),
    \]
    and let $u_n(S)$ be the $L^2$-normalization of the vector $p_n^S\in \bbR^{\alphabet^n}_{\ge 0}$
    when it is nonzero, with the convention $u_n(S)=0$ otherwise.

    For $\mu\in\mathcal M_{\shift}(\alphabet^{\bbN})$, define
    \[
        p_n^\mu(W)\coloneqq \mu(\mathrm{Cyl}(W)),
        \qquad
        u_n(\mu)\coloneqq \frac{p_n^\mu}{\norm{p_n^\mu}_2}.
    \]
    (Note that $p_n^\mu$ is a probability vector, hence nonzero for every $n\ge 1$.)

    For $X,Y\in \alphabet^* \sqcup \mathcal M_{\shift}(\alphabet^{\bbN})$ and $n\ge 1$, define
    \[
        \theta_n(X,Y)\coloneqq
        \begin{cases}
            \arccos\!\parent{\langle u_n(X),u_n(Y)\rangle}, & u_n(X),u_n(Y)\neq 0,\\
            0, & u_n(X)=u_n(Y)=0,\\
            \dfrac{\pi}{2}, & \text{exactly one of }u_n(X),u_n(Y)\text{ is }0,
        \end{cases}
    \]
    and set
    \[
        \widehat{\dist \rho}(X,Y)\coloneqq \sum_{n=1}^{\infty}\rho^n \theta_n(X,Y).
    \]
\end{definition}

The next lemma explains why this really extends the finite-string construction.

\begin{lemma}\label{Lemma: extension agrees with original}
    For a finite string $S$ of length $m\ge n$ we have
    \[
        p_n^S = \frac{\ngram S n}{m-n+1}.
    \]
    In particular, $u_n(S)$ agrees with the $L^2$-normalization of $\ngram S n$, and for finite
    strings $S,T$ the angle $\theta_n(S,T)$ from \Cref{Definition: extended thetas and distance}
    coincides with the $n$-gram angle used in \Cref{Definition: weighted angle distance}.
    Consequently, for $S,T\in\alphabet^*$,
    \[
        \widehat{\dist \rho}(S,T) = \dist\rho(S,T).
    \]
\end{lemma}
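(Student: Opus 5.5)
The plan is to unwind the definitions directly and then check the three cases in the definition of $\theta_n$ one at a time. Since $\abs{S}=m\ge n$, the definition of $p_n^S$ gives $p_n^S(W)=\multiplicity S W/(m-n+1)$ for every $W\in\alphabet^n$. This is exactly the $W$-coordinate of $\ngram S n/(m-n+1)$, so the first identity holds coordinatewise.

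For the normalization claim, note that $m-n+1>0$, so $p_n^S$ is a positive scalar multiple of $\ngram S n$. Moreover $\ngram S n\neq 0$: its entries sum to $m-n+1\ge 1$. $L^2$-normalization is invariant under positive rescaling, so $u_n(S)=\ngram S n/\norm{\ngram S n}$. In the remaining case $\abs{S}<n$, both $p_n^S$ and $\ngram S n$ vanish, so $u_n(S)=0$ by convention. Hence $u_n(S)=0$ if and only if $\ngram S n=0$.

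Next I compare the angles for finite $S,T$ and fixed $n$, splitting into the three cases of \Cref{Definition: theta(u, v)}:
\begin{itemize}
    \item If both count vectors are nonzero, then
    \[
    \langle u_n(S),u_n(T)\rangle=\frac{\ngram S n\cdot\ngram T n}{\norm{\ngram S n}\norm{\ngram T n}},
    \]
    so the two arccosines agree.
    \item If both count vectors are zero, both definitions give $0$.
    \item If exactly one is zero, both definitions give $\pi/2$.
\end{itemize}
Thus $\theta_n$ from \Cref{Definition: extended thetas and distance} coincides with the original $\theta_n$ for every $n\ge1$.

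Summing against the weights $\rho^n$ then yields $\widehat{\dist\rho}(S,T)=\dist\rho(S,T)$. Both series are in fact finite sums, since all terms vanish for $n>\max(\abs S,\abs T)$.

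There is no real obstacle here; the only point needing care is the degenerate regime $n>\abs{S}$. There the normalization factor $m-n+1$ is not available, and one must invoke the zero conventions on both sides rather than the displayed identity.
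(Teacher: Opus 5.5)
Your proposal is correct and takes essentially the same route as the paper: you observe that $p_n^S$ is a positive rescaling of $\ngram S n$, conclude that the $L^2$-normalizations agree, deduce that the angles $\theta_n$ coincide, and then sum against the weights $\rho^n$. Your explicit treatment of the degenerate regime $n>\abs{S}$, using the zero conventions on both sides, is slightly more careful than the paper's proof, which leaves that case implicit.
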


\begin{proof}
    If $m\ge n$, then the number of length-$n$ windows in $S$ is $m-n+1$, and each such window
    contributes one to the appropriate coordinate in $\ngram S n$.  Thus
    \[
        \sum_{W\in\alphabet^n} p_n^S(W)
        = \frac{1}{m-n+1}\sum_W \multiplicity SW = 1,
    \]
    so $p_n^S$ is exactly the normalized count vector.  The $L^2$-normalization is therefore
    the same in both constructions, and the definitions of $\theta_n$ coincide on finite strings.
    Summing with weights $\rho^n$ yields the final claim.
\end{proof}

\begin{lemma}\label{Lemma: extended distance is a metric}
    The function $\widehat{\dist \rho}$ is a metric on
    $\alphabet^* \sqcup \mathcal M_{\shift}(\alphabet^{\bbN})$.
\end{lemma}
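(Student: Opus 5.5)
We will check the metric axioms on $X\coloneqq \alphabet^*\sqcup\mathcal M_{\shift}(\alphabet^{\bbN})$ in turn. First, $\widehat{\dist\rho}$ is finite and nonnegative: each $\theta_n$ takes values in $[0,\pi/2]$ because every $u_n(\cdot)$ is either $0$ or a unit vector with nonnegative entries, so
\[
\widehat{\dist\rho}(X,Y)\le \frac{\pi}{2}\cdot\frac{\rho}{1-\rho}.
\]
Symmetry and $\widehat{\dist\rho}(X,X)=0$ are immediate from the definition.

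For the triangle inequality it suffices to show that each $\theta_n$ is a pseudometric on $X$, since a convergent nonnegative combination of pseudometrics is again a pseudometric. We regard $\theta_n$ as the function $\theta$ of \Cref{Definition: theta(u, v)} applied to the vectors $u_n(X)\in \bbR^{\alphabet^n}_{\ge0}$. Indeed, for nonzero vectors, normalizing does not change the angle, and the cases for zero vectors match exactly. Since $\theta$ is a pseudometric on $\bbR^N_{\ge 0}$, the pullback $\theta_n$ is a pseudometric on $X$.

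If one does not want to rely on the earlier unproved remark, the pseudometric property of $\theta$ can be checked directly. On the unit sphere this is the spherical triangle inequality. When some of the vectors are zero, we check cases. Suppose $u=0$ and $v,w\ne0$. Then
\[
\theta(u,w)=\frac{\pi}{2}\le \frac{\pi}{2}+\theta(v,w)=\theta(u,v)+\theta(v,w).
\]
Now suppose $v=0$ and $u,w\neq 0$. Then the right-hand side is $\pi\ge\theta(u,w)$. The remaining cases are handled the same way.

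The main point, which we expect to be the only real obstacle, is definiteness: we must show that $\widehat{\dist\rho}(X,Y)=0$ implies $X=Y$. Since $\rho>0$, a zero distance forces $\theta_n(X,Y)=0$ for all $n$. We split into cases.

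\emph{Two finite strings.} This follows from \Cref{Lemma: extension agrees with original} together with \Cref{Proposition: dist rho is a metric again}.

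\emph{A string $S$ and a measure $\mu$.} Take $n=\abs S+1$. Then $u_n(S)=0$, while $u_n(\mu)\ne0$, so $\theta_n=\pi/2$ and the distance is positive.

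\emph{Two measures $\mu,\nu$.} Here $\theta_n=0$ means $u_n(\mu)=u_n(\nu)$, so $p_n^\mu=c_n\,p_n^\nu$ for some $c_n>0$. Both vectors are probability vectors, since their entries sum to $1$, and therefore $c_n=1$. Hence $\mu$ and $\nu$ agree on every cylinder set $\mathrm{Cyl}(W)$. The cylinder sets, together with $\emptyset$, form a $\pi$-system that generates the Borel $\sigma$-algebra, as noted in \Cref{Definition: shift invariant measures}. By the $\pi$--$\lambda$ (Dynkin) uniqueness theorem, $\mu=\nu$.

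This recovery of the $\ell^1$-normalization from the $\ell^2$-direction is exactly where nonnegativity and the probability normalization are used. It is the step that requires care, but it needs nothing beyond the standard uniqueness theorem for measures.
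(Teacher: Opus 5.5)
Your proposal is correct and follows essentially the same route as the paper. Each $\theta_n$ is the pullback along $u_n$ of the angle metric on the unit sphere with $0$ adjoined at distance $\pi/2$, so the weighted sum is a pseudometric. Definiteness is then checked in the same three cases: finite--finite via \Cref{Lemma: extension agrees with original} and \Cref{Proposition: dist rho is a metric again}; mixed via $u_n(S)=0\neq u_n(\mu)$ for $n>\abs{S}$; and measure--measure by recovering $p_n^\mu$ from its $\ell^2$-direction, since cylinder sets determine the measure. Your explicit zero-vector case check of the triangle inequality and your appeal to the $\pi$--$\lambda$ theorem only make explicit details that the paper leaves implicit.
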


\begin{proof}
    Fix $n$.  On the set $\{0\}\cup S(\bbR^{\alphabet^n})$ (unit sphere plus an extra point $0$),
    the rule defining $\theta_n$ is a metric: it is the spherical metric on the unit sphere, with
    the extra point $0$ placed at distance $\pi/2$ from every unit vector and distance $0$ from itself.
    Therefore $\theta_n$ is a metric on the disjoint union
    $\alphabet^* \sqcup \mathcal M_{\shift}(\alphabet^{\bbN})$ when composed with $u_n(\cdot)$.
    The weighted sum $\widehat{\dist\rho}=\sum_{n\ge 1}\rho^n\theta_n$ is therefore a metric
    provided it separates points.

    Separation for finite--finite follows from \Cref{Lemma: extension agrees with original} and
    \Cref{Proposition: dist rho is a metric again}.

    Separation for measure--measure: if $\widehat{\dist\rho}(\mu,\nu)=0$, then $\theta_n(\mu,\nu)=0$
    for all $n$, hence $u_n(\mu)=u_n(\nu)$ for all $n$. Since $p_n^\mu$ and $p_n^\nu$ are probability vectors,
    $u_n(\mu)=u_n(\nu)$ implies $p_n^\mu=p_n^\nu$ for all $n$ (because $p=u/\|u\|_1$ on the nonnegative orthant).
    Cylinder sets generate the Borel $\sigma$-algebra, so $\mu=\nu$ (see \cite[\S 1.1]{Shields1996}).

    Separation for mixed pairs: if $S$ has length $L$ and $\mu\in\mathcal M_{\shift}(\alphabet^{\bbN})$, then
    $u_n(S)=0$ for all $n>L$ while $u_n(\mu)\neq 0$ for all $n$, so $\theta_n(S,\mu)=\pi/2$ for every $n>L$ and
    \[
        \widehat{\dist \rho}(S,\mu)\ge \frac{\pi}{2}\sum_{n>L}\rho^n>0.
    \]
\end{proof}

\begin{lemma}\label{Lemma: cauchy coordinates}
    Let $(X_j)_{j\ge 1}$ be a $\widehat{\dist \rho}$-Cauchy sequence in
    $\alphabet^* \sqcup \mathcal M_{\shift}(\alphabet^{\bbN})$.
    Then for each fixed $n$ the sequence $(X_j)$ is $\theta_n$-Cauchy, and the vectors
    $u_n(X_j)$ form a Cauchy sequence in $\{0\}\cup S(\bbR^{\alphabet^n})$.
\end{lemma}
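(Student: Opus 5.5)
The plan is to compare a single term of the series with the whole series. Since every summand of $\widehat{\dist\rho}$ is nonnegative, for each fixed $n\ge1$ and all $X,Y\in\alphabet^*\sqcup\mathcal M_{\shift}(\alphabet^{\bbN})$ we have
\[
    \rho^n\,\theta_n(X,Y)\ \le\ \sum_{k\ge1}\rho^k\theta_k(X,Y)\ =\ \widehat{\dist\rho}(X,Y),
\]
so $\theta_n(X,Y)\le \rho^{-n}\widehat{\dist\rho}(X,Y)$. Given $\varepsilon>0$, the Cauchy property of $(X_j)$ yields $J$ with $\widehat{\dist\rho}(X_i,X_j)<\rho^n\varepsilon$ for all $i,j\ge J$. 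Then $\theta_n(X_i,X_j)<\varepsilon$, which proves that $(X_j)$ is $\theta_n$-Cauchy. Because $\rho^{-n}$ is a fixed constant once $n$ is fixed, nothing needs to be uniform in $n$.

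Next I transfer this to the vectors $u_n(X_j)$, viewed as points of the metric space $\{0\}\cup S(\bbR^{\alphabet^n})$. The proof of \Cref{Lemma: extended distance is a metric} already shows that $\theta_n(X,Y)$ equals the metric $\theta$ on this space evaluated at $(u_n(X),u_n(Y))$. In that metric the point $0$ is at distance $\pi/2$ from every unit vector. So once $\theta_n(X_i,X_j)<\pi/2$ for $i,j\ge J$, either all of $u_n(X_j)$ with $j\ge J$ equal $0$, or all of them lie on the unit sphere. In the first case the tail is constant, hence trivially Cauchy. In the second case the tail is Cauchy for the spherical (geodesic) metric $\arccos\langle u,v\rangle$. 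On the unit sphere,
\[
    \norm{u-v}=2\sin(\theta/2)\le\theta ,
\]
so the tail is also Cauchy in the Euclidean norm. Either reading of ``Cauchy in $\{0\}\cup S(\bbR^{\alphabet^n})$'' is therefore covered.

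There is no substantial obstacle. The one point that needs care is the dichotomy between $0$ and the sphere: it guarantees that the Cauchy tail does not alternate between the isolated point $0$ and unit vectors. That dichotomy follows directly from the $\pi/2$ gap once $\varepsilon<\pi/2$ is chosen.
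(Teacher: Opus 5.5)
Your proposal is correct and follows the paper's proof: both bound $\rho^n\theta_n(X_i,X_j)\le\widehat{\dist\rho}(X_i,X_j)$ for fixed $n$ and read $\theta_n$ as the metric on $\{0\}\cup S(\bbR^{\alphabet^n})$ evaluated at $(u_n(X_i),u_n(X_j))$. Your extra remarks go beyond what the paper states and are valid: the $\pi/2$ gap separating $0$ from the sphere, and $\norm{u-v}=2\sin(\theta/2)\le\theta$ giving Euclidean Cauchyness.
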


\begin{proof}
    For any $i,j$ and any $n$ we have
    \[
        \rho^n \theta_n(X_i,X_j) \le \widehat{\dist\rho}(X_i,X_j).
    \]
    Since the right-hand side tends to $0$ as $i,j\to\infty$, we have
    $\theta_n(X_i,X_j)\to 0$ for each $n$, i.e.\ $(u_n(X_j))$ is Cauchy in the metric space
    $\{0\}\cup S(\bbR^{\alphabet^n})$.
\end{proof}

\begin{lemma}\label{Lemma: existence of measure limits}
    Let $(X_j)$ be $\widehat{\dist\rho}$-Cauchy.
    \begin{enumerate}
        \item If infinitely many $X_j$ are finite strings and their lengths are bounded, then
              $(X_j)$ is eventually constant, and hence converges to a finite string.\label{Item: eventually constant Cauchy sequences}
        \item Otherwise, there exists $\mu\in\mathcal M_{\shift}(\alphabet^{\bbN})$ such that
              $\widehat{\dist\rho}(X_j,\mu)\to 0$.\label{Item: other Cauchy sequences}
    \end{enumerate}
\end{lemma}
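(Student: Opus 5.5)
For \eqref{Item: eventually constant Cauchy sequences}, I will use the uniform separation already visible in the proof of \Cref{Proposition: discrete topology on strings}. Suppose infinitely many $X_j$ are finite strings of length at most $N$. Any finite string $S$ with $\abs S\le N$ lies at distance at least $\frac{\pi}{2}\rho^{N+1}$ from every other point of $\alphabet^*\sqcup\mathcal M_\shift(\alphabet^{\bbN})$:
\begin{itemize}
\item for other finite strings, this is the case analysis of that proposition, using $\rho<1$;
\item for measures, it is the mixed-pair bound in \Cref{Lemma: extended distance is a metric}, since $\theta_{N+1}(S,\mu)=\pi/2$.
\end{itemize}
Choose $J$ with $\widehat{\dist\rho}(X_i,X_j)<\frac{\pi}{2}\rho^{N+1}$ for $i,j\ge J$, and pick some $j_0\ge J$ with $X_{j_0}$ such a short string. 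Then every $X_i$ with $i\ge J$ must equal $X_{j_0}$, so the sequence is eventually constant.

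For \eqref{Item: other Cauchy sequences}, I first pass to a tail in which no term is a short finite string. There are two possibilities: only finitely many terms are finite strings, or their lengths are unbounded. In the second case I claim every finite-string term has length tending to infinity along the sequence. To see this, if $\abs{X_i}=L$ and $\abs{X_j}>L$, then $\theta_{L+1}(X_i,X_j)=\pi/2$. The Cauchy property with $\varepsilon=\frac{\pi}{2}\rho^{L+1}$ therefore forbids strings of length $\le L$ far out, once longer strings occur far out, which they do. Consequently, for each fixed $n$, eventually $u_n(X_j)\neq0$. By \Cref{Lemma: cauchy coordinates} the unit vectors $u_n(X_j)$ are Cauchy on the compact sphere, so they converge to some unit vector $u_n$. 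Equivalently, the probability vectors $p_n^{X_j}=u_n(X_j)/\norm{u_n(X_j)}_1$ converge to $p_n\coloneqq u_n/\norm{u_n}_1$, because this map is continuous on the nonnegative part of the sphere.

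Next I check that the family $(p_n)$ satisfies the two consistency relations, so that it defines a shift-invariant measure. For measures $X_j$ the relations hold exactly. For a finite string $S$ of length $L\ge n+1$ they hold up to an error of order $1/(L-n)$. Indeed, $\sum_a\multiplicity S{Wa}$ equals $\multiplicity SW$ minus $1$ if $W$ is a suffix of $S$. Dividing by $L-n$ versus $L-n+1$ gives an $\ell^1$ discrepancy of $O(1/(L-n))$, and the left-extension relation behaves symmetrically. Since lengths tend to infinity, the relations pass to the limit. The Kolmogorov extension theorem, applied via cylinder sets as in \cite[\S 1.1]{Shields1996}, then produces $\mu\in\mathcal M_\shift(\alphabet^{\bbN})$ with $p_n^\mu=p_n$, and hence $u_n(\mu)=u_n$.

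Finally, I show convergence. We have $\theta_n(X_j,\mu)\to0$ for each $n$, and $\theta_n\le\pi/2$. Splitting $\widehat{\dist\rho}(X_j,\mu)$ into $n\le N$ and a tail bounded by $\frac{\pi}{2}\rho^{N+1}/(1-\rho)$ gives $\widehat{\dist\rho}(X_j,\mu)\to0$ by dominated convergence. I expect the main obstacle to be the approximate-consistency step for finite strings, where the boundary effects must be tracked carefully.
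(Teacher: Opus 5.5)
Your proposal is correct and follows essentially the same route as the paper's proof. Short strings give eventual constancy. For the rest, you take limits of $u_n(X_j)$ on the sphere, convert them to probability vectors, pass the approximate consistency relations to the limit, apply Kolmogorov extension to get a shift-invariant $\mu$, and finish with a tail estimate. Your uniform separation bound $\frac{\pi}{2}\rho^{N+1}$, together with your argument that string lengths tend to infinity, handles sequences that interleave strings and measures more carefully than the paper, which tacitly reduces to the pure cases.
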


\begin{proof}
    \eqref{Item: eventually constant Cauchy sequences}
    If $\abs{X_j}$ takes only finitely many values and infinitely many $X_j$ are strings, then
    there are only finitely many possible strings among those terms.  A Cauchy sequence in a finite
    metric space must be eventually constant.

    \eqref{Item: other Cauchy sequences}
    We may therefore assume that either $X_j$ are measures for all large $j$, or that $X_j$ are
    finite strings with $\abs{X_j}\to\infty$.

    Fix $n\ge 1$.  By \Cref{Lemma: cauchy coordinates} the sequence $u_n(X_j)$ converges in
    $\{0\}\cup S(\bbR^{\alphabet^n})$.  In our present case, for each fixed $n$ we eventually have
    $\abs{X_j}\ge n$ (if $X_j$ are strings) or $X_j\in\mathcal M_{\shift}$ (if they are measures), hence
    $u_n(X_j)\neq 0$ for all large $j$.  Therefore the limit lies in the unit sphere; write it as $u_n^\infty$.

    Define probability vectors on $\alphabet^n$ by
    \[
        p_n(W)\coloneqq \frac{u_n^\infty(W)}{\sum_{U\in\alphabet^n}u_n^\infty(U)},\qquad W\in\alphabet^n.
    \]
    Since $u_n^\infty\ge 0$ and $\|u_n^\infty\|_2=1$, the denominator is positive.
    Moreover, the map $u\mapsto u/\|u\|_1$ is continuous on the nonnegative unit sphere, so
    \[
        p_n^{X_j}=\frac{u_n(X_j)}{\|u_n(X_j)\|_1}\longrightarrow p_n
    \]
    for each fixed $n$.

    We claim that $(p_n)_{n\ge 1}$ satisfies the Kolmogorov consistency relations, i.e.
    $\sum_{a}p_{n+1}(Wa)=p_n(W)$ and $\sum_{a}p_{n+1}(aW)=p_n(W)$.
    For $X_j\in\mathcal M_{\shift}$ these equalities hold exactly (as discussed after
    \Cref{Definition: shift invariant measures}), so they pass to the limit immediately.
    For $X_j$ a finite string $S$ of length $L\ge n+1$, the corresponding empirical distributions satisfy
    the approximate identities
    \[
      \Bigl|\sum_{a\in\alphabet}p_{n+1}^{S}(Wa)-p_n^{S}(W)\Bigr|
      \le \frac{1}{L-n+1},
      \qquad
      \Bigl|\sum_{a\in\alphabet}p_{n+1}^{S}(aW)-p_n^{S}(W)\Bigr|
      \le \frac{1}{L-n+1},
    \]
    because the $(n+1)$-windows whose prefix is $W$ account for all $n$-windows equal to $W$
    except possibly the last one, and similarly for suffixes (a boundary-term estimate).
    Since $\abs{X_j}\to\infty$ in the string case, the error tends to $0$ with $j$, and passing to the
    limit gives the exact consistency relations for $(p_n)$.

    By the Kolmogorov extension theorem (see \cite[Theorem~1.1.2]{Shields1996}), there is a unique
    Borel probability measure $\mu$ on $\alphabet^{\bbN}$ such that $\mu(\mathrm{Cyl}(W))=p_n(W)$ for
    all $n$ and all $W\in\alphabet^n$.  The second consistency relation is exactly the stationarity
    (shift-invariance) condition for the one-sided Kolmogorov model (see \cite[\S 1.1]{Shields1996}),
    hence $\mu\in\mathcal M_{\shift}(\alphabet^{\bbN})$.

    Finally, by construction $u_n(\mu)=u_n^\infty$ for each $n$, so $\theta_n(X_j,\mu)\to 0$ for each fixed $n$.
    Since $0\le \theta_n\le \pi/2$, given $\varepsilon>0$ choose $N$ with
    $\frac{\pi}{2}\sum_{n>N}\rho^n<\varepsilon/2$, and then choose $j$ large enough that
    $\sum_{n\le N}\rho^n\theta_n(X_j,\mu)<\varepsilon/2$.  This yields $\widehat{\dist\rho}(X_j,\mu)<\varepsilon$,
    proving $\widehat{\dist\rho}(X_j,\mu)\to 0$.
\end{proof}

\begin{lemma}\label{Lemma: density of finite strings}
    For every $\mu\in\mathcal M_{\shift}(\alphabet^{\bbN})$ and every $\varepsilon>0$, there exists
    a finite string $S\in\alphabet^*$ such that $\widehat{\dist\rho}(S,\mu)<\varepsilon$.
\end{lemma}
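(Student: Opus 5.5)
The plan is to reduce to finitely many scales, and then build the string explicitly as an Eulerian circuit in a de Bruijn graph. Fix $\mu$ and $\varepsilon>0$. Since $0\le\theta_n\le\pi/2$, first choose $N$ with $\frac{\pi}{2}\sum_{n>N}\rho^n<\varepsilon/2$. It then suffices to find $S$ with $\abs S\ge N$ and $\sum_{n\le N}\rho^n\theta_n(S,\mu)<\varepsilon/2$. The map $p\mapsto p/\norm{p}_2$ is continuous on nonzero nonnegative vectors, and $\arccos$ of the inner product is continuous. So it is enough to make $\norm{p_n^S-p_n^\mu}_1<\eta$ for every $n\le N$, for a suitable $\eta$ depending on $\mu$, $N$ and $\varepsilon$. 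Because the boundary-term estimate in the proof of \Cref{Lemma: existence of measure limits} bounds the discrepancy between the marginal of $p_{n+1}^S$ and $p_n^S$ by $O(1/\abs S)$ per coordinate, it will suffice to control the top scale $n=N$ alone, with $\abs S$ large.

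For the top scale I would use the de Bruijn graph $G_N$: its vertices are $\alphabet^{N-1}$, and each $W=w_1\cdots w_N\in\alphabet^N$ is an edge from $w_1\cdots w_{N-1}$ to $w_2\cdots w_N$. The two consistency relations stated after \Cref{Definition: shift invariant measures}, applied at level $N-1$, say exactly that $f(W)\coloneqq p_N^\mu(W)$ is a balanced flow: at every vertex, inflow equals outflow. The set of nonnegative balanced flows of total mass $1$ supported on $\operatorname{supp} f$ is a polytope cut out by linear equations with rational coefficients, so its rational points are dense in it. Pick a rational balanced flow $g$ with the same support and $\norm{g-f}_1<\eta/2$; keeping the support fixed is possible because $f$ lies in the relative interior of the face with that support. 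Clearing denominators and multiplying by a large integer $K$ gives an integer balanced flow $h=KDg$ on the support of $f$.

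Now split the support of $h$ into its connected components $C_1,\dots,C_r$, where $r\le\abs{\alphabet}^N$ is independent of $K$. Each component is balanced and connected, so it has an Eulerian circuit. Reading off the circuit in $C_i$ gives a string $S_i$ whose $N$-gram counts equal $h|_{C_i}$, up to the $N-1$ windows lost or gained when the cyclic word is opened into a linear word. Let $S=S_1S_2\cdots S_r$. Its $N$-gram count vector then differs from $h$ in at most $r(2N)$ coordinates-worth of mass, while its total mass is $\abs S-N+1\approx KD$. Dividing by $\abs S-N+1$ therefore gives $\norm{p_N^S-g}_1=O(rN/(KD))$, which is below $\eta/2$ for $K$ large. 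The lower scales follow from the boundary-term estimate, since $p_n^\mu$ is the marginal of $p_N^\mu$, and the error again tends to zero as $K\to\infty$.

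The main obstacle I expect is the rational approximation step. I need $g$ to remain an exact balanced flow so that the Eulerian circuit exists, and I need the approximation to be close in $\ell^1$. I would handle it by the polyhedral argument above, applied to the flow polytope restricted to $\operatorname{supp}f$. Alternatively, I could approximate $f$ by a rational point of the kernel of the incidence matrix, using density of $\mathbb Q$-points in a rationally defined subspace, and then fix signs by staying within the open support cone. A fallback route avoids this step: use the ergodic decomposition and weak-$*$ density of finite convex combinations, take generic points of the ergodic components from the ergodic theorem, and concatenate long blocks of them in proportion to the weights. That route needs heavier machinery, so I would try the de Bruijn construction first.
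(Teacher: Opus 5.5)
Your proposal is correct and follows essentially the same route as the paper. Both arguments truncate at scale $N$, treat $p_N^\mu$ as a balanced circulation on the de Bruijn graph $G_N$, approximate it by a rational circulation, read a string off an Eulerian circuit, and handle the lower scales via the boundary-term estimates.

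The differences are in the details, and they work in your favor:
\begin{enumerate}
\item You get the rational approximation from the density of rational points in the rationally defined flow polytope, rather than from the paper's cycle decomposition.
\item Your splitting into the $r$ connected components covers a point the paper's proof passes over. Balance alone gives no Eulerian cycle when the support of $p_N^\mu$ is disconnected, for example when $\mu$ is the average of the point masses at $aaa\cdots$ and $bbb\cdots$.
\item Because $r$ stays fixed as $K\to\infty$, your junction error is $O(rN/(KD))$. By contrast, the $N-1$ junction windows per copy in the paper's $S_0^k$ do not become negligible as $k\to\infty$ unless the circuit length is also large.
\end{enumerate}
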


\begin{proof}
    Fix $\mu$ and $\varepsilon>0$.  Choose $N$ such that
    \[
        \frac{\pi}{2}\sum_{n>N}\rho^n < \frac{\varepsilon}{2}.
    \]
    It suffices to construct $S$ so that $\theta_n(S,\mu)$ is small for each $1\le n\le N$.

    Consider the de Bruijn digraph $G_N$ with vertex set $\alphabet^{N-1}$ and edge set $\alphabet^N$,
    where an edge $W=w_1\cdots w_N$ goes from the prefix $w_1\cdots w_{N-1}$ to the suffix $w_2\cdots w_N$.
    The vector $p_N^\mu$ is a nonnegative circulation on $G_N$: for each vertex $V\in\alphabet^{N-1}$,
    shift-invariance implies
    \[
      \sum_{a\in\alphabet}p_N^\mu(Va)=p_{N-1}^\mu(V)=\sum_{a\in\alphabet}p_N^\mu(aV).
    \]

    Decompose this circulation into directed cycles with nonnegative coefficients (a standard flow
    decomposition: iteratively follow edges of positive weight until a directed cycle appears, subtract
    the minimum weight on that cycle, and repeat).  Approximating the finitely many cycle coefficients by
    rationals yields a rational circulation $q_N$ on $\alphabet^N$ that is arbitrarily close to $p_N^\mu$
    in $\ell^2$, and still satisfies the same balance equations (hence both consistency relations).

    Choose a common denominator $M$ so that $Q(W)\coloneqq M q_N(W)\in\bbN$ for all $W\in\alphabet^N$.
    Build the directed multigraph with $Q(W)$ parallel edges labeled by $W$ as above.  Balance of $q_N$
    implies that every vertex has indegree equal to outdegree, so the multigraph has an Eulerian cycle.
    Reading symbols along such a cycle produces a finite string $S_0$ of length $M+N-1$ whose length-$N$
    empirical block distribution is exactly $q_N$, i.e.\ $p_N^{S_0}=q_N$.

    Now take $S\coloneqq S_0^k$ (concatenate $k$ copies) with $k$ large.  For each $1\le n\le N$,
    the boundary-term estimates used in the proof of \Cref{Lemma: existence of measure limits} show that
    $p_n^{S}$ is within $O(1/k)$ (in $\ell^1$, hence in $\ell^2$) of the corresponding stationary marginal of $q_N$,
    and therefore (by choosing $q_N$ close enough to $p_N^\mu$ and then $k$ large enough) we can ensure that
    $p_n^{S}$ is arbitrarily close to $p_n^\mu$ for every $1\le n\le N$.

    Since the map $p\mapsto p/\|p\|_2$ is continuous on the probability simplex (and $\|p\|_2\ge |\alphabet|^{-n/2}$),
    closeness of $p_n^{S}$ to $p_n^\mu$ implies $\theta_n(S,\mu)$ is as small as we like for $1\le n\le N$.
    Hence $\sum_{n\le N}\rho^n\theta_n(S,\mu)<\varepsilon/2$, and the tail $n>N$ contributes at most $\varepsilon/2$
    by the choice of $N$.  Therefore $\widehat{\dist\rho}(S,\mu)<\varepsilon$.
\end{proof}

\begin{theorem}[Completion of $\alphabet^*$]\label{Theorem: Completion of alphabet^*}
    Let $\rho\in(0,1)$, and let $\widehat{\dist\rho}$ be as in
    \Cref{Definition: extended thetas and distance}.  Then:
    \begin{enumerate}
        \item $\widehat{\dist \rho}$ is a metric on
              $\alphabet^* \sqcup \mathcal M_{\shift}(\alphabet^{\bbN})$.\label{Item: widehat dist rho is a metric}
        \item The restriction of $\widehat{\dist \rho}$ to $\alphabet^*$ equals $\dist \rho$.\label{Item: widehat dist rho restricts to dist rho}
        \item The metric space
              $\bigl(\alphabet^* \sqcup \mathcal M_{\shift}(\alphabet^{\bbN}),\widehat{\dist \rho}\bigr)$
              is complete, and $\alphabet^*$ is dense in it.\label{Item: we have a complete space}
    \end{enumerate}
    Consequently, the completion of $\bigl(\alphabet^*,\dist \rho\bigr)$ is naturally identified with
    the disjoint union
    \[
        \alphabet^* \sqcup \mathcal M_{\shift}(\alphabet^{\bbN})
    \]
    equipped with $\widehat{\dist \rho}$.
\end{theorem}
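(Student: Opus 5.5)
The plan is to assemble the theorem from the lemmas just proved, treating each item separately and then invoking the universal property of completions.

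Item \eqref{Item: widehat dist rho is a metric} is exactly \Cref{Lemma: extended distance is a metric}. Item \eqref{Item: widehat dist rho restricts to dist rho} is the final assertion of \Cref{Lemma: extension agrees with original}. One small point needs checking there: that lemma is stated for $m \ge n$. For $n > \abs{S}$ both definitions give $u_n(S)=0$ and $\ngram S n = 0$, so the case conventions for $\theta_n$ coincide at every $n$.

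For item \eqref{Item: we have a complete space}, completeness follows from \Cref{Lemma: existence of measure limits}. Given a $\widehat{\dist\rho}$-Cauchy sequence $(X_j)$, either infinitely many terms are finite strings of bounded length, or we are in the complementary case. I would phrase the case split carefully, since the lemma's hypothesis concerns infinitely many terms.
\begin{itemize}
\item In the first case, I apply part \eqref{Item: eventually constant Cauchy sequences} of that lemma to that subsequence. I also need that the whole sequence converges. If infinitely many terms were measures or long strings, those terms stay at distance at least $\frac{\pi}{2}\sum_{n>L}\rho^n$ from any string of length $\le L$, as in the mixed-pair separation argument. This contradicts the Cauchy property, so the sequence is eventually in a finite set and hence eventually constant.
\item Otherwise, after discarding finitely many terms, each term is a measure or a string whose length tends to infinity along the string terms. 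This mixture is the case handled by part \eqref{Item: other Cauchy sequences}, whose proof only uses that $u_n(X_j)\ne 0$ eventually for each fixed $n$, and that the consistency errors tend to $0$. Both hold for mixtures, since the string terms have $\abs{X_j}\to\infty$ and the measure terms are exactly consistent.
\end{itemize}
In either case a Cauchy sequence has a limit, and the limit of a Cauchy sequence is unique.

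Density of $\alphabet^*$ is \Cref{Lemma: density of finite strings} for measures, and trivial for strings.

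Finally, the identification with the completion is the standard fact that if $Y$ is a complete metric space containing $X$ isometrically as a dense subset, then $Y$ is (uniquely up to isometry fixing $X$) the completion of $X$. Items \eqref{Item: widehat dist rho restricts to dist rho} and \eqref{Item: we have a complete space} provide exactly these hypotheses.

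The main obstacle is the bookkeeping in the completeness step: making sure \Cref{Lemma: existence of measure limits} genuinely covers sequences that interleave measures and strings. If the lemma's proof seems not to cover mixtures cleanly, I would handle it by passing to a subsequence consisting entirely of strings or entirely of measures, obtaining a limit $\mu$ for it, and then using the fact that a Cauchy sequence with a convergent subsequence converges to the same limit.
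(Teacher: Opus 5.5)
Your proposal is correct and matches the paper's proof. The paper likewise assembles the theorem from the metric lemma, the agreement lemma, the two Cauchy-sequence lemmas, the density lemma, and the universal property of completions. Your extra bookkeeping for Cauchy sequences that interleave strings and measures usefully fills in a case split that the paper's existence-of-limits lemma glosses over.

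One minor slip: a string of length $>L$ is only guaranteed to lie at distance at least $\frac{\pi}{2}\rho^{L+1}$ from strings of length $\le L$, not $\frac{\pi}{2}\sum_{n>L}\rho^n$. This does not hurt your argument, since any uniform positive lower bound gives the contradiction.
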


\begin{proof}
    \eqref{Item: widehat dist rho is a metric} is \Cref{Lemma: extended distance is a metric}.
    \eqref{Item: widehat dist rho restricts to dist rho} is \Cref{Lemma: extension agrees with original}.
    Completeness and existence of limits follow from \Cref{Lemma: cauchy coordinates,Lemma: existence of measure limits},
    and density is \Cref{Lemma: density of finite strings}; together these give \eqref{Item: we have a complete space}.
    The final identification with the metric completion follows from the universal property of completions.
\end{proof}

Note that the completion of $\alphabet^*$ is independent of our choice of $\rho$. This is not a coincidence.

\begin{theorem}\label{Theorem: topological equivalence and inequivalence of different rho}
   Let $1 > \sigma > \rho > 0$. The identity map is a uniform homeomorphism from
   $(\alphabet^*, \dist \sigma)$ to $(\alphabet^*, \dist \rho)$.
\end{theorem}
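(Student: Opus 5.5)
We must show that the identity is uniformly continuous in both directions between $(\alphabet^*,\dist\sigma)$ and $(\alphabet^*,\dist\rho)$. Both metrics are weighted sums of the same angles $\theta_n(S,T)\in[0,\pi/2]$ with summable weights, so the approach is a comparison of weighted series. We split each sum into a finite head, where the weights are comparable up to constants, and a tail, which is uniformly small.

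The easy direction is $\dist\rho \to \dist\sigma$ in reverse. Since $\rho<\sigma$, we have $\rho^n=(\rho/\sigma)^n\sigma^n\le\sigma^n$ termwise. Hence $\dist\rho(S,T)\le\dist\sigma(S,T)$ for all $S,T$, and the identity $(\alphabet^*,\dist\sigma)\to(\alphabet^*,\dist\rho)$ is $1$-Lipschitz, hence uniformly continuous.

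For the other direction, fix $\varepsilon>0$ and choose $N$ with $\frac{\pi}{2}\sum_{n>N}\sigma^n=\frac{\pi}{2}\frac{\sigma^{N+1}}{1-\sigma}<\varepsilon/2$; this is possible because $\sigma<1$. For $n\le N$ we have $\sigma^n=(\sigma/\rho)^n\rho^n\le(\sigma/\rho)^N\rho^n$. Therefore
\[
\dist\sigma(S,T)\le (\sigma/\rho)^N\sum_{n\le N}\rho^n\theta_n(S,T)+\frac{\varepsilon}{2}\le(\sigma/\rho)^N\dist\rho(S,T)+\frac{\varepsilon}{2}.
\]
Taking $\delta=\frac{\varepsilon}{2}(\rho/\sigma)^N$, the condition $\dist\rho(S,T)<\delta$ gives $\dist\sigma(S,T)<\varepsilon$. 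This $\delta$ depends only on $\varepsilon$, $\rho$ and $\sigma$, not on $S$ or $T$, which is exactly uniform continuity. Together with the first direction, the identity is a uniform homeomorphism.

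The only step needing care is the tail estimate. It uses that each $\theta_n\le\pi/2$ uniformly in $S,T$, which holds by \Cref{Definition: theta(u, v)}, and that $\sigma<1$. Without $\sigma<1$ the tail cannot be cut off uniformly, which explains the hypothesis $1>\sigma$. Nothing beyond these termwise inequalities is required. As a remark, applying the same estimates to $\widehat{\dist\rho}$ and $\widehat{\dist\sigma}$ shows they are uniformly equivalent on $\alphabet^*\sqcup\mathcal M_{\shift}(\alphabet^{\bbN})$, consistent with the completion in \Cref{Theorem: Completion of alphabet^*} not depending on $\rho$.
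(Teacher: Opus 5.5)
Your proof is correct and follows the paper's argument. The termwise inequality $\rho^n\le\sigma^n$ gives the $1$-Lipschitz direction, and for the reverse direction you split at $N$, bound the tail uniformly by $\frac{\pi}{2}\sum_{n>N}\sigma^n$ using $\theta_n\le\pi/2$ and $\sigma<1$, and control the head by a constant multiple of $\dist\rho(S,T)$. The one cosmetic difference is that you bound the head with the single factor $(\sigma/\rho)^N$, whereas the paper uses $\sum_{n<N}(\sigma/\rho)^n$; both work.
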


\begin{proof}
    For any $S, T \in \alphabet^*$ we have
    \[
    \dist \sigma (S, T) = \sum_{n \geq 1} \sigma^n \theta_n(S, T)
    \geq \sum_{n \geq 1} \rho^n \theta_n(S, T) = \dist \rho (S, T),
    \]
    so the identity map from $(\alphabet^*, \dist \sigma)$ to $(\alphabet^*, \dist \rho)$ is $1$-Lipschitz.

    For the reverse direction, fix $\epsilon > 0$.  For any integer $N\ge 1$,
    \begin{align*}
    \dist \sigma(S, T)
    &= \sum_{1 \leq n < N} \sigma^n \theta_n(S, T) + \sum_{n \geq N} \sigma^n \theta_n(S, T) \\
    &\leq \sum_{1 \leq n < N} \frac{\sigma^n}{\rho^n} \dist \rho (S, T) + \frac{\pi}{2} \cdot \frac{\sigma^N}{1 - \sigma},
    \end{align*}
    since $0\le \theta_n\le \pi/2$ on finite strings (all $u_n$ are nonnegative).
    Choose $N$ so that $\frac{\pi}{2}\frac{\sigma^N}{1-\sigma}<\epsilon/2$, and then choose
    \[
      \delta < \frac{\epsilon}{2 \sum_{1 \leq n < N} (\sigma^n/\rho^n)}.
    \]
    Then $\dist \rho(S,T)<\delta$ implies $\dist \sigma(S,T)<\epsilon$, proving uniform continuity.
    The inverse is continuous as well, hence the identity is a uniform homeomorphism.
\end{proof}

\begin{corollary}\label{Cor: uniform equivalence extends to the completion}
   Let $1>\sigma>\rho>0$. The identity on the common underlying set
   $\alphabet^* \sqcup \mathcal M_{\shift}(\alphabet^{\bbN})$ is a uniform homeomorphism between
   $\bigl(\,\cdot\,,\widehat{\dist \sigma}\bigr)$ and $\bigl(\,\cdot\,,\widehat{\dist \rho}\bigr)$.
   In particular, the induced identification between the two metric completions agrees with
   \Cref{Theorem: Completion of alphabet^*}.
\end{corollary}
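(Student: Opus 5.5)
The plan is to repeat the argument of \Cref{Theorem: topological equivalence and inequivalence of different rho} verbatim on the larger set $X \coloneqq \alphabet^* \sqcup \mathcal M_{\shift}(\alphabet^{\bbN})$. Its proof used only two facts: the formula $\dist\rho = \sum_n \rho^n\theta_n$, and the bound $0\le\theta_n\le\pi/2$. Both hold for $\widehat{\dist\rho}$ on all of $X$. Indeed, every $u_n(\cdot)$ is either $0$ or a unit vector in the nonnegative orthant, so inner products of nonzero ones lie in $[0,1]$. By \Cref{Definition: extended thetas and distance}, each $\theta_n$ therefore takes values in $[0,\pi/2]$, for any $X,Y\in X$.

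\textbf{Step 1: the 1-Lipschitz direction.} For $X,Y\in X$, termwise comparison $\sigma^n\ge\rho^n$ gives $\widehat{\dist\sigma}(X,Y)\ge \widehat{\dist\rho}(X,Y)$. So the identity map $(X,\widehat{\dist\sigma})\to(X,\widehat{\dist\rho})$ is $1$-Lipschitz.

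\textbf{Step 2: uniform continuity of the inverse.} Fix $\epsilon>0$ and choose $N$ with $\frac{\pi}{2}\frac{\sigma^N}{1-\sigma}<\epsilon/2$. For each $n<N$ we have $\theta_n\le\rho^{-n}\widehat{\dist\rho}$, which gives
\[
\widehat{\dist\sigma}(X,Y)\le \Bigl(\sum_{n<N}\sigma^n\rho^{-n}\Bigr)\widehat{\dist\rho}(X,Y)+\frac{\pi}{2}\cdot\frac{\sigma^N}{1-\sigma}.
\]
Choosing $\delta$ exactly as before then finishes this step. The only point needing care is the tail bound, and it is where Step 0 (the range of $\theta_n$) is used. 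Nothing about completeness or density is needed here, so there is no real obstacle.

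\textbf{Step 3: the identification of completions.} By \Cref{Theorem: Completion of alphabet^*}, $\alphabet^*$ is dense in $X$ for both metrics. Since $X$ is complete for both, the uniform homeomorphism of Step 2 is the unique uniformly continuous extension of the identity on $\alphabet^*$ given by \Cref{Theorem: topological equivalence and inequivalence of different rho}. By the universal property of completions, the canonical map between the two abstract completions coincides with the identity on $X$ under the identifications of \Cref{Theorem: Completion of alphabet^*}. The slight subtlety is checking that the identity on $X$ extends the identity on strings, which holds because \Cref{Lemma: extension agrees with original} shows both $\widehat{\dist\rho}$ and $\widehat{\dist\sigma}$ restrict to $\dist\rho$ and $\dist\sigma$ respectively.
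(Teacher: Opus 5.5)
Your proposal is correct and takes the same approach as the paper: the paper's proof simply notes that the estimates from the preceding theorem (comparing $\dist\sigma$ and $\dist\rho$ on $\alphabet^*$) carry over verbatim, because $0\le\theta_n\le\pi/2$ on the whole disjoint union. Your Step 3, which uses density, completeness and uniqueness of continuous extensions, just spells out the ``in particular'' clause about completions that the paper leaves implicit.
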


\begin{proof}
   The same estimates as in \Cref{Theorem: topological equivalence and inequivalence of different rho}
   apply verbatim because $0\le \theta_n\le \pi/2$ on the entire disjoint union by definition.
\end{proof}

\subsection{Isometries}

Fix $\rho \in (0, 1)$. In this subsection, we characterize the isometries of $\dist \rho$.

Let $\pi : \alphabet \to \alphabet$ be a permutation, and extend the action of $\pi$ to $\alphabet^*$ character-wise, so $\pi(s_1 \dots s_n) = \pi(s_1) \dots \pi(s_n)$. The map $\pi : \alphabet^* \to \alphabet^*$ is visibly an isometry. Let $R : \alphabet^* \to \alphabet^*$ be the \defi{reversal map} given by $R(s_1 \dots s_n) = s_n \dots s_1$. The reversal map is an isometry of $\alphabet^*$ that commutes with $\pi$. It is the goal of this section to prove that every isometry of $\alphabet^*$ is either a permutation or the product of a permutation and a reversal. Along the way, we prove three lemmas.

\begin{lemma}\label{Lemma: isometries fix epsilon}
    Let $\rho \in (0,1)$. Let $\phi:\alphabet^*\to\alphabet^*$ be an isometry.
    Then $\phi(\epsilon)=\epsilon$.
\end{lemma}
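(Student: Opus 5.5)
The plan is to find a purely metric property that $\epsilon$ has and no nonempty string has. Since isometries preserve every such property, this forces $\phi(\epsilon)=\epsilon$. I take ``isometry'' to mean a distance-preserving bijection of $\alphabet^*$. Then for every $X\in\alphabet^*$ and $r>0$, $\phi$ maps the closed ball $\overline B_r(X)$ bijectively onto $\overline B_r(\phi(X))$, so the cardinality of closed balls of each radius is an isometry invariant. Fix the radius
\[
r_0 \coloneqq \frac{\pi}{2}\cdot\frac{\rho^2}{1-\rho}.
\]

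\textbf{Step 1: the ball around $\epsilon$ is finite.} By the computation in the proof of \Cref{Lemma: We can recover |S| from dist rho}, for $\abs{T}=k$ we have
\[
\dist\rho(\epsilon,T)=\frac{\pi}{2}\sum_{n=1}^{k}\rho^n=\frac{\pi}{2}\cdot\frac{\rho(1-\rho^k)}{1-\rho}.
\]
So $T\in\overline B_{r_0}(\epsilon)$ exactly when $\rho(1-\rho^k)\le\rho^2$, i.e.\ $1-\rho^k\le\rho$. As $k\to\infty$ the left side tends to $1>\rho$, so only finitely many lengths $k$ qualify. Since $\alphabet$ is finite, there are only finitely many strings of each length, and hence $\overline B_{r_0}(\epsilon)$ is finite.

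\textbf{Step 2: the ball around any nonempty $S$ is infinite.} Let $S\neq\epsilon$ and consider the powers $S^j$ for $j\ge1$. These are pairwise distinct, since their lengths differ. The unigram vector of $S^j$ is $j\ngram S1$, which is a positive multiple of $\ngram S1\neq0$, so $\theta_1(S,S^j)=0$. Using $\theta_n\le\pi/2$ for $n\ge2$ gives
\[
\dist\rho(S,S^j)\le\frac{\pi}{2}\sum_{n\ge2}\rho^n=r_0 .
\]
Thus $\overline B_{r_0}(S)$ contains every $S^j$ and is infinite.

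\textbf{Step 3: conclude.} Suppose $\phi(\epsilon)=S\neq\epsilon$. Then $\overline B_{r_0}(S)=\phi\bigl(\overline B_{r_0}(\epsilon)\bigr)$ would be finite by Step 1, contradicting Step 2. Hence $\phi(\epsilon)=\epsilon$.

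The main obstacle is choosing a radius at which the two balls really behave differently for every $\rho\in(0,1)$. A naive comparison, such as nearest-neighbour distances, fails when $\rho\ge 1/2$. The radius $r_0$ works because the distance from $\epsilon$ to long strings tends to $\frac{\pi}{2}\cdot\frac{\rho}{1-\rho}$, which is strictly larger than $r_0$, while the powers of $S$ all stay within $r_0$ of $S$.

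Surjectivity is used only in Step 3. If ``isometry'' were meant merely as a distance-preserving map, the argument would need adjusting: injectivity alone only gives $\phi(\overline B_{r_0}(\epsilon))\subseteq\overline B_{r_0}(\phi(\epsilon))$, which yields no contradiction. I would therefore state the bijectivity assumption explicitly when writing up the proof.
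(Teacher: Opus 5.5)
Your proof is correct. It rests on the same two computations as the paper's: the distances from $\epsilon$ are exactly $\frac{\pi}{2}\sum_{k\le N}\rho^k$, accumulating only at $\frac{\pi}{2}\cdot\frac{\rho}{1-\rho}$, while for nonempty $S$ the powers $S^N$ satisfy $\theta_1(S,S^N)=0$ and so stay within $r_0=\frac{\pi}{2}\cdot\frac{\rho^2}{1-\rho}$ of $S$.

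The difference is the isometry invariant. The paper compares the sets of distance values $D_X=\{\dist\rho(X,T):T\in\alphabet^*\}$ and their accumulation points. You compare cardinalities of closed balls of radius $r_0$: finite around $\epsilon$, infinite around $S$. Because you count strings rather than values, your version is a little more robust. The paper passes from ``the bounded sequence $\dist\rho(S,S^N)$ has an accumulation point'' to ``$D_S$ has an accumulation point,'' which tacitly requires these distances to take infinitely many distinct values. That requirement does hold, but the paper does not state or prove it. For $k\le\abs{S}$ one has $\ngram{S^N}{k}=N\ngram{S}{k}+(N-1)J_k$, where $J_k$ counts the junction-crossing $k$-grams of $SS$, so each $\theta_k(S,S^N)$ is nondecreasing in $N$. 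The tail $\frac{\pi}{2}\sum_{k=\abs{S}+1}^{N\abs{S}}\rho^k$ is strictly increasing. Your argument never needs any of this.

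Your remark about surjectivity applies equally to the paper: its identity $D_{\phi(X)}=D_X$ also uses that $\phi$ is onto, so both proofs are really about bijective isometries.
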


\begin{proof}
    For any $X\in\alphabet^*$, consider the set of all distances from $X$:
    \[
        D_X \coloneqq \{ \dist \rho(X,T) : T\in\alphabet^* \}.
    \]
    Since $\phi$ is an isometry, we have $D_{\phi(X)}=D_X$.

    We readily compute
    \[
        D_\epsilon
        =
        \left\{
        \frac{\pi}{2}\sum_{k=1}^{N}\rho^k
        : N\in\bbZ_{\geq 0}
        \right\},
    \]
    which is a strictly increasing sequence with unique accumulation point $\frac{\pi}{2}\cdot \frac{\rho}{1-\rho}.$

    Now let $S\neq \epsilon$.
    Consider the sequence of strings $S^N$ for $N\ge 1$.
    Observe that $\ngram {S^N} 1 = N \ngram S 1,$ so $\theta_1(S,S^N)=0$.
    For every $k\ge 2$, we trivially have $\theta_k(S,S^N)\le \pi/2$.
    Hence
    \[
        \dist \rho(S,S^N)
        =
        \sum_{k\ge 1}\rho^k \theta_k(S,S^N)
        \le
        \frac{\pi}{2}\sum_{k\ge 2}\rho^k = \frac{\pi}{2} \cdot \frac{\rho^2}{1 - \rho}.
    \]
    The bounded sequence $\parent{\dist \rho(S,S^N)}_{N\ge 1}$ therefore has an accumulation point no greater than $\frac{\pi}{2}\cdot \frac{\rho}{1-\rho}- \frac{\pi}{2} \cdot \rho$.
    In particular, $D_S$ has an accumulation point strictly smaller than the only accumulation point of $D_\epsilon$. Thus $D_S$ cannot equal $D_\epsilon$ for any $S\neq\epsilon$, and we conclude $\phi(\epsilon)=\epsilon$.
\end{proof}

\begin{lemma}\label{Lemma: isometries fix length}
    Let $\rho \in (0, 1)$. Let $\phi : \alphabet^* \to \alphabet^*$ be an isometry. Then for any $S \in \alphabet^*$, we have $\abs{\phi(S)} = \abs{S}$.
\end{lemma}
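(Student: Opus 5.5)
The plan is to combine \Cref{Lemma: isometries fix epsilon} with \Cref{Lemma: We can recover |S| from dist rho}. The point is that the length of a string is encoded entirely in its distance to the empty string, and an isometry that fixes $\epsilon$ must preserve that distance.

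Concretely, let $\phi$ be an isometry and $S \in \alphabet^*$. By \Cref{Lemma: isometries fix epsilon} we have $\phi(\epsilon)=\epsilon$. Hence
\[
\dist\rho(\phi(S),\epsilon)=\dist\rho(\phi(S),\phi(\epsilon))=\dist\rho(S,\epsilon).
\]
Recall from the proof of \Cref{Lemma: We can recover |S| from dist rho} that
\[
\dist\rho(S,\epsilon)=\frac{\pi}{2}\sum_{k=1}^{\abs S}\rho^k .
\]
This holds because $\ngram S k\neq 0$ exactly when $k\le\abs S$, while $\ngram \epsilon k=0$ for all $k$. Since $\rho>0$, the partial sums $N\mapsto \frac{\pi}{2}\sum_{k=1}^{N}\rho^k$ are strictly increasing in $N$, so this function is injective on $\bbZ_{\ge 0}$. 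Equivalently, the explicit formula of \Cref{Lemma: We can recover |S| from dist rho} expresses $\abs S$ as a function of $\dist\rho(S,\epsilon)$ alone. Therefore $\abs{\phi(S)}=\abs S$.

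The only substantive input is \Cref{Lemma: isometries fix epsilon}, which is already proved. Everything after that is a one-line injectivity check, so I expect no real obstacle. The one point to verify is that the recovery formula is applied to $\dist\rho(\cdot,\epsilon)$ with the same $\rho\in(0,1)$. In that case the logarithmic expression is well defined, because $1-\frac{2(1-\rho)}{\pi\rho}\dist\rho(S,\epsilon)=\rho^{\abs S}>0$.
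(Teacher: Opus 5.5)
Your proposal is correct and follows the paper's own argument: the paper proves this lemma in one line as ``immediate from'' \Cref{Lemma: We can recover |S| from dist rho} and \Cref{Lemma: isometries fix epsilon}. You simply spell that combination out, via $\dist\rho(\phi(S),\epsilon)=\dist\rho(S,\epsilon)$ and the strict monotonicity of $N\mapsto\frac{\pi}{2}\sum_{k=1}^N\rho^k$.
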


\begin{proof}
    The claim is immediate from \Cref{Lemma: We can recover |S| from dist rho} and \Cref{Lemma: isometries fix epsilon}.
\end{proof}

\begin{lemma}\label{Lemma: isometries fix theta n}
    Let $\rho \in (0, 1)$. Let $S, T \in \alphabet^*$, let $n \in \bbN$, and let $\phi : \alphabet^* \to \alphabet^*$ be an isometry. We have $\theta_n(S, T) = \theta_n(\phi(S), \phi(T))$.
\end{lemma}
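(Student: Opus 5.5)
The plan is to make \Cref{Proposition: theta n is defined in terms of dist rho} invariant under $\phi$. That proposition expresses $\theta_n$ recursively through $\dist\rho$-values against short test strings $W$. So it suffices to show that $\phi$ preserves every ingredient of the recursion and permutes the test strings. We argue by strong induction on $n$, with inductive hypothesis: $\theta_k(S,T)=\theta_k(\phi(S),\phi(T))$ for all $k<n$ and all $S,T\in\alphabet^*$.

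First, we show $\phi$ restricts to a bijection of $\alphabet^k$ for every $k$. By \Cref{Lemma: isometries fix length}, $\phi$ maps $\alphabet^k$ into $\alphabet^k$. An isometry is injective, and $\alphabet^k$ is finite, so this map is a bijection. In particular $\phi$ permutes $\alphabet^n$, and by \Cref{Lemma: isometries fix epsilon} we also have $\dist\rho(\epsilon,\phi(X))=\dist\rho(\epsilon,X)$ for every $X$.

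Next, fix $n$ and consider $\theta_n(S,W)$ with $\abs W=n$. The proof of \Cref{Proposition: theta n is defined in terms of dist rho} gives
\[
\rho^n\theta_n(S,W)=\dist\rho(S,W)-\sum_{k<n}\rho^k\theta_k(S,W)-\max\bigl(0,\dist\rho(\epsilon,S)-\dist\rho(\epsilon,W)\bigr).
\]
Each term on the right is unchanged when $(S,W)$ is replaced by $(\phi(S),\phi(W))$:
\begin{itemize}
\item the first term because $\phi$ is an isometry;
\item the sum over $k<n$ by the inductive hypothesis;
\item the last term by the identity $\dist\rho(\epsilon,\phi(X))=\dist\rho(\epsilon,X)$.
\end{itemize}
Hence $\theta_n(S,W)=\theta_n(\phi(S),\phi(W))$ whenever $\abs W=n$. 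The base case $n=1$ is the same computation with an empty sum.

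Finally, take arbitrary $S,T$. If $\abs S<n$ or $\abs T<n$, then $\theta_n(S,T)$ depends only on whether each of $\ngram Sn$ and $\ngram Tn$ vanishes, that is, only on the lengths. It is therefore preserved by \Cref{Lemma: isometries fix length}. Otherwise both $n$-gram vectors are nonzero, and
\[
\theta_n(S,T)=\arccos\Bigl(\sum_{W\in\alphabet^n}\cos\theta_n(S,W)\cos\theta_n(T,W)\Bigr).
\]
Apply the previous step to each summand, then reindex the sum via the bijection $W\mapsto\phi(W)$ of $\alphabet^n$. The right-hand side becomes the same expression for $(\phi(S),\phi(T))$, which completes the induction.

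The main obstacle is this last identity. It holds because the unit vectors $\ngram Wn$ for $W\in\alphabet^n$ form an orthonormal basis, and $\cos\theta_n(S,W)$ is exactly the $W$-coordinate of $\ngram Sn/\norm{\ngram Sn}$. One must check that this really requires $\ngram Sn,\ngram Tn\neq 0$, which is why the short-string cases are handled separately.
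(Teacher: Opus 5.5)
Your proof is correct and follows the paper's route. The paper declares the lemma immediate from \Cref{Proposition: theta n is defined in terms of dist rho} together with \Cref{Lemma: isometries fix epsilon} and \Cref{Lemma: isometries fix length}, and your strong induction on $n$ (including the reindexing of $\alphabet^n$ via the bijection $W\mapsto\phi(W)$) is exactly the unpacking of that claim. Your separate handling of the case $\abs S<n$ or $\abs T<n$ is a genuine improvement: when both strings are shorter than $n$, the arccos formula in that proposition gives $\pi/2$ rather than the correct value $0$.
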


\begin{proof}
    This is immediate from \Cref{Proposition: theta n is defined in terms of dist rho}, \Cref{Lemma: isometries fix epsilon}, and \Cref{Lemma: isometries fix length}.
\end{proof}

\begin{theorem}\label{Theorem: isometries of dist rho}
    Let $\rho \in (0, 1)$. Every isometry of $\alphabet^*$ is either a character-permutation, or the composition of a character-permutation with reversal.
\end{theorem}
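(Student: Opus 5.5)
Given an isometry $\phi$, the plan is to show it is length-preserving and $\theta_n$-preserving, then pin down its action on single characters and propagate by induction on length. By \Cref{Lemma: isometries fix length} and \Cref{Lemma: isometries fix theta n}, $\phi$ maps $\alphabet^n$ to itself and preserves every $\theta_k$. Since $\phi$ is a bijection (isometries of a metric space onto itself in this context; if only injective, length preservation plus finiteness of $\alphabet^n$ still gives bijectivity on each $\alphabet^n$), it restricts to a permutation $\pi$ of $\alphabet=\alphabet^1$. Replacing $\phi$ by $\pi^{-1}\circ\phi$, we may assume $\phi$ fixes every character. The goal then becomes to show that $\phi$ is the identity or the reversal $R$.

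The key invariant is that $\theta_1(S,a)$ for $a\in\alphabet$ recovers the normalized unigram vector of $S$. Indeed, $\cos\theta_1(S,a)=c(S,a)/\norm{\ngram S1}$, and together with the length this determines $\ngram S 1$ exactly. More generally, for $W$ of length $k\le\abs S$, $\cos\theta_k(S,W)=c(S,W)/\norm{\ngram Sk}$, so $\ngram{S}{k}$ is determined up to scale. The scale is fixed because the entries sum to $\abs S-k+1$. Hence, if $\phi$ fixes all strings of length $<n$ (or fixes them up to a global reversal), then for $\abs S=n$ the string $\phi(S)$ has the same $k$-gram multiset as $S$ (or as $R(S)$) for every $k<n$. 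In particular this holds for $k=n-1$: $S$ and $\phi(S)$ have the same two $(n-1)$-grams, the prefix and the suffix. So $\phi(S)\in\{S,R'\}$, where $R'$ is the other string whose prefix and suffix of length $n-1$ are swapped. Such an $R'$ exists only when the prefix and suffix overlap compatibly, which essentially forces $S$ to be close to a palindrome or to be periodic-like.

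The induction runs as follows. At length $2$, $\phi(ab)\in\{ab,ba\}$, and here the choice of reversal must be made globally. I would first show consistency: if $\phi(ab)=ba$ for one pair $a\ne b$, then the same holds for all pairs. I would attempt this using $\theta_2$ between length-$3$ strings such as $abc$, whose $2$-grams must be preserved, which links pairs sharing a letter. Then compose with $R$ to assume $\phi$ fixes $\alphabet^{\le2}$. For the inductive step, assume $\phi$ fixes $\alphabet^{<n}$ with $n\ge3$. Take $S$ of length $n$; then $\phi(S)$ has the same $(n-1)$-gram multiset as $S$, and also $\theta_n(S,T)=\theta_n(\phi S,\phi T)$ for all $T$. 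The remaining ambiguity is that two length-$n$ strings can share all $k$-gram counts for $k<n$, for example $S=xWy$ versus some rearrangement. To kill this, I would compare with longer strings instead of shorter ones. Pick $T=Sz$ or $zS$ of length $n+1$; the values $\theta_n(T,\cdot)$ on length-$n$ strings give the $n$-gram profile of $\phi(T)$, which contains $\phi(S)$. Meanwhile, $\phi(T)$ has the same $n$-gram counts as $T$ as a vector indexed by the images. Rather than this circularity, it is cleaner to use the statement ``$\phi$ preserves $\theta_n$ against all strings of length $n$'', which forces $\phi$ to be induced on $n$-gram vectors by a permutation of $\alphabet^n$ that is compatible with all $\ngram{T}{n}$. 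One then shows the only such permutations compatible with the fixed lower levels are the identity (given the normalization at length $2$).

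The main obstacle is exactly this last step: excluding exotic permutations of $\alphabet^n$ that preserve the $(n-1)$-gram structure, and showing that the reversal choice made at length $2$ propagates consistently. I would attack it via the de Bruijn overlap structure. A length-$n$ string is determined by its prefix and suffix $(n-1)$-grams, which are fixed, together with the order in which they occur. That order is detected by $\theta_{n}$ against length-$(n+1)$ strings $T$ in which $S$ occurs at a known position, using the inductive information at the next level in a simultaneous induction on the pair (length $n$, length $n+1$).
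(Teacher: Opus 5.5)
\textbf{Gap: the inductive step is not proved.} Your reduction to $\phi$ fixing $\alphabet$ is sound and matches the paper, and so is the length-$2$ consistency argument via $abc$. So is your key invariant: once $\phi$ fixes $\alphabet^{<n}$, the equality $\theta_k(\phi(S),W)=\theta_k(S,W)$ for $W\in\alphabet^k$ forces $\ngram{\phi(S)}{k}=\ngram{S}{k}$ for every $k<n$. But you leave the inductive step as the ``main obstacle'' and give only an outline. You suggest a simultaneous induction through length-$(n+1)$ strings, or a classification of permutations of $\alphabet^n$ compatible with all $\ngram{T}{n}$, and you partly acknowledge the first is circular. Neither is carried out, so the proof is incomplete as written. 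Moreover, the premise behind that detour is false. For $n\ge 3$, no two distinct strings of length $n$ have the same $k$-gram vectors for all $k<n$. There is no exotic permutation to exclude, and your invariant already finishes the proof.

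\textbf{The missing step: identify your candidate $R'$ exactly.} Suppose $\abs{S}=n\ge 3$, $S'\neq S$, and $\ngram{S'}{n-1}=\ngram{S}{n-1}$. Let $T_1=s_1\cdots s_{n-1}$ and $T_2=s_2\cdots s_n$ be the prefix and suffix of $S$. They are distinct, since $T_1=T_2$ means $S=a^n$, which forces $S'=a^n$. So $S'$ must have prefix $T_2$ and suffix $T_1$. Matching the overlap gives $s_{i+2}=s_i$ for $1\le i\le n-2$. Hence $S=abab\cdots$ and $S'=baba\cdots$ with $a\neq b$.
\begin{itemize}
\item For odd $n$, they differ at scale $1$: the letter $a$ occurs $(n+1)/2$ versus $(n-1)/2$ times.
\item For even $n\ge 4$, they differ at scale $2$: the bigram $ab$ occurs $n/2$ versus $n/2-1$ times.
\end{itemize}
Either way the distinguishing scale is below $n$, so $\phi(S)=S$ and the induction closes without looking at longer strings. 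The only length where the lower scales fail to separate the two candidates is $n=2$ ($ab$ versus $ba$). That is exactly the global reversal choice you already fix.

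For comparison, the paper's inductive step uses the same prefix/suffix observation and simply asserts that the two $N$-grams determine $\phi(S)$. Your worry about the swapped candidate is legitimate, but it is settled by the lower-order grams as above.
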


\begin{proof}
    Let $\rho \in (0, 1)$. If $\abs{\alphabet} = 1$, then the claim is trivial and immediate from \Cref{Lemma: isometries fix length}. So suppose $\abs{\alphabet} \geq 2$.

    Let $\phi$ be an isometry of $\alphabet^*$ under $\dist \rho$. By \Cref{Lemma: isometries fix length}, $\phi$ induces a permutation $\pi$ on $\alphabet$; extending $\pi$ to $\alphabet^*$ character-wise induces another isometry on $\alphabet^*$, and $\pi\inv \phi$ is an isometry which fixes $\alphabet$. Without loss of generality, we now assume that $\phi$ fixes $\alphabet$. We wish to show that $\phi$ is either the identity map or the reversal map $R$.

    We claim at least that $\phi(a^k) = a^k$ for any character $a \in \alphabet$ and any $k \geq 1$. Indeed, by \Cref{Proposition: theta n is defined in terms of dist rho}, we see 
    \[
    \theta_1(\phi(a^k), a) = \theta_1(\phi(a^k), \phi(a)) = \theta_1(a^k, a) = 0,
    \]
    but $a^k$ is the only length $k$ string with the property $\theta_1(a^k, a) = 0$, so \Cref{Lemma: isometries fix length} implies $\phi(a^k) = a^k$.

    Let $a, b \in \alphabet$ be distinct characters. By \Cref{Lemma: isometries fix theta n}, $\phi$ preserves $\theta_1$, so $\theta_1(\phi(ab), a) = \theta_1(ab, a)$ and $\theta_1(\phi(ab), b) = \theta_1(ab, b)$. As $\abs{\phi(ab)} = 2$, we conclude that $\ngram {\phi(ab)} 1 = \ngram {ab} 1$, so $\phi(ab) \in \set{ab, ba}$. For $a \neq b$, define 
    \[
    \chi(a, b) \coloneqq \begin{cases} 1 & \text{if} \ \phi(ab) = ab, \\
    -1 & \text{if} \ \phi(ab) = ba.
    \end{cases}
    \]
    Note that by bijectivity, $\chi(a, b) = \chi(b, a)$. We claim that $\chi(a, b)$ is independent of our choice of $a \neq b$. If $\abs{\alphabet} = 2$, this claim is trivial. Otherwise, let $a, b, c \in \alphabet$ be distinct characters, and consider the string $abc$, with bigrams $ab$ and $bc$. We have $\theta_2(abc, ab), \theta_2(abc, bc) < \frac{\pi}{2}$ while $\theta_2(abc, xy) = \frac{\pi}{2}$ for all other bigrams $xy$, so $\set{\phi(ab), \phi(bc)}$ must be the bigrams of $\phi(abc)$. As a length 3 string has exactly two bigrams, we see $\set{\phi(ab), \phi(bc)}$ must be the bigrams of some three-letter string. But this implies $\chi(a, b) = \chi(b, c)$ as desired, and there must exist a $\chi \in \set{\pm 1}$ such that $\phi(ab) = \begin{cases} ab & \text{if} \ \chi = 1, \\ ba & \text{if} \ \chi = -1 \end{cases}$ independent of $a$ and $b$.
    
    Suppose by induction that we are given $N$ and $\chi \in \set{\pm 1}$ such that $\phi$ agrees with $R$ on $\alphabet^{\leq N}$ if $\chi = -1$, and $\phi$ agrees with $id$ on $\alphabet^{\leq N}$ if $\chi = 1$. Let $S = s_1 \dots s_{N+1}$ be a string of length $N + 1$ in $\alphabet^*$, and let $T_1 = s_1 \dots s_{N}$ and $T_2 = s_2 \dots s_{N+1}$ be the two $N$-grams of $S$. By $\theta_N$-invariance, we see $\theta_N(\phi(S), \phi(T_1)), \theta_N(\phi(S), \phi(T_2)) < \frac{\pi}{2}$, so $\phi(T_1)$ and $\phi(T_2)$ are the two $N$-grams of $\phi(S)$. Now if $\chi = -1$, then $\phi(T_1) = R(T_1)$ and $\phi(T_2) = R(T_2)$, which uniquely determines $\phi(S) = R(S)$. Likewise, if $\chi = 1$, then $\phi(T_1) = T_1$ and $\phi(T_2) = T_2$, which uniquely determines $\phi(S) = S$. The claim follows.
\end{proof}

\section{Appendix: Tables and plots}

\subsection{Tables and plots}

For completeness we include the main numeric summaries and plots generated from \texttt{results.csv} by our analysis script.

\begin{table}[t]
\centering
\begin{tabular}{l r c c c c}
\toprule
Dataset & $N$ & \textbf{Edit} & \textbf{$k$-gram angle} & \textbf{JS $k$-gram} & \textbf{Weighted angle} \\
\midrule
\texttt{splice} & 3190 & 0.0003/0.0009 & 0.0003/0.0009 & 0.0003/0.0009 & 0.0003/0.0009 \\
\texttt{strseq} & 1624 & 0.8825/0.8970 & 0.3293/0.6725 & 0.5394/0.7714 & 0.5060/0.7527 \\
\texttt{ucsc\_trf} & 8905 & 0.0003/0.0210 & 0.4390/0.8421 & 0.4309/0.8378 & 0.4596/0.8632 \\
\end{tabular}
\caption{Best \emph{ARI/NMI} achieved by each distance family on each dataset (max over the family parameter: $\rho$ for weighted angle, $k$ for $k$-gram methods).}
\label{tab:best_by_family}
\end{table}

\begin{table}[t]
\centering
\begin{tabular}{l l r r r}
\toprule
Dataset & Metric & Best $\rho$ & Best value & Std./Range \\
\midrule
\texttt{splice} & \texttt{ari} & 0.6 & 0.0003 & 0.0003/0.0008 \\
\texttt{splice} & \texttt{nmi} & 0.6 & 0.0009 & 0.0002/0.0004 \\
\texttt{splice} & \texttt{silhouette} & 0.1 & -1.0000 & 0.0000/0.0000 \\
\texttt{strseq} & \texttt{ari} & 0.6 & 0.5060 & 0.1348/0.4105 \\
\texttt{strseq} & \texttt{nmi} & 0.6 & 0.7527 & 0.1066/0.3083 \\
\texttt{strseq} & \texttt{silhouette} & 0.1 & 0.8387 & 0.0722/0.2579 \\
\texttt{ucsc\_trf} & \texttt{ari} & 0.6 & 0.4596 & 0.0817/0.2543 \\
\texttt{ucsc\_trf} & \texttt{nmi} & 1.0 & 0.8643 & 0.0442/0.1463 \\
\texttt{ucsc\_trf} & \texttt{silhouette} & 0.9 & 0.9527 & 0.0819/0.2611 \\
\end{tabular}
\caption{Sensitivity of the weighted angle distance to $\rho$: for each dataset and metric we report the best $\rho$ (maximizing the metric) and the variability across the sweep $\rho\in\{0.1,0.2,\dots,1.0\}$.}
\label{tab:rho_stability}
\end{table}

\begin{table}[t]
\centering
\begin{tabular}{l r r r}
\toprule
Dataset & $\rho_s(\text{ARI},\text{NMI})$ & $\rho_s(\text{ARI},\text{sil})$ & $\rho_s(\text{NMI},\text{sil})$ \\
\midrule
\texttt{splice} & 0.937 & -- & -- \\
\texttt{strseq} & 0.998 & -0.439 & -0.458 \\
\texttt{ucsc\_trf} & 0.974 & 0.136 & 0.097 \\
\end{tabular}
\caption{Spearman rank correlation $\rho_s$ between evaluation metrics across the 19 distances tested (per dataset).}
\label{tab:rank_consistency}
\end{table}

\begin{table}[t]
\centering
\begin{tabular}{l l r}
\toprule
Dataset & Family & Median pairwise time \\
\midrule
\texttt{splice} & \texttt{edit} & 2.5s \\
\texttt{splice} & \texttt{kgram\_angle} & 1.2m \\
\texttt{splice} & \texttt{js\_kgram} & 3.2m \\
\texttt{splice} & \texttt{weighted\_angle} & 19.3m \\
\addlinespace
\texttt{strseq} & \texttt{edit} & 2.1s \\
\texttt{strseq} & \texttt{kgram\_angle} & 39.0s \\
\texttt{strseq} & \texttt{js\_kgram} & 1.2m \\
\texttt{strseq} & \texttt{weighted\_angle} & 19.7m \\
\addlinespace
\texttt{ucsc\_trf} & \texttt{edit} & 25.9s \\
\texttt{ucsc\_trf} & \texttt{kgram\_angle} & 9.1m \\
\texttt{ucsc\_trf} & \texttt{js\_kgram} & 17.4m \\
\texttt{ucsc\_trf} & \texttt{weighted\_angle} & 5.26h \\
\addlinespace
\end{tabular}
\caption{Median wall-clock time to compute the full pairwise distance matrix for each family on each dataset (pure Python implementation; no caching or parallelism).}
\label{tab:runtime}
\end{table}

\begin{figure}[t]
\centering
\includegraphics[width=0.85\linewidth]{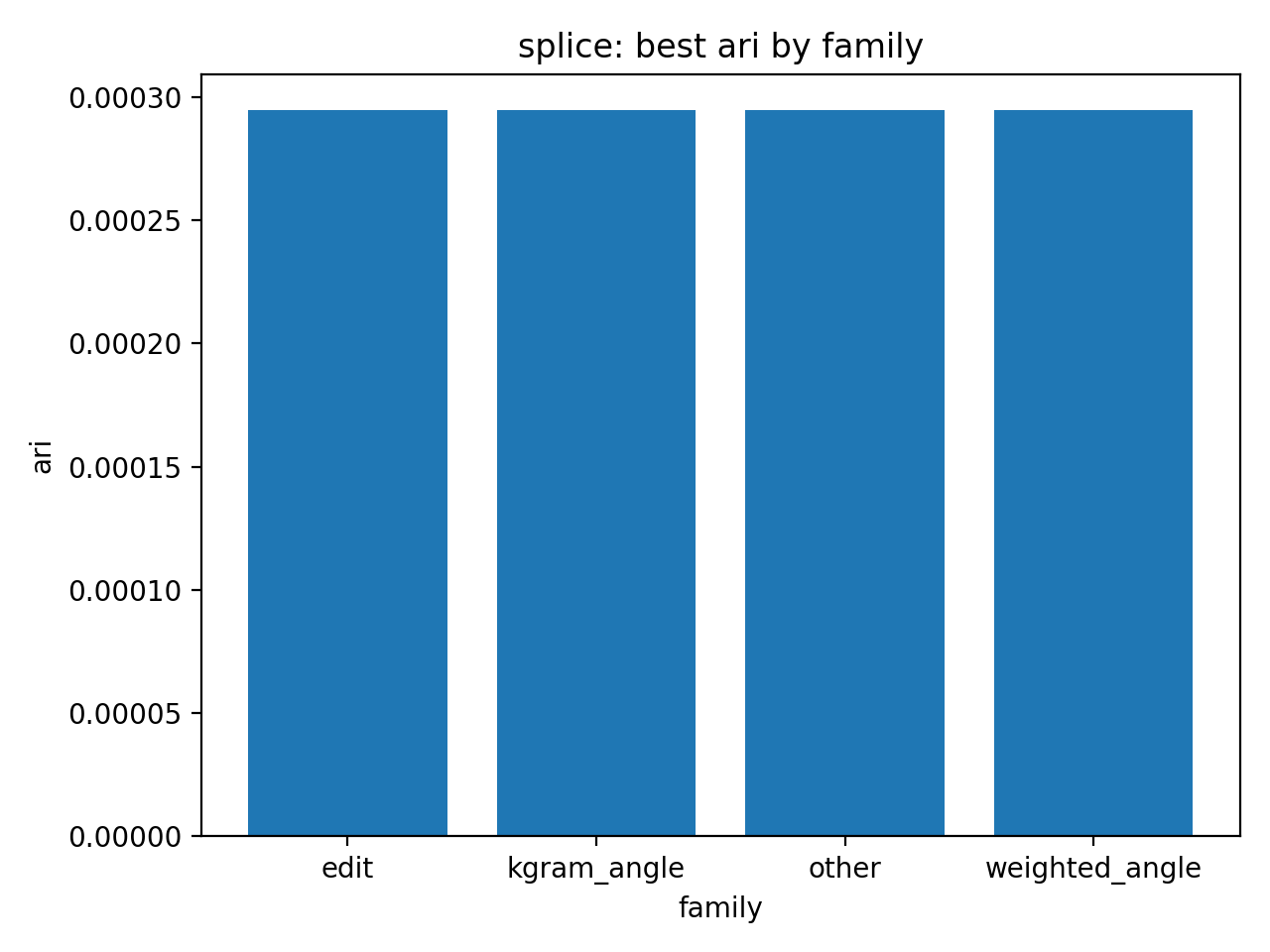}
\caption{Best ARI achieved by each distance family on \texttt{splice}.}
\label{fig:splice:best_ari_family}
\end{figure}

\begin{figure}[t]
\centering
\includegraphics[width=0.85\linewidth]{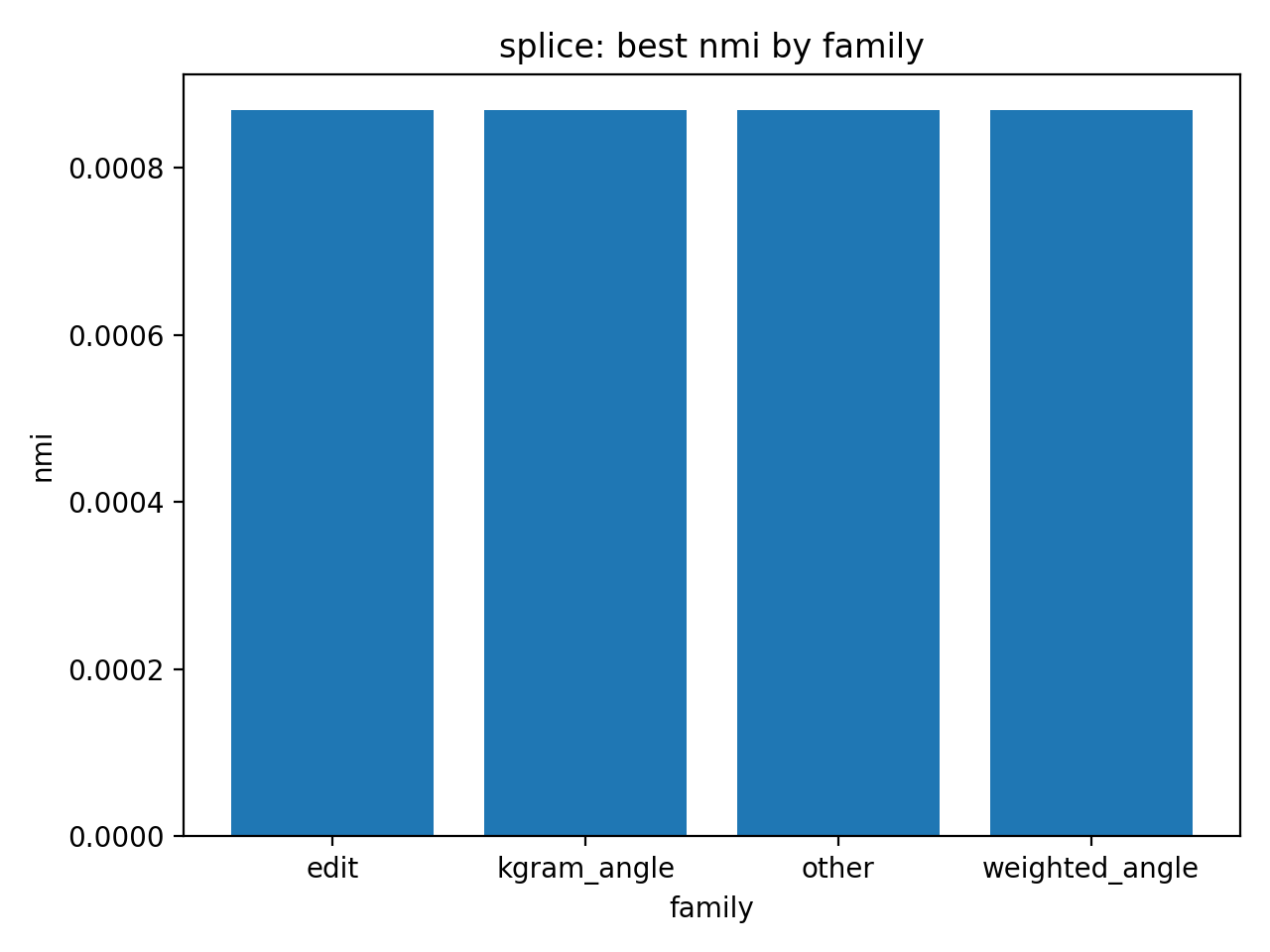}
\caption{Best NMI achieved by each distance family on \texttt{splice}.}
\label{fig:splice:best_nmi_family}
\end{figure}

\begin{figure}[t]
\centering
\includegraphics[width=0.85\linewidth]{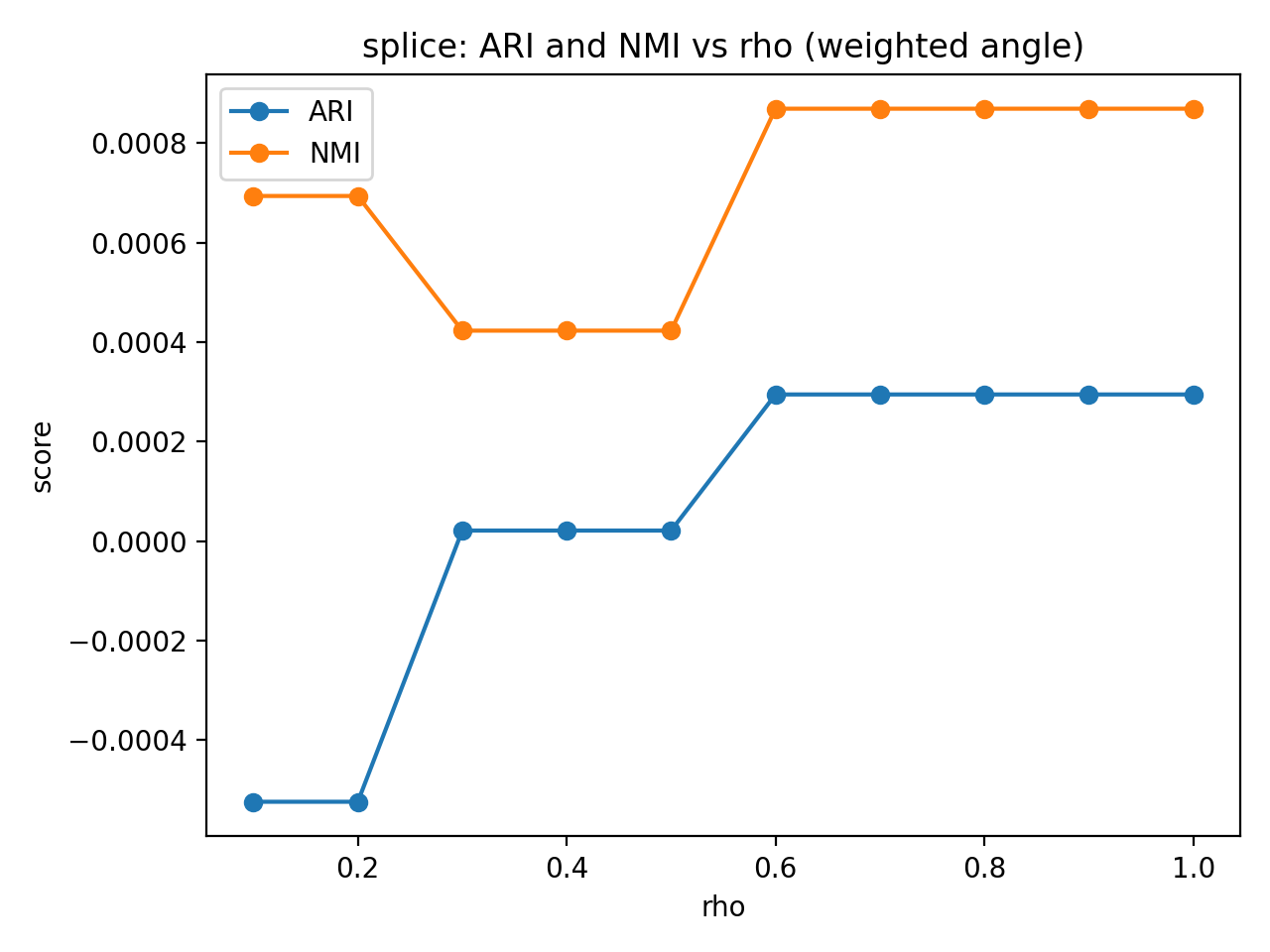}
\caption{ARI and NMI versus $\rho$ for the weighted angle distance on \texttt{splice}.}
\label{fig:splice:wa_ari_nmi_rho}
\end{figure}

\begin{figure}[t]
\centering
\includegraphics[width=0.85\linewidth]{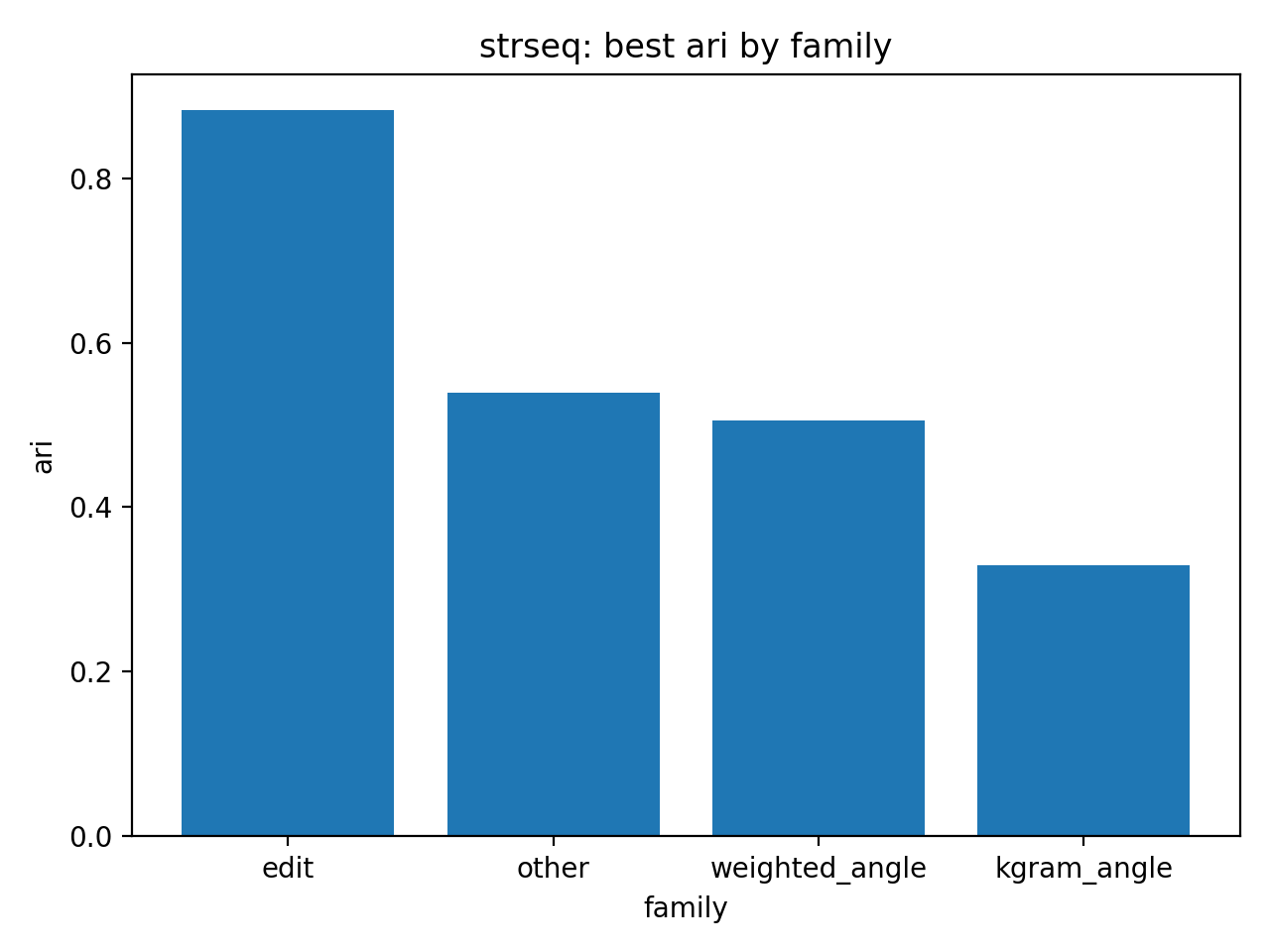}
\caption{Best ARI achieved by each distance family on \texttt{strseq}.}
\label{fig:strseq:best_ari_family}
\end{figure}

\begin{figure}[t]
\centering
\includegraphics[width=0.85\linewidth]{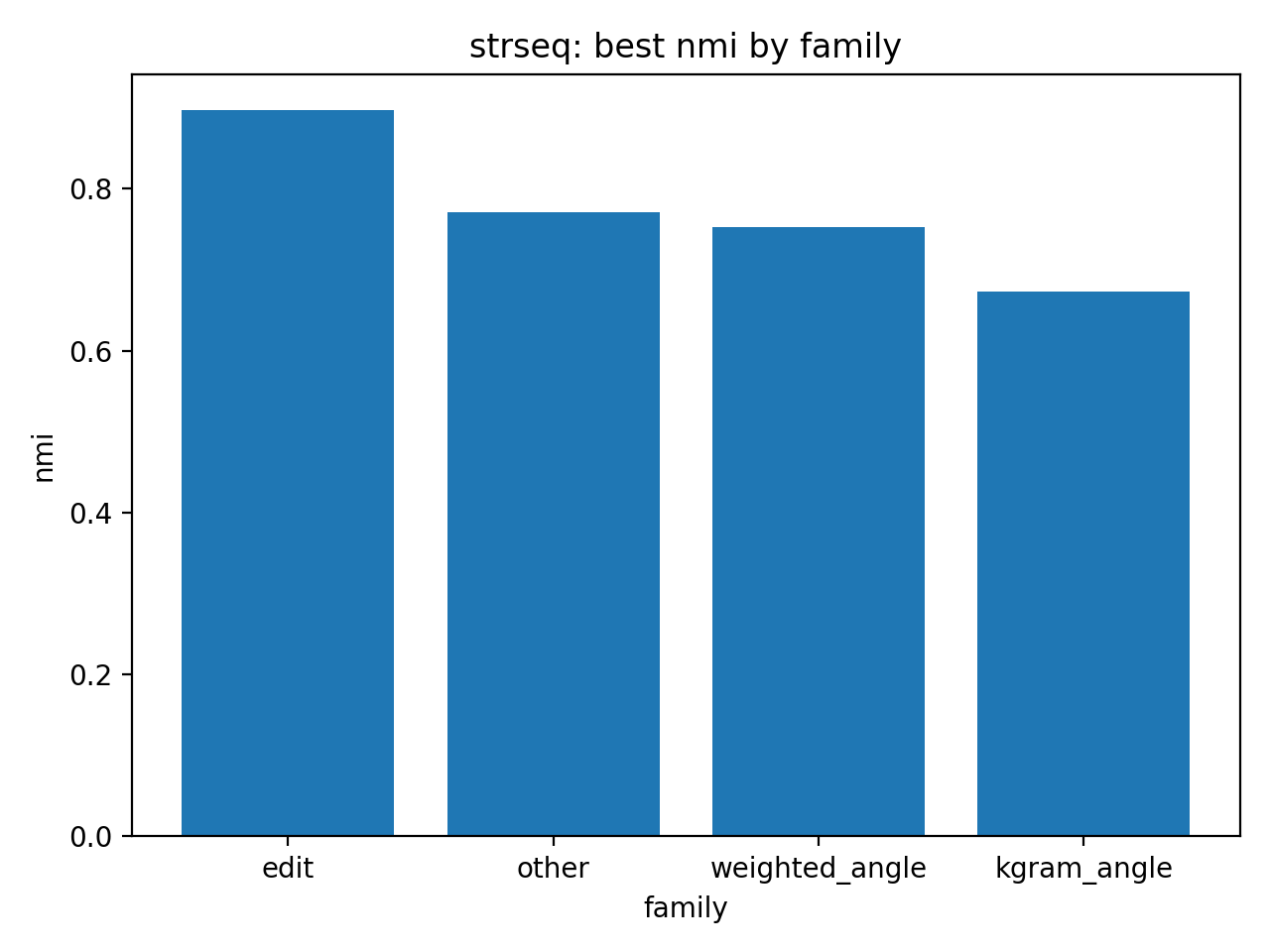}
\caption{Best NMI achieved by each distance family on \texttt{strseq}.}
\label{fig:strseq:best_nmi_family}
\end{figure}

\begin{figure}[t]
\centering
\includegraphics[width=0.85\linewidth]{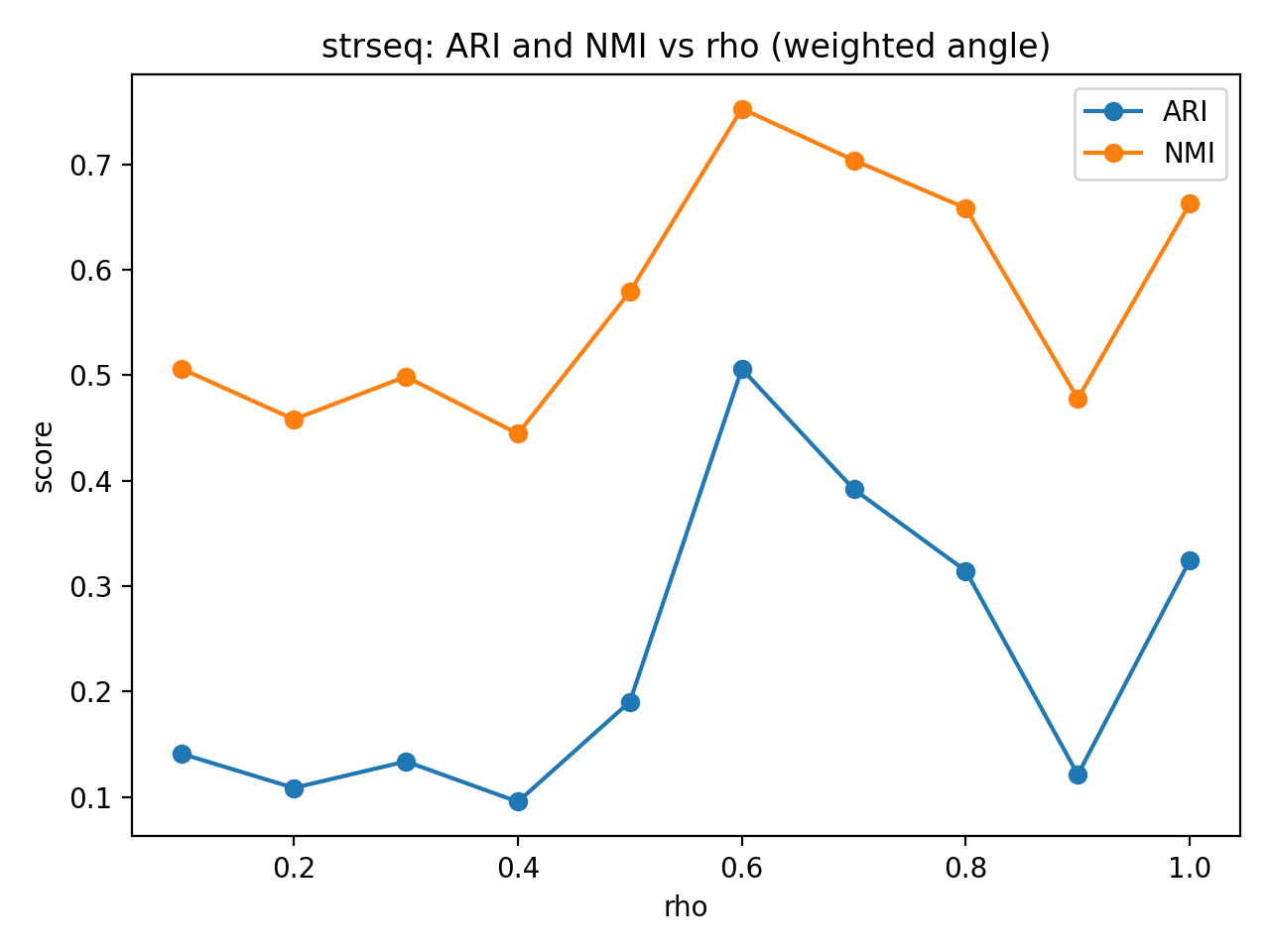}
\caption{ARI and NMI versus $\rho$ for the weighted angle distance on \texttt{strseq}.}
\label{fig:strseq:wa_ari_nmi_rho}
\end{figure}

\begin{figure}[t]
\centering
\includegraphics[width=0.85\linewidth]{figures//ucsc\_trf__best_ari_by_family.png}
\caption{Best ARI achieved by each distance family on \texttt{ucsc\_trf}.}
\label{fig:ucsctrf:best_ari_family}
\end{figure}

\begin{figure}[t]
\centering
\includegraphics[width=0.85\linewidth]{figures//ucsc\_trf__best_nmi_by_family.png}
\caption{Best NMI achieved by each distance family on \texttt{ucsc\_trf}.}
\label{fig:ucsctrf:best_nmi_family}
\end{figure}

\begin{figure}[t]
\centering
\includegraphics[width=0.85\linewidth]{figures//ucsc\_trf__weighted_angle__ari_nmi__vs_rho.png}
\caption{ARI and NMI versus $\rho$ for the weighted angle distance on \texttt{ucsc\_trf}.}
\label{fig:ucsctrf:wa_ari_nmi_rho}
\end{figure}

\begin{figure}[t]
\centering
\includegraphics[width=0.85\linewidth]{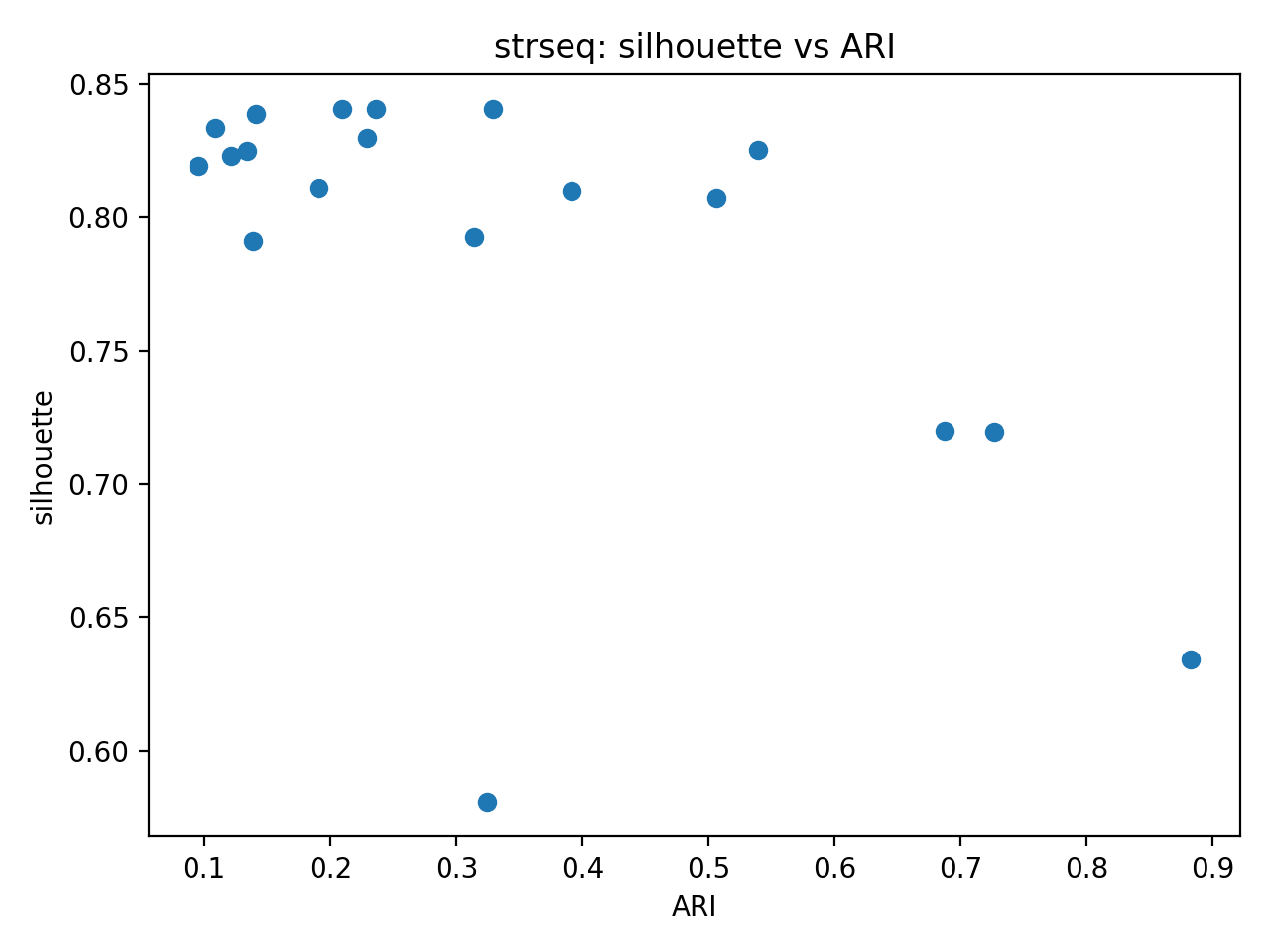}
\caption{Silhouette versus ARI across all tested distances on \texttt{strseq}.}
\label{fig:strseq:sil_vs_ari}
\end{figure}

\begin{figure}[t]
\centering
\includegraphics[width=0.85\linewidth]{figures//ucsc\_trf__silhouette__vs_ari.png}
\caption{Silhouette versus ARI across all tested distances on \texttt{ucsc\_trf}.}
\label{fig:ucsctrf:sil_vs_ari}
\end{figure}

\begin{figure}[t]
\centering
\includegraphics[width=0.85\linewidth]{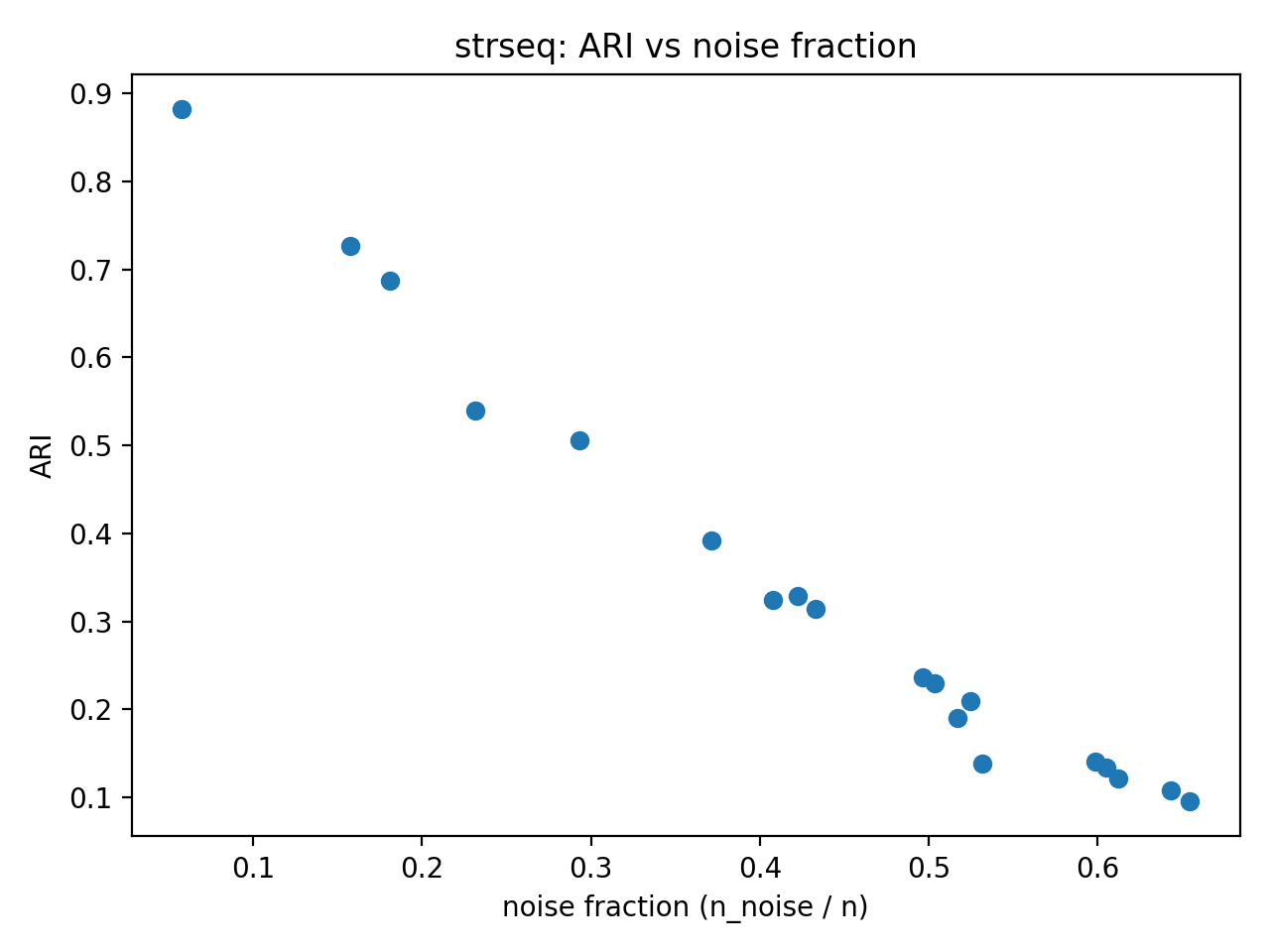}
\caption{ARI versus DBSCAN noise fraction across all tested distances on \texttt{strseq}.}
\label{fig:strseq:ari_vs_noise}
\end{figure}

\begin{figure}[t]
\centering
\includegraphics[width=0.85\linewidth]{figures//ucsc\_trf__ari_vs_noise_frac.png}
\caption{ARI versus DBSCAN noise fraction across all tested distances on \texttt{ucsc\_trf}.}
\label{fig:ucsctrf:ari_vs_noise}
\end{figure}

\appendix

\bibliography{weightedanglebib}{}
\bibliographystyle{amsplain}

\end{document}

%% file: 1_Alphabet_Commands.tex
\newcommand{\bbN}{\mathbb{N}}

\newcommand{\bbR}{\mathbb{R}}

\newcommand{\bbZ}{\mathbb{Z}}

%% file: 2_Math_Commands.tex
\newcommand{\parent}[1]{\left( #1 \right)}

\newcommand{\inv}{^{-1}}

\newcommand{\abs}[1]{\left|#1\right|}
\newcommand{\norm}[1]{\left\lVert #1 \right\rVert}
\newcommand{\Zmod}[1]{\faktor{\bbZ}{n \bbZ}}
\newcommand{\floor}[1]{\left\lfloor #1 \right\rfloor}

\input cyracc.def